\declaretheorem[numberwithin=section, name=Theorem]{thm}
\declaretheorem[sibling=thm, name=Lemma]{lem}
\declaretheorem[sibling=thm, name=Remark] {rk}
\declaretheorem[sibling=thm, name=Definition]{defi}
\declaretheorem[sibling=thm, name=Proposition]{prop}
\definecolor{halfgray}{gray}{0.55}
\definecolor{webgreen}{rgb}{0,.5,0}
\definecolor{webbrown}{rgb}{.6,0,0}
\definecolor{Maroon}{cmyk}{0, 0.87, 0.68, 0.32}
\definecolor{royalblue}{RGB}{0,0,139}
\definecolor{lightblue}{RGB}{150,230,255}
\definecolor{Black}{cmyk}{0, 0, 0, 0}
\definecolor{pinkish}{RGB}{255, 192, 203}
\tikzstyle{n}=[circle,fill=black,draw=black,line width=0.2 pt,minimum size=0.1 cm ]
\numberwithin{equation}{section}
\newcommand{\ER}{Erd\H{o}s-R\'enyi  \ }
\renewcommand{\P}{\mathbb{P}}
\newcommand{\E}{\mathbb{E}}
\newcommand{\dto}{\overset{d}{\to}}
\newcommand{\deq}{\overset{d}{=}}
\newcommand{\pto}{\overset{\P}{\to}}
\newcommand{\Bin}{\mathsf{Bin}}
\newcommand{\Ber}{\mathsf{Ber}}
\newcommand{\Tp}{\Theta_{\scriptscriptstyle\P}}
\newcommand{\Var}{\mathsf{Var}}
\newcommand{\op}{o_{\scriptscriptstyle \P}}
\newcommand{\Op}{O_{\scriptscriptstyle \P}}
\newcommand{\Ball}{\partial \mathcal B}
\newcommand{\cluster}{\mathcal{C}}
\newcommand{\Act}{\mathcal{A}}
\newcommand{\Dead}{\mathcal{D}}
\newcommand{\Opp}{\mathcal{Q}}
\newcommand{\Fil}{\mathscr F}
\newcommand{\Ucal}{\mathcal{U}}
\newcommand{\Wcal}{\mathcal{W}}
\newcommand{\Dex}{\Dead ^+}
\newcommand{\indi}{\mathbbm{1}}
\newcommand{\Ical}{\mathcal{I}}
\newcommand{\Ver}{\mathcal V}
\newcommand{\Ed}{\mathcal E}
\begin{document}

%%%%%%%%% TITLE PAGE %%%%%%%%%%%%

\title[Critical scaling limits of the random intersection graph]
{Critical scaling limits of the random intersection graph}

\begin{abstract}
We analyse the scaling limit of the sizes of the largest components of the Random Intersection Graph $G(n,m,p)$ close to the critical point $p=\frac{1}{\sqrt{nm}}$, when the numbers $n$ of individuals and $m$ of communities have different orders of magnitude. We find out that if $m \gg n$, then the scaling limit is identical to the one of the \ER Random Graph (ERRG), while if $n \gg m$ the critical exponent is similar to that of  Inhomogeneous Random Graphs with heavy-tailed degree distributions, yet the rescaled component sizes have the same limit in distribution as in the ERRG. This suggests the existence of a wide universality class of inhomogeneous random graph models such that in the critical window the largest components have sizes of order $n^{\rho}$ for some $\rho \in (1/2,2/3]$, which depends on some parameter of the graph.
\end{abstract}

\author[L.\ Federico]{Lorenzo Federico}
\email{lorenzo.federico@warwick.ac.uk}
\maketitle
{\noindent \it MSC 2010.} 05C80, 90B15, 82B27.\\
{\it Keywords and phrases.} Random intersection graph, critical networks, scaling limit.\\

\section{Introduction}

\subsection{The model}

In this paper, we study the phase transition of the Random Intersection Graph  (RIG) $G(n,m,p)$, that is, the intersection graph of the random bipartite graph $K_p(n,m)$ obtained by uniform edge percolation on the complete bipartite graph. 
The most common use of Random Intersection Graphs is to model networks that have a community structure, that is, networks in which there exist some sets of vertices (communities) that have many more connections among themselves than with the rest of the graph. This is often the case in social networks, as people tend to form many kinds of communities in real life, for example, people working at the same place, sports clubs, political associations, etc., and usually two people meet each other when they both belong together to some of these social groups. To reflect this structure, the RIG is built from a random bipartite graph, defining the vertices on one side as individuals, and the vertices on the other as communities. When the edge between an individual and a community is present, it means that the individual belongs to that community.
The RIG has as vertex set the set of individuals and every edge between two individuals is present if and only if there exists at least one community that contains both.

The RIG has been very popular as one of the simplest procedures to produce a random graph with clustering, i.e., a model in which vertices that share common neighbours are more likely to be directly connected  (see \cite{BloGodJawKurRyb15, BloJawKur13, DefKet09} for a discussion on the clustering coefficient of different RIGs), without resorting to complicated geometric models, which are typically much harder to analyse.  Clustering is a feature that appears in many real-world networks \cite{GirNew02} and that is completely absent in the classical models, such as the \ER random graph  $G(n,p)$ (ERRG), where all edges are present independently. In recent years there has been great emphasis on the creation of new models that can describe clustered graphs. Many are based on geometric constructions (see e.g. \cite{DeiHofHoo13, KriPapKitVahBog10}) and researchers are investigating whether all graphs with clustering can be expressed in terms of an underlying geometric structure \cite{Kri16}. In other cases the community structure is assigned explicitly (see e.g. \cite{DhaLeuMuk17, HofLeuSte16}). Still, the RIG in its different versions remains a crucial tool to model clustering and community structure in graphs (see \cite{HofKomVad18,KarLeeLes18}).

We now give a formal definition of the Intersection Graph of a bipartite graph $G$:

\begin{defi}[Intersection graph]\label{defi:intersection}
Given a bipartite graph $G$ over the sets of vertices $\Ucal, \Wcal$, we define the vertex and edge sets of the intersection graph $G'$ associated to $G$ as follows:
	\begin{equation}\begin{split}
	\Ver(G')&:=\Ucal,\\
	\Ed(G')&:=\{ \{v_1,v_2\}: \exists w\in  \Wcal \ s.t. \ \{v_1,w\},\{v_2,w\} \in \Ed(G)\}.
	\end{split}\end{equation}

\end{defi}
Note that every bipartite graph has a corresponding intersection graph and every graph can be built as the intersection graph of some bipartite graph, which is typically not unique \cite{ErdGooPos66}.
We define the RIG $G(n,m,p)$, as in \cite{KarSchSin99}, as the intersection graph over the graph $K_p(n,m)$:

\begin{defi}[Construction of the random bipartite $K_p(n,m)$]

We define the random bipartite graph $K_p(n,m)$, for any $n,m \in \mathbb{N}$, $p \in [0,1]$, as the graph obtained as follows:
\begin{itemize}
\item Build the complete bipartite graph $K(n,m)$ over the sets $\Ucal=[n]$, $\Wcal=[m]$.
\item Remove each edge with probability $1-p$ independent of each other.
\end{itemize}
\end{defi}
Note that in the construction of $G(n,m,p)$ all the randomness is related to the realisation of $K_p(n,m)$, and the construction of the intersection graph from a given bipartite graph is a deterministic procedure.

In the literature there exist many more versions of the RIG, built starting from different bipartite random graphs, such as the uniform random intersection graph $G(n,m,d)$, in which all communities have size $d$ and the RIG built from the bipartite configuration model (see \cite{Ryb11} and \cite{HofKomVad18}, respectively, for more details). Moreover, also RIGs in which the communities are not represented by cliques but other, random or deterministic, graphs have been studied, for example in \cite{HofKomVad18, HofKomVad19} and \cite{KarLeeLes18}. We will stick to the study of $G(n,m,p)$, but we have reasons to believe that our results and proof techniques might be valid in many different settings as long as the  sizes of the communities are not too inhomogeneous and they are all internally connected.

\subsection{The phase transition in the Random Intersection Graph}
 Behrisch proved in \cite{beh07} that $G(n,m,p)$ undergoes a phase transition at the critical point $p_c:=\frac{1}{\sqrt{nm}}$, in the limit as $n,m \to \infty$.  The phase transition for the size of the largest component $\cluster_1$ has a different shape depending on the relative scaling of $n$ and $m$:

\begin{thm}[Phase transition in the RIG \cite{beh07}] Fix $\alpha >0$. Consider the graph $G(n,m,p)$. Then in the limit as $n \to \infty$, $m=n^\alpha$, and $p=\frac{\mu}{\sqrt{nm}}$,

\begin{itemize}
	\item  for $\alpha \geq 1$,

	\begin{equation}
	|\cluster_1| =\left\{
	\begin{aligned} 
	&\Tp (\log n)&  \text{ if } \mu<1, \\ 
	&\Tp (n)  &\text{ if } \mu >1.\end{aligned}
\right.
	\end{equation}
	\item  for $\alpha < 1$,

	\begin{equation}\label{eq:superca}
	|\cluster_1| =\left\{
	\begin{aligned} 
	&\Tp (\log m\sqrt{n/m})&  \text{ if } \mu<1, \\ 
	&\Tp (\sqrt{ nm})  &\text{ if } \mu >1.\end{aligned}
\right.
	\end{equation}
\end{itemize}
\end{thm}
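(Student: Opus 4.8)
The plan is to analyse the component of a fixed vertex by a breadth-first exploration performed on the \emph{sparser} side of the bipartite graph $K_p(n,m)$, keeping track of the active, explored and unexplored vertices. Two computations drive everything. In $K_p(n,m)$ an individual lies in $\Bin(m,p)$ communities, each of which contains $\Bin(n,p)$ individuals, so its expected RIG-degree is $nmp^{2}(1+o(1))=\mu^{2}(1+o(1))$, with a limiting offspring law of uniformly bounded variance; hence the RIG behaves, as far as component sizes go, like a near-critical \ER graph with mean degree $\mu^{2}$. Dually, two fixed communities contain a common individual with probability $1-(1-p^{2})^{n}=np^{2}(1+o(1))=\tfrac{\mu^{2}}{m}(1+o(1))$, so the \emph{community graph} $H$ on $[m]$ --- with an edge $\{w_1,w_2\}$ whenever some individual belongs to both --- satisfies $H\deq G(m,n,p)$ and likewise behaves like a near-critical \ER graph with mean degree $\mu^{2}$. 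In particular the critical value is $\mu=1$ in every regime. The strategy is: prove the case $\alpha\ge 1$ by exploring the RIG on $\Ucal$, and then deduce the case $\alpha<1$ by applying the $\alpha\ge1$ result to $H\deq G(m,n,p)$, whose exponent $1/\alpha$ exceeds $1$, and translating back to individuals.

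\textbf{The regime $\alpha\ge1$.} Here $np=\mu\sqrt{n/m}=O(1)$, so communities have $O(1)$ members (and are empty or singletons with probability $1-o(1)$ when $\alpha>1$). When we reveal the RIG-neighbours of an active vertex, the number of newly discovered individuals stochastically dominates a binomial with success probability $(1-\varepsilon)\mu^{2}/n$ on the still-unexplored individuals and is dominated by one with probability $(1+\varepsilon)\mu^{2}/n$, up to an event of vanishing probability coming from community overlaps (whose expected total number stays $o(1)$), for every fixed $\varepsilon>0$. The standard exploration analysis of $G(n,c/n)$ then applies. For $\mu<1$: the exploration from any vertex has an exponentially decaying tail, so a union bound over the $n$ starting vertices gives $|\cluster_1|=\Op(\log n)$, and a second-moment argument shows that with probability $1-o(1)$ some vertex produces a component of size at least $c\log n$ for a fixed $c>0$, whence $|\cluster_1|=\Tp(\log n)$. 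For $\mu>1$: the number of vertices whose exploration reaches size $\ge\log^{2}n$ is $\Tp(n)$, and a sprinkling argument glues all of them into one component, so $|\cluster_1|=\Tp(n)$.

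\textbf{The regime $\alpha<1$.} Here $mp=\mu\sqrt{m/n}=o(1)$, so a typical individual belongs to no community and the structure is carried by $H\deq G(m,n,p)$, whose exponent $1/\alpha$ exceeds $1$; by the case already proved, its largest component $C^{*}$ has $\Tp(\log m)$ communities when $\mu<1$ and $\Tp(m)$ communities when $\mu>1$. Since each RIG-component is the union of the communities forming one component of $H$ (together with isolated individuals), with probability $1-o(1)$ we have $\cluster_1=\bigcup_{w\in C^{*}}w$. Every community has $\Bin(n,p)$ members, of mean $\mu\sqrt{n/m}\to\infty$, and the expected number of individuals belonging to two or more communities of $C^{*}$ is $o\bigl(|C^{*}|\,\mu\sqrt{n/m}\bigr)$ in both phases; hence $|\cluster_1|=\Tp\bigl(|C^{*}|\,\sqrt{n/m}\bigr)$. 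For $\mu<1$ this is $\Tp(\log m\sqrt{n/m})$ --- which dominates the size $(1+\op(1))\mu\sqrt{n/m}$ of the largest single community --- and for $\mu>1$ it is $\Tp\bigl(m\sqrt{n/m}\bigr)=\Tp(\sqrt{nm})$.

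\textbf{Where the difficulty lies.} The comparisons with $G(n,c/n)$ and the combinatorial passage from $H$ to the RIG are routine, and the subcritical tail estimates and the survival probabilities are classical. The two genuinely delicate points are: (i) the supercritical gluing step, i.e.\ the sprinkling argument that merges all the large partial explorations into a single component of size $\Tp(n)$; this cannot be outsourced to a black-box \ER statement in the borderline case $\alpha=1$, where the RIG has a nonvanishing clustering coefficient and genuinely differs from every \ER graph, so the sprinkling must be carried out on the RIG itself while keeping track of the community structure; and (ii) making the overlap bound of the previous paragraph quantitative in the supercritical regime $\alpha<1$, where the exploration of $H$ reaches $\Tp(m)$ communities, so that one must still show the union of their member sets is of the full order $\sqrt{nm}$. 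Everything else follows from the standard machinery for subcritical and supercritical \ER random graphs.
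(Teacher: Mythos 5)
This theorem is quoted in the paper from Behrisch \cite{beh07}; the paper does \emph{not} give its own proof, so there is no in-text argument to compare your proposal against. What can be said is that your outline is broadly consistent with how this kind of result is established: the crucial structural observation --- that the community graph $H$ on $[m]$ has law $G(m,n,p)$, so that the component structure can always be read off from the \emph{smaller} side of $K_p(n,m)$ --- is exactly the symmetry the present paper exploits (Remark~\ref{rk:invert}) and that drives the critical analysis as well. The two reductions you name (an offspring-law comparison with a near-critical \ER graph of mean $\mu^{2}$ when the exploration side has $O(1)$-sized balls, and the passage $H\to G(n,m,p)$ by pulling back communities to individuals) are the right ones, and the magnitudes $\log m\sqrt{n/m}$ and $\sqrt{nm}$ in the $\alpha<1$ regime do come out of exactly the bookkeeping you describe.

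That said, as a proof this is an outline, and the two places you flag as delicate are genuine gaps rather than routine details. For (i), at $\alpha=1$ the offspring law in the exploration is a compound Poisson rather than Poisson, the graph has a nonvanishing clustering coefficient, and a blanket ``stochastic domination by $G(n,(1\pm\varepsilon)\mu^{2}/n)$'' fails for the supercritical gluing step because the dominations are only valid while the explored set is small; you would have to run a sprinkling/two-round exposure argument directly on $K_p(n,m)$ and keep track of how the exposed communities interact. For (ii), the set $C^{*}$ of communities in the giant component of $H$ is \emph{not} independent of the edges from $\Wcal$ to $\Ucal$ (those edges are what built $H$ in the first place), so the overlap bound cannot be obtained by treating the $\Bin(n,p)$ community sizes as fresh randomness conditional on $C^{*}$; one has to argue, for instance, via an upper bound on $\max_{v\in\Ucal}\deg_{K_p(n,m)}(v)$ (which is $O(1)$ w.h.p.\ when $\alpha<1$ since $mp\to 0$ polynomially) together with a concentration bound on $\sum_{w\in C^{*}}\partial B_1(w)$, in the spirit of Lemma~\ref{lem:commsize} of this paper. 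Neither gap is fatal, but both require real work, and a complete proof would look much more like Behrisch's paper than like this sketch.
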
\noindent
 Fill et al. \cite{FilSchSin00} identified the connectivity threshold and the threshold above which $G(n,m,p)$ is a complete graph with high probability (w.h.p.).
\medskip
 
 We study  the scaling limit of the sequence of component sizes, $(|\cluster_i|)_{i \geq 1}$ arranged in decreasing order, when $\alpha<1$, we prove it together with the limit of the sequence of edge-size of the largest components $(|\Ed(\cluster_i)|)_{i \geq 1}$ . We prove these limits in terms of the $\ell^2$-topology and $\ell^2_\searrow$-topology. We define the $\ell^2_\searrow$-topology as the topology induced over the set
\begin{equation}
\ell^2_\searrow=\Big\{ (x_i)_{i\geq 1}: x_i\geq x_{j} \text{ if } i\geq j; \sum_{i=1}^{\infty}x_i^2<\infty\Big\},
\end{equation}
by the $\ell^2$-distance.

We follow a similar approach and aim for similar results as in the work of Aldous \cite{Ald97} about the ERRG. Aldous, defining $\sigma_i$ as the number of surplus edges of the $i$-th largest component, i.e., the number of edges that have to be removed from it to make it a tree, proved the following theorem:

\begin{thm}[Scaling limit for critical ERRG \cite{Ald97}] For the graph $G(n,p)$, when $n \to \infty$, for every $\lambda \in \mathbb R$, $p=n^{-1}(1+\lambda n^{-1/3})$, the following limit holds, for suitable limiting sequences of non-degenerate random variables $\big(\mathbf{C}_i^{\lambda},\boldsymbol{\sigma}_i^{\lambda}\big)_{i\geq 1}$,
	\begin{equation}\label{eq:alfageq}
	\big( n^{-2/3}|\cluster_i|,\sigma_i\big)_{i\geq 1} \dto \big(\mathbf{C}_i^{\lambda},\boldsymbol{\sigma}_i^{\lambda}\big)_{i\geq 1},
	\end{equation}
in the $\ell^2_\searrow\times \mathbb N^\infty$-topology.
\end{thm}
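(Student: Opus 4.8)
The plan is to follow Aldous's approach: encode the components of $G(n,p)$ via a breadth-first exploration walk, establish a functional limit theorem for that walk, and then recover the component sizes and surplus counts from the excursions of the limiting diffusion. Order the vertices $1,\dots,n$ and explore them in breadth-first order, keeping at each step a set of \emph{active} vertices (already discovered but not yet explored); when a vertex is explored it leaves the active set and all of its previously unseen neighbours enter it. If $A_n(k)$ denotes the number of active vertices after the $k$-th step, then $A_n(k)-A_n(k-1)=\Bin(n-s_k,\,p)-1$, where $s_k$ is the number of vertices already seen, and these increments are conditionally independent given the past. By construction the components explored successively correspond to the excursions of $A_n$ above its running minimum, so $|\cluster_i|$ is the length of the $i$-th longest such excursion, and $\sigma_i$ counts, within that excursion, the edges from an explored vertex to a vertex that was already active when it was explored.

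\textbf{Scaling of the walk.} Next I would rescale time by $n^{2/3}$ and space by $n^{1/3}$, setting $\bar A_n(t)=n^{-1/3}A_n(\lfloor n^{2/3}t\rfloor)$. Writing $A_n$ through its Doob decomposition, the martingale part has conditional increment variance $\approx np(1-p)\approx 1$, hence after rescaling converges, by the martingale functional central limit theorem, to a standard Brownian motion $W$. For the predictable part, when $k\approx n^{2/3}t$ one has $s_k = n^{2/3}t+o(n^{2/3})$ (the active set being only of order $n^{1/3}$), so the conditional mean increment is $(n-s_k)p-1 = n^{-1/3}(\lambda-t)+o(n^{-1/3})$, which sums to $n^{1/3}(\lambda t - \tfrac12 t^2)+o(n^{1/3})$. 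Therefore $\bar A_n \dto W^\lambda$ in the Skorokhod topology, where $W^\lambda(t)=W(t)+\lambda t-\tfrac12 t^2$ is Brownian motion with parabolic drift.

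\textbf{From the walk to component sizes and surplus.} The map from a path to the decreasing rearrangement of its excursion lengths above the running minimum is not continuous in general, so I would invoke the special structure of $W^\lambda$: almost surely it has only finitely many excursions longer than any fixed $\varepsilon$, no excursion of infinite length, and records a strictly smaller minimum immediately after each excursion ends. Together with a uniform estimate guaranteeing that with high probability no macroscopic component is explored after time $O(n^{2/3})$ — which follows from the second-moment bound $\E[\sum_i |\cluster_i|^2]=O(n^{4/3})$ in the critical window, the same bound also giving $\ell^2$-tail control — this yields $(n^{-2/3}|\cluster_i|)_{i\ge1}\dto(\mathbf C_i^\lambda)_{i\ge1}$ in $\ell^2_\searrow$, where $\mathbf C_i^\lambda$ is the $i$-th longest excursion length of $W^\lambda$. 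For the surplus, conditionally on the exploration the number of back-edges found when exploring a vertex is $\Bin(A_n(k)-1,p)$, asymptotically independent across steps, so the total accumulated over an excursion converges to a mixed Poisson variable whose random mean is the area under the corresponding rescaled excursion; carrying this along jointly gives $(n^{-2/3}|\cluster_i|,\sigma_i)_{i\ge1}\dto(\mathbf C_i^\lambda,\boldsymbol\sigma_i^\lambda)_{i\ge1}$ in the $\ell^2_\searrow\times\mathbb N^\infty$-topology, with $\boldsymbol\sigma_i^\lambda$ Poisson, given the $i$-th excursion of $W^\lambda$, of mean equal to its area.

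\textbf{Main obstacle.} I expect the genuinely delicate step to be the passage from path-space convergence of $\bar A_n$ to convergence of the ordered excursion lengths in $\ell^2_\searrow$: one must simultaneously control the discontinuity of the excursion-length functional and rule out large components that surface only after $O(n^{2/3})$ exploration steps, which is precisely where the second-moment and tail estimates enter and where most of the work lies.
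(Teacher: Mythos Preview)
The statement you are asked to prove is quoted in the paper as a known result of Aldous \cite{Ald97}; the paper does not supply its own proof but simply invokes it as background before adapting the same exploration-walk / MFCLT / excursion-length methodology to the random intersection graph. Your sketch faithfully reproduces Aldous's original argument (breadth-first walk, Doob decomposition, functional CLT to Brownian motion with parabolic drift, and the Poisson description of the surplus via excursion areas), so there is nothing to compare against in the present paper and your outline is correct as a summary of the cited proof.
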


In this paper we analyse the critical behaviour of the RIG, summarized in the following theorem, which is our main result:

\begin{thm}[Scaling limit for critical RIG]\label{thm:main}For the graph $G(n,m,p)$, the following limits hold when $n \to \infty$, with $m=n^\alpha$, :

\begin{enumerate}[label=(\roman*)]
\item  If $\alpha > 1$, $\lambda \in \mathbb R$, $p=p_c(1+\lambda n^{-1/3})$, then, 
	\begin{equation}\label{eq:alfageq}
	\big( n^{-2/3}|\cluster_i|\big)_{i\geq 1} \dto \big(\mathbf{C}_i^{\lambda}\big)_{i\geq 1},
	\end{equation}
in the $\ell^2_\searrow$-topology.
\item  If $\alpha \in (0,1)$, $\lambda \in \mathbb R$, $p=p_c(1+\lambda m^{-1/3})$, then,
	\begin{equation}\label{eq:alfamin}
	\big( n^{-1/2-\alpha/6}|\cluster_i|, n^{-1+\alpha/3}|\Ed(\cluster_i)|\big)_{i\geq 1} \dto \big(\mathbf{C}_i^{\lambda},\mathbf{C}_i^{\lambda}/2\big)_{i\geq 1},
	\end{equation}
in the $\ell^2_\searrow \times \ell^2$-topology.
\end{enumerate}
The limiting variables $ \big(\mathbf{C}_i^{\lambda}\big)_{i\geq 1}$ are the same as the ones for the component sizes of a critical ERRG, with $p=\frac{1+2\lambda n^{-1/3}}{n}$ given in \cite{Ald97}.
\end{thm}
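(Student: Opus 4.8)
The plan is to run a breadth-first exploration of the underlying bipartite graph $K_p(n,m)$ and to read the component sizes off an associated integer-valued random walk, in the spirit of Aldous \cite{Ald97}. For part (i) ($\alpha>1$) I would explore the individuals directly, while for part (ii) ($\alpha\in(0,1)$) I would first explore the communities and then transfer; in both regimes the exploration walk has the same asymptotic mean and variance as the breadth-first walk of a critical \ER graph, which forces the \ER scaling limit. Throughout I use that the total number of edges of $K_p(n,m)$ is $\Op(\sqrt{nm})$.

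\emph{Part (i), $\alpha>1$.} Process one \emph{active individual} at a time: on processing $v$, reveal every community containing $v$ not yet discovered, and then, inside each such community, every individual not yet discovered, which becomes active. Let $Z_t$ be the number of active individuals after $t$ individuals have been processed; the RIG-component sizes $|\cluster_i|$ are then the lengths of the excursions of $Z$ above its running minimum. Conditionally on the past, the number of individuals discovered at step $t$ is a $\Bin(m-D_{t-1},p)$-indexed sum of (essentially independent) $\Bin(n-O(t),p)$'s, where $D_{t-1}$ is the number of communities discovered so far; one has $D_{t-1}=\op(m)$ for all $t$, and in fact $D_{t-1}=\op(mn^{-1/3})$ while $t\asymp n^{2/3}$, so community-depletion does not affect the leading order. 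Hence the conditional mean and variance of the $t$-th increment of $Z$ are
\begin{equation}
mnp^2\Big(1-\tfrac tn\Big)-1+\op(n^{-1/3})=n^{-1/3}\big(2\lambda-tn^{-2/3}\big)+\op(n^{-1/3}),\qquad mnp^2(1+np)+\op(1)=1+\op(1),
\end{equation}
the last equality because $np=\sqrt{n/m}\to0$. These are exactly the asymptotics for the increments of the breadth-first walk of $G\big(n,\tfrac{1+2\lambda n^{-1/3}}{n}\big)$, so a martingale functional CLT gives $n^{-1/3}Z_{\lfloor sn^{2/3}\rfloor}\to B_s+2\lambda s-s^2/2$ on path space, and I would follow Aldous's passage from this weak limit to $\ell^2_\searrow$-convergence of $\big(n^{-2/3}|\cluster_i|\big)_{i\geq1}$. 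The RIG-specific inputs there are (a) that no component has size $\gg n^{2/3}$ and (b) the tail bound $\sum_{i\geq K}\big(n^{-2/3}|\cluster_i|\big)^2=\op(1)$ as $K\to\infty$, both of which follow from moment bounds on $|\cluster(v)|$ obtained by dominating the exploration increments by a fixed near-critical (light-tailed, as $np\to0$) offspring law.

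\emph{Part (ii), $\alpha\in(0,1)$.} The key observation is that the auxiliary graph $H$ on vertex set $[m]$ in which two communities are adjacent iff they share an individual is itself a random intersection graph: transposing the two sides of $K_p(n,m)$ shows $H\deq G(m,n,p)$, the RIG with individuals and communities interchanged. Since $\alpha<1$, in $H$ the number of ``individuals'' ($=m$) is of smaller order than the number of ``communities'' ($=n$), its critical point is $1/\sqrt{mn}=p_c$, and $p=p_c(1+\lambda m^{-1/3})$ is precisely the window of part (i) for $H$; hence part (i) yields $m^{-2/3}|\cluster_i(H)|\dto\mathbf{C}_i^{\lambda}$ in $\ell^2_\searrow$. (Equivalently one runs the same bipartite exploration but counts discovered \emph{communities}: as $mp=\sqrt{m/n}\to0$, that walk has increment mean $m^{-1/3}(2\lambda-tm^{-2/3})+\op(m^{-1/3})$ and variance $1+\op(1)$ at the scale $t\asymp m^{2/3}$.) It remains to transfer to the RIG. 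Components of the RIG carrying at least one edge correspond bijectively to components of $H$: an $H$-component with community-set $S$, $|S|=k$, gives the RIG-component on $\bigcup_{w\in S}w$, whose edge-set is, up to a negligible overlap, the disjoint union of the cliques on the $w\in S$. Now $|w|\sim\Bin(n,p)$ has mean $np=\sqrt{n/m}\,(1+o(1))$ and relative fluctuation $(n/m)^{-1/4}\to0$, conditioning a community to lie in a prescribed $H$-component biases its size only at lower order, and $\big|\bigcup_{w\in S}w\big|=\sum_{w\in S}|w|-\#\{\text{individuals in}\ \geq2\ \text{communities of }S\}$ with the correction at most the number of $H$-edges inside $S$, which is $\Op(k)$ for a critical component. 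Together with the fact that no two communities in a critical component share two individuals (total probability $\Op(k^2/m^2)=\op(1)$), concentration of the sum of the $k\to\infty$ nearly i.i.d.\ sizes gives, uniformly in $i$, $|\cluster_i|=\sqrt{n/m}\,|\cluster_i(H)|(1+\op(1))$ and $|\Ed(\cluster_i)|=\tfrac12\,\tfrac nm\,|\cluster_i(H)|(1+\op(1))=\tfrac12\sqrt{n/m}\,|\cluster_i|(1+\op(1))$. Since $n^{-1/2-\alpha/6}\sqrt{n/m}=m^{-2/3}$, substituting $m^{-2/3}|\cluster_i(H)|\dto\mathbf{C}_i^{\lambda}$ gives \eqref{eq:alfamin}; the same estimate also shows that RIG-components not arising from a macroscopic $H$-component (a single community contributes only $\Op(\sqrt{n/m})=\op(n^{1/2+\alpha/6})$ individuals) cannot enter the largest ones.

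The main obstacle is the step from walk convergence to $\ell^2_\searrow$-convergence of the ordered component sizes — the part of Aldous's argument that is not a soft consequence of the CLT — which forces one to re-establish the second-moment / $\ell^2$-tail control of the component-size sequence directly for the RIG exploration; in part (ii) there is the additional delicate point of making the community-to-individual comparison uniform over the random family of macroscopic components while controlling the size-bias induced by conditioning on component membership.
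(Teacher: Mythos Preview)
Your proposal is correct and follows essentially the same route as the paper: a two-step exploration of $K_p(n,m)$, a martingale functional CLT for the associated walk giving Brownian motion with parabolic drift, and then Aldous's passage to $\ell^2_\searrow$; for $\alpha<1$ the paper likewise exploits the $n\leftrightarrow m$ symmetry (your dual graph $H=G(m,n,p)$ is exactly the paper's Remark on symmetry), applies the $\alpha>1$ result on the community side, and then transfers. The one genuine methodological difference is in that transfer: you propose a direct concentration argument on $\sum_{w\in S}|w|$ over a prescribed $H$-component $S$, which obliges you to control the size-bias on $|w|$ induced by conditioning $w\in S$ and to make the estimate uniform over the random family of macroscopic components; the paper instead tracks the process $Q(k)$ counting individuals discovered while exploring from the community side and proves a single uniform law of large numbers $\sup_{t\le T}|Q(tm^{2/3})/(n^{1/2}m^{1/6})-t|\to 0$ (and an analogous one for the edge count), which sidesteps the size-bias issue entirely and delivers the uniformity for free. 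Both work, but the process-level route is cleaner and is also what feeds directly into the $\ell^2$-tail control (combined with a uniform bound on the maximum community size), which is precisely the obstacle you flag at the end.
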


Similarly, we also obtain the critical scaling limit of the random bipartite graph $K_p(n,m)$, which is interesting in its own right. In this case, each component contains vertices belonging to both sides of $K_p(n,m)$, not only to the side labelled as individuals, but the leading contribution always comes from the number vertices on the larger side.
 
 \begin{thm}
 [Scaling limit for critical random bipartite graph]\label{thm:main2}Consider $K_p(n,m)$, with $p=p_c(1+\lambda n^{-1/3})$. Define for every $i \geq 1$, $\cluster_i$ as the $i$th largest component, then in the limit as $n \to \infty$, $m=n^\alpha$, $\alpha > 1$, 

	\begin{equation}\label{eq:bipa}
	\big( n^{-1/6-\alpha/2}|\cluster_i|\big)_{i\geq 1} \dto \big(\mathbf{C}_i^{\lambda}\big)_{i\geq 1},
	\end{equation}
	in the $\ell^2_\searrow$-topology, where the limiting variables $ \big(\mathbf{C}_i^{\lambda}\big)_{i\geq 1}$ are the same as the ones for the critical ERRG with $p=\frac{1+2\lambda n^{-1/3}}{n}$ given in \cite{Ald97}.

 \end{thm}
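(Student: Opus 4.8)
The plan is to deduce Theorem~\ref{thm:main2} from part~(i) of Theorem~\ref{thm:main}. Write $N(w)$ for the neighbourhood of a community $w$ in $K_p(n,m)$. The starting point is a structural observation: the components of $K_p(n,m)$ containing at least one individual are in bijection with the components of the intersection graph $G(n,m,p)$, the correspondence sending a RIG-component to the set of its individuals together with every community adjacent to that set --- and since a RIG-component has no straddling community, ``adjacent to the set'' here is the same as ``contained in the set''. The remaining components of $K_p(n,m)$ are the isolated communities; their number is $\Bin(m,(1-p)^n)$, of mean $m(1-p)^n=m(1+o(1))$ because $np=n^{(1-\alpha)/2}(1+o(1))\to0$, so it is $\Op(n^\alpha)$, and each such component has size $1$, contributing only $\Op(n^{-1/3})$ in $\ell^2$ after rescaling by $n^{-1/6-\alpha/2}$; these are discarded. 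Let $I_j$ be the individual set of the $j$-th largest RIG-component and $W_j:=\#\{w\in[m]:N(w)\cap I_j\neq\varnothing\}$ the number of communities attached to it; then the image of that component under the bijection has size $|I_j|+W_j$, while $|I_j|$ is exactly the size of the $j$-th largest component of $G(n,m,p)$. Since $mp=n^{(\alpha-1)/2}(1+o(1))\to\infty$ and, crucially, $\alpha>1$ forces $mp\cdot n^{2/3}=n^{1/6+\alpha/2}\gg n^{2/3}$, it suffices to show that $W_j=mp\,|I_j|$ up to an error negligible in $\ell^2$ after rescaling, the individual contribution then being negligible for the same reason.

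I would obtain the key estimate by tracking the community count along a breadth-first exploration of $K_p(n,m)$: processing an active individual $u$ reveals, among its $\sim\Bin(m,p)$ bipartite-neighbours, exactly those communities still undiscovered, hence $\Bin(m-D,p)$ of them, where $D$ is the number of communities discovered so far. Throughout the exploration $D$ stays below the total number of non-isolated communities, which is $\Op(mpn)=\Op(n^{(1+\alpha)/2})=\op(m)$ --- again because $\alpha>1$ --- so each individual-step contributes $\Bin(m,p)$ fresh communities up to a multiplicative factor $1+\op(1)$ that is uniform over all steps, independently of the remaining randomness. Summing over the $|I_j|$ individual-steps that exhaust the $j$-th component, and writing $\mathscr F_j$ for the exploration history up to that component, one gets $\E[W_j\mid\mathscr F_j]=mp\,|I_j|+\Op(|I_j|)$ and $\Var(W_j\mid\mathscr F_j)=\Op(mp\,|I_j|)$, uniformly in $j$. (Alternatively one may condition on the RIG-component partition of $[n]$, under which $(W_j)_j$ is of multinomial type with essentially these first two moments, the mild bias coming from forbidding straddling communities and from requiring each part to be internally connected being harmless since $pn\to0$.)

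It remains to push these bounds through. By Theorem~\ref{thm:main}(i), $n^{-4/3}\sum_j|I_j|^2=\Op(1)$, while $\sum_j|I_j|=n$; with $R_j:=W_j-mp\,|I_j|$,
\begin{equation}
\sum_j R_j^2\;\le\;2\sum_j\bigl(\E[W_j\mid\mathscr F_j]-mp\,|I_j|\bigr)^{2}+2\sum_j\bigl(W_j-\E[W_j\mid\mathscr F_j]\bigr)^{2},
\end{equation}
whose first term is $\Op(\sum_j|I_j|^2)=\Op(n^{4/3})$ and whose second has expectation $\Op(\sum_j mp\,|I_j|)=\Op(mpn)=\Op(n^{(1+\alpha)/2})$. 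Multiplying by $n^{-1/3-\alpha}$, the first contribution is $\Op(n^{1-\alpha})\to0$ --- the one place $\alpha>1$ is used decisively --- and the second is $\op(1)$; likewise $n^{-1/6-\alpha/2}\bigl(\sum_j|I_j|^2\bigr)^{1/2}=\Op(n^{1/2-\alpha/2})\pto0$. Since $n^{-1/6-\alpha/2}mp=n^{-2/3}(1+o(1))$, this gives, with components indexed by RIG-rank, $n^{-1/6-\alpha/2}(|I_j|+W_j)=(1+o(1))\,n^{-2/3}|I_j|+\varepsilon_j$ with $\|(\varepsilon_j)_j\|_{\ell^2}\pto0$. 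Re-indexing by the true size-ranking of $K_p(n,m)$ costs at most $\|(\varepsilon_j)_j\|_{\ell^2}$ in $\ell^2$-distance, as the decreasing rearrangement is $1$-Lipschitz on $\ell^2$; together with Theorem~\ref{thm:main}(i) and a Slutsky-type argument (the factor $1+o(1)$ and the matching of $p_c(1+\lambda n^{-1/3})$ with the ERRG parameter $(1+2\lambda n^{-1/3})/n$ being absorbed as usual), this yields $(n^{-1/6-\alpha/2}|\cluster_i|)_{i\ge1}\dto(\mathbf C_i^\lambda)_{i\ge1}$ in the $\ell^2_\searrow$-topology.

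I expect the main obstacle to be the second step: making rigorous, uniformly over the entire exploration, that each individual-step contributes $\Bin(m,p)$ fresh communities up to a uniform $1+\op(1)$ factor --- equivalently, quantifying the bias induced by conditioning on the RIG-component structure --- and doing so with enough slack that the errors, summed over \emph{all} components including the $\Op(n)$ microscopic ones, vanish in $\ell^2$ rather than merely in finite-dimensional distribution. The exponent bookkeeping closes with room to spare precisely when $\alpha>1$; at $\alpha=1$ the two contributions $|I_j|$ and $W_j/(mp)$ are both of order $n^{2/3}$, so the scaling limit of $|\cluster_i|$ would be a correlated superposition of the two, which is why the borderline case is excluded.
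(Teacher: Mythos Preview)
Your argument is correct and takes a genuinely different route from the paper. The paper does \emph{not} deduce Theorem~\ref{thm:main2} from Theorem~\ref{thm:main}(i); instead it proves both in parallel from the same ingredients: the Brownian limit of the two-step exploration started from $\Ucal$ (Theorem~\ref{the:brownian}) and the uniform concentration $\sup_{t\le T}\bigl|Q(tn^{2/3})/(m^{1/2}n^{1/6})-t\bigr|\pto0$ of the cumulative community-count process (Lemma~\ref{lem:concopp}). From the joint convergence $(\overline R_n^\lambda,\overline Q_n)\dto(\overline R^\lambda,\mathrm{id})$ via Slutsky, the bipartite component sizes $Q(T_N)-Q(T_{N-1})+(T_N-T_{N-1})$ are read off as $m^{1/2}n^{1/6}$ times the excursion lengths, and the $\ell^2_\searrow$-tightness tail is handled with the crude bound $|\cluster_i|\le \tilde T_i\bigl(1+\max_{v\in\Ucal}\partial B_1(v)\bigr)$ together with Lemma~\ref{lem:commsize}. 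Your route is more modular: you take the RIG scaling limit as a black box and transfer it to $K_p(n,m)$ by showing $W_j=mp\,|I_j|+R_j$ with $\|(R_j)\|_{\ell^2}=\op(m^{1/2}n^{1/6})$ directly, then invoke the $1$-Lipschitz property of the decreasing rearrangement on $\ell^2$. What your approach buys is that the $\ell^2$-tail control falls out of the same moment computation that gives the finite-dimensional convergence, so you avoid Lemma~\ref{lem:commsize}; what the paper's approach buys is that the uniform-in-$t$ statement of Lemma~\ref{lem:concopp} is cleaner to state and is reused elsewhere (for $\alpha<1$ via Remark~\ref{rk:invert}). At bottom both arguments need the same estimate: the fresh-community increments are $\Bin(m-Q(k-1),p)$ with $Q(k-1)=\op(m)$ throughout.

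Two small points of care. First, your conditioning on ``the exploration history up to that component $\mathscr F_j$'' is ambiguous: if $\mathscr F_j$ includes the completion of component $j$ then $W_j$ is measurable and the variance is zero, while if it only includes the start then $|I_j|$ is not measurable. The clean way is exactly what you sketch: work step-by-step with the martingale differences $\Delta_k-\E[\Delta_k\mid\mathscr F_{k-1}]$, note that for the nested stopping times $T_0<T_1<\cdots$ one has $\E\bigl[\sum_j\bigl(\sum_{T_{j-1}<k\le T_j}M_k\bigr)^2\bigr]=\E\bigl[\sum_k\E[M_k^2\mid\mathscr F_{k-1}]\bigr]\le n\cdot mp$ by orthogonality of martingale increments over disjoint stopping intervals, while the drift part is bounded by $|I_j|\cdot Q_{\max}\cdot p\le|I_j|\,mnp^2=|I_j|(1+o(1))$ as you say. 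Second, your parenthetical alternative of conditioning on the RIG-component partition is the riskier path: that conditioning fixes, for every community, exactly which block of the partition contains all of its individual neighbours, so the $(W_j)_j$ are far from multinomial under it; the ``mild bias'' is real and the exploration argument is what actually delivers the bound. Stick with the exploration.
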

We discuss the case $\alpha <1$ only for the RIG and not for the random bipartite graph, because in the graph $G(n,m,p)$ the sets of individuals and communities play very different roles, while in $K_p(n,m)$ the roles of $n$ and $m$ are exchangeable, since $K(n,m)$ is isomorphic to $K(m,n)$ and thus we can always without loss of generality assume that $n \leq m$.  It is worth noting that the critical exponent for component sizes in the RIG  may be different from the one for the ERRG, but the limiting variables are the same, with just a rescaling by a factor $2$ in the parametrization of the critical window. This factor $2$ comes from the binomial expansion 
\begin{equation}
p^2=p_c^2(1+\lambda n^{-1/3})^2=p_c^2(1+2\lambda n^{-1/3}+\lambda^2 n^{-2/3}),
\end{equation}
since the critical point, is determined by the average number of vertices that are at distance $2$ from a randomly chosen vertex in $K_p(n,m)$. We show this in more detail in Chapter \ref{sec:explrel}, when we design an exploration algorithm that inspects at every step the $2$-neighbourhood of a given vertex and use it as a key tool to prove our main theorems.

It is also relevant to note that in Theorem \ref{thm:main} (2) we talk about total number of edges instead of surplus edges (as it is common in most of the literature about scaling limits of critical random graphs, see, e.g \cite{Ald97, DhaHofLeeSen16a, BhaHofLee10a}) as in this case the number of edges is of higher order of magnitude compared to the number of vertices in a typical large critical component, and consequently, the number of  surplus  edges is very close to the total number of edges. Also, here we obtain $\ell^2$-convergence for the (rescaled) number of surplus edges, which does not hold for any of the already studied cases.

 The results about component sizes in Theorems \ref{thm:main} and \ref{thm:main2} are strongly linked, since the connected components of $G(n,m,p)$ are just the restrictions of the components of $K_p(n,m)$ to the set $\Ucal$.

% Note that  in the case $\alpha=1$ $K_p(n,m)$  falls into the class analysed in \cite{BhaBroSenWan14} and thus the scaling limit is already known. In such case 
%
%	\begin{equation}
%	\big( n^{-2/3}|\cluster_i^\lambda|\big)_{i\geq 1} \dto \big(\mathbf{C}_i^{\lambda}\big),
%	\end{equation}
%\todoin{Reference might be wrong}
%	but the limit variables are different from the ones of the ERRG.
 Moreover, Fill et al. \cite{FilSchSin00} have proved that if $\alpha > 6 $, then $G(n,m,p)$ converges in total variation distance to an ERRG.
\noindent
	\subsection*{ Notation.}
All limits in this paper are taken as $n\to \infty$ unless stated otherwise. Since most of them are joint limits in $n$ and $m$, $m$ will be specified as a function of $n$ when the precise relation is relevant. 
For asymptotic statements we use the following notation:
\begin{itemize}
\item Given a sequence of events $(\mathcal{A}_n)_{n \geq 1}$ we say that $\Act_n$ happens \emph{with high probability (w.h.p.)} if $\P(\mathcal{A}_n) \to 1$.
\item Given the random variables $(X_n)_{n \geq 1}, X$, we write $X_n \dto X$ and $X_n \pto X$ to denote convergence in distribution and in probability, respectively.
\item For sequences of (possibly degenerate) random variables  $(X_n)_{n \geq 1}$, $(Y_n)_{n \geq 1}$, we write $X_n=O(Y_n)$ if the sequence $(X_n/Y_n)_{n \geq 1}$ is bounded almost surely; $X_n=o(Y_n)$ if $X_n/Y_n \to 0$ almost surely; $X_n =\Theta(Y_n)$ if $X_n=O(Y_n)$ and $Y_n=O(X_n)$ .
\item Similarly, for sequences $(X_n)_{n \geq 1}$, $(Y_n)_{n \geq 1}$ of (possibly degenerate) random variables, we write $X_n=\Op(Y_n)$ if the sequence $(X_n/Y_n)_{n \geq 1}$ is tight; $X_n=\op(Y_n)$ if $X_n/ Y_n \pto 0$; and $X_n =\Tp(Y_n)$ if $X_n=\Op(Y_n)$ and $Y_n=\Op(X_n)$.  
\end{itemize}

We write $\Ber(p)$ for a Bernoulli random variable with success probability $p$ and $\Bin(n,p)$ for a binomial random variable with $n$ independent trials and success probability $p$.
Given two random variables $A$ and $B$ we write that $A\succeq B$ if, for all $x \in \mathbb R$, $\P (A\geq x) \geq \P(B\geq x)$, and $A \overset{d}{=}B$ if, for all $x \in \mathbb R$, $\P (A\geq x) = \P(B\geq x)$.
We use calligraphic letters to denote sets and capital letters to denote their cardinalities (for example $D$ is the cardinality of $\Dead$).

For any, deterministic or random, graph $G$ we write $\Ed(G)$ to denote its edge set, $\Ver (G)$ to denote its vertex set and $\cluster_G(v)$ for the connected component of the vertex $v$ in $G$. Often we will use the convention that $|G|=|\Ver(G)|$.

We will use the standard abbreviation i.i.d. for independent identically distributed and a.s. for almost surely.

Given a real-valued random variable $X$ and a $\sigma$-algebra $\Fil$, we define $(X\mid \Fil)$ as the conditional distribution of $X$ with respect to $\Fil$. Note that $(X\mid \Fil)$ is a random variable which takes values in the space of real-valued random variables and that it is measurable with respect to $\Fil$.

\subsection{Comparison with other models}

The main result that emerges from this paper is that the size of the largest clusters of the RIG at criticality is in the universality class of the critical ERRG \cite{Ald97} if $\alpha > 1$, while it is in a different one if $\alpha <1$. We can compare the behaviour of the RIG with many other inhomogeneous random graph models, whose critical phase has been studied in detail, starting with the work of Aldous and Limic \cite{AldLim98} We notice a strong analogy with the behaviour of the rank-1 inhomogeneous random graphs (IRG) \cite{BhaBroSenWan14,BhaHofLee10a,BhaHofLee12,BroDuqWan18} and the configuration model (CM) \cite{DhaHofLeeSen16b,DhaHofLeeSen16a} with power-law degree distributions. These models show a critical behaviour that depends on the precise choice of the exponent $\tau$ of the power-law in a similar way as the critical behaviour of the RIG depends on the exponent $\alpha$.

We define the critical exponent $\rho $ as the limit in probability of $\log|\cluster_1|/\log n$.
Both for the IRG and the RIG there exists a region for the parameters ($\tau >4$ and $\alpha >1$ respectively) in which the phase transition has the same critical exponent $\rho = 2/3$ as the ERRG, which is considered the mean-field model for random graphs, as it is the one with the highest symmetry and homogeneity. Instead for $\tau \in (3,4)$ and $\alpha \in (0,1)$ we have a non-mean-field critical exponent $\rho\in (1/2,2/3)$, and, finally, we see that for $\tau<3$ and $\alpha =0$, respectively, there is no phase transition.

It is important to notice that the structure of the IRG and the RIG are very different, since in the IRG a few \emph{hubs} (the vertices with the largest degrees, of order $n^{\frac{1}{\tau -1}}$) are the ones that determine the structure of the random graph, with the other vertices playing a minor role, while in the RIG all the communities have size $\Tp (n^{\frac{1-\alpha}{2}})$ and the source of inhomogeneity is the existence of two different kinds of vertices in $K_p(n,m)$. This difference is seen in the variables to which critical component sizes converge after rescaling. The results in \cite{BroDuqWan18} by Broutin et al. about the IRG do not require the assumption that the degree distribution converges to a power law. Their result imply that it would be possible to generate versions of the IRG (and maybe the CM) with non-mean-field critical exponent but the same limit variables as in the ERRG and the RIG. This could be achieved imposing the existence of very few vertices of very high degree and a vast majority of vertices of very low degree, with no middle ground, as it happens in $K_p(n,m)$. These observations suggest the existence of a bigger class of inhomogeneous random graph models such that $\rho= 2/3$ when the inhomogeneity is not too strong, and $\rho\in (1/2,2/3)$ when the graph is highly inhomogeneous, and it would be interesting to analyse even more inhomogeneous models to investigate whether this phenomenon is indeed universal.
\bigskip
\begin{center}
\begin{tabular}{| l | c | c | }
\hline   & IRG & RIG  \\
 \hline $\rho =2/3$ & $\tau > 4$ & $\alpha > 1$ \\
  $1/2 < \rho < 2/3$ & $3<\tau<4$ & $0<\alpha<1$ \\
  No phase transition & $\tau <3$ & $\alpha =0$\\
  \hline
\end{tabular}
\end{center}

Another analogy we see with IRG and CM is that as the inhomogeneity of the graph increases (i.e., as $\tau$ and $\alpha$ get smaller) the largest subcritical components get bigger, compare for example the results of Janson about the CM \cite{Jan08} and of Behrisch about the RIG \cite{beh07}. This has a clear interpretation as a consequence of the increase in the maximal degree, which is $\Tp(n^{\frac{1}{\tau -1}})$ in the IRG and CM and $\Tp (n^{\frac{1-\alpha}{2}})$ in the RIG, and corresponds, up to respectively a constant and a logarithmic term, to the size of the largest subcritical component. 
On the other hand, in the supercritical phase we see that in the IRG and CM the largest supercritical component has size $\Tp (n)$ for all $\tau$, while in the RIG it depends on the choice of $\alpha$, as we have seen in \eqref{eq:superca}. 

For the IRG and the CM also  the scaling limit of the large critical components seen as metric spaces have been investigated (see \cite{AddBroGol12,BhaSenWan17,BhaDhaHofSen19}), and in particular, it is known that the typical distances between vertices inside a large critical connected component scale as $n^{\eta}$, with $\eta = (\tau-3)/(\tau -1)$, which means that they get smaller as the inhomogeneity increases. It is reasonable to expect a similar behaviour in the RIG as $\alpha \to 0$, since when $\alpha =0$ (i.e. when $m=1$ for all $n$), the largest component  is a clique, which has diameter $1$.

Note that if we choose $\alpha=1$, i.e. $m=n$, $K_p(n,n)$  is a special case of the stochastic block model, whose critical scaling limit has been studied by Bhamidi et al. \cite{BhaBroSenWan14} in quite broad generality. Unfortunately, due to the requirement that edges between vertices on the same side of $K_p(n,n)$ are not present deterministically, the results from \cite{BhaBroSenWan14} do not apply directly to $K_p(n,n)$ as, for technical reasons, Bhamidi et al. required that all the edge probabilities were non-zero. We think that the problem of the stochastic block model with some deterministically vacant edges can be solved in much broader generality than just the special case $K_p(n.n)$ with techniques from \cite{BhaBroSenWan14}.

	\subsection{Outline of the proof}
The proof of the main theorems of this paper is carried out using the graph exploration argument invented originally by Aldous in \cite{Ald97} to study the critical ERRG. 

In Section \ref{sec:explrel} we design a two-step exploration process on the graph $K_p(n,m)$ which is taylored to the analysis of the corresponding RIG. In particular, in Section \ref{sec:expproc} we describe the exploration process and all the sets related to it, in Section \ref{sec:adaproc} we define the stochastic process $(S_n^\lambda(k))_{k \geq 1}$ that keeps track of the number of active vertices in the exploration, and explain how we use the symmetry of $K(n,m)$ to run the exploration starting always from the smaller side, even when it is the community side. This approach might seem counterintuitive, as the vertex set of $G(n,m,p)$ contains only the individuals, but makes the analysis of the process much smoother. In Section \ref{sec:brownian}, we prove the main theorems assuming the convergence of a rescaled version of this process to a Brownian motion with parabolic drift, which is proved later in Section \ref{sec:mclt}. 

At the beginning of Section \ref{sec:mclt} we state the Martingale Functional Central Limit Theorem (MFCLT), closely adapting its formulation given in \cite{FedHofHolHul16a}, and then we prove that $(S_n^\lambda(k))_{k \geq 1}$ satisfies the conditions required to apply this MFCLT (Sections \ref{sec:continuity}, \ref{sec:parabolic} and \ref{sec:variance}) and thus $(S_n^\lambda(k))_{k \geq 1}$ converges to a Brownian motion after appropriate rescaling and recentering. 

In Section \ref{sec:edge} we study the process that counts the number of edges explored, which is necessary to prove the joint scaling limit in \eqref{eq:alfamin}.

\section{Exploration of the graph and related objects}
\label{sec:explrel}
In this paper, as in many other works whose goal is to prove a scaling limit for critical random graphs, we will follow the approach invented by Aldous in \cite{Ald97} to study the critical ERRG. We define an appropriate \emph{exploration algorithm} on $K_p(n,m)$ in which the vertices of a component are explored sequentially, starting from a random vertex, and then we analyse the process that counts the number of active vertices (i.e., vertices that have been found by the process but whose neighbourhoods have not been inspected yet). We prove that an appropriately rescaled version of such process converges in distribution to a Brownian motion. In the rest of this section, we will make this precise.

\subsection{The exploration process}
\label{sec:expproc}

In this section we define a process that explores the graph $K_p(n,m)$ keeping track of the information that is relevant to compute the number of vertices and edges in each connected component. This is a standard tool in most proofs of critical scaling limits of random graphs, and we will follow a similar approach to the one used in \cite{FedHofHolHul16a}. Since the important object of interest is the RIG associated to $K_p(n,m)$, this will be a two-step exploration, in the sense that at each step we look for the vertices that are at distance $2$ in $K_p(n,m)$ from the vertex $v$ that we are exploring from, which correspond to the direct neighbours of $v$ in the RIG $G(n,m,p)$. Given two vertices $v,w$ in a graph $G$, we define $d_{G}(v,w)$ as the graph distance between $v$ and $w$, with the convention that $d_G(v,w)=\infty$ if they are in different connected components. For any $r \in \mathbb{N}$ we define the sphere centered in $v$ of radius $r$ as

	\begin{equation}
	\Ball_r(v) := \{w \in \Ucal \cup \Wcal: d_{K_p(n,m)}(v,w)=r\}.
	\end{equation}
Note that if $r$ is odd, then all the vertices in $\Ball_r(v)$ belong to the opposite side of $K_p(n,m)$ with respect to $v$, while if $r$ is even, then they are on the same side. Moreover, given a set $\mathcal{H}\subset (\Ucal \cup \Wcal )$, we write

	\begin{equation}\label{eq:ballout}
	\Ball_r(v,\mathcal H) := \{w\in (\Ucal \cup \Wcal)\setminus \mathcal H: d_{K_p(n,m)\setminus \mathcal{H}}(v,w)=r\},
	\end{equation}
	where $K_p(n,m)\setminus \mathcal{H}$ is the subgraph of $K_p(n,m)$ induced by $ (\Ucal \cup \Wcal )\setminus \mathcal{H}$.
We can now properly define the exploration process on $K_p(n,m)$:

\begin{defi}[Two-step exploration process]\label{def:exploration}Consider the graph $K_p(n,m)$ and pick a vertex $v_0\in \Ucal \cup \Wcal$ according to any arbitrary rule. We define the two-step exploration process starting from the vertex $v_0$ as the process $\big( \Act (k),\Dead (k), \Opp (k)\big)_{k \geq 0}$, with update rules as follows:

\begin{itemize}
\item[Initialize:] Define the \emph{active}, \emph{dead} and \emph{opposite} vertex sets at time $k=0$ as

	\begin{equation}
	\Act (0) :=\{v_0\}, \quad \Dead (0):=\varnothing , \quad \Opp (0) := \varnothing.
	\end{equation}
\item[Step $k \geq 1$:] If $\Act (k-1) \neq \varnothing$, then choose a vertex $v_k \in \Act (k-1)$ uniformly at random, else, pick the vertex $v_k$ uniformly at random in $\Ucal\setminus \Dead (k-1)$ if $v_0 \in \Ucal$ or in $\Wcal\setminus \Dead (k-1)$ if $v_0 \in \Wcal$ and update as follows:

	\begin{equation}\begin{split}
	\Act (k) &:=(\Act (k-1)\setminus \{v_k\}) \cup \Ball_2(v_k,\Opp(k-1)\cup \Dead (k-1)),\\
	\Dead (k) &:= \Dead (k-1) \cup \{v_k\},\\
	\Opp (k)& := \Opp (k-1) \cup \Ball_1(v_k).
	\end{split}\end{equation} 

\item[Terminate:] If $\Dead (k)=\Ucal$ or $\Dead (k) = \Wcal$, then terminate the process.
\end{itemize}
\end{defi}

Note that, by construction, all the vertices in $\Act (k)$ and $\Dead (k)$ are on the same side as $v_0$, while those in $\Opp (k)$ are on the opposite side.
The exploration is constructed in such a way that every time $k$ for which $\Act (k)=\varnothing$ we have completed the exploration of a connected component of $K_p(n,m)$.
 For brevity we define the set of all vertices found by time $k$ by the exploration process $\Dex (k) := \Dead (k) \cup \Act (k)$, since it will be used frequently in the rest of the paper.

Moreover we define the edge set related to the exploration as follows:

\begin{defi}[Edge-set process related to the two-step exploration process] \label{def:edge}Consider the two-step exploration process of the graph $K_p(n,m)$ from Definition \ref{def:exploration}. We define the edge-set process $(\Ed(k))_{k\geq 0}$ such that for every $k \geq 0$, $\Ed(k)$ the set of edges of the RIG over the subgraph of $K_p(n,m)$ induced by $\Dead(k)\cup \Opp(k)$.
\end{defi}

This edge-set process will be useful in Section \ref{sec:edge} to count the edges in the RIG as the exploration process goes on, and thus to derive the scaling limit for the number of edges in large critical components.

\subsection{The adapted process}
\label{sec:adaproc}
  In order to derive relevant informations about the size of the connected components from our exploration, we define the stochastic process $\big(S^\lambda_n(k)\big)_{k\geq 0}$, adapted to the filtration $\big( \Fil_n (k)\big)_{k\geq 0}$ generated by the exploration itself, as follows:

	\begin{equation}\label{eq:adapted}\begin{split}
	S_n^\lambda (0):=1, &\qquad S_n^\lambda (k):=S_n^\lambda (k-1)+X_n(k)\\ X_n(k)&:=\big|\Ball_2(v_k,\Opp(k-1)\cup \Dex (k-1))\big|-1.
	\end{split}\end{equation}
 Moreover we define the process $\big(R^\lambda_n(k)\big) _{k\geq 0}$ as the reflected version of $\big(S^\lambda_n(k)\big)_{k\geq 0}$, i.e.,

	\begin{equation}
	 R^\lambda_n(k):= S^\lambda_n(k)-\min_{j< k}  S^\lambda_n(k)+1.
	\end{equation}

The majority of the rest of this paper will be devoted to the study of these processes, from which we will be able to recover important information about the structure of the critical RIG.
This is possible since, from the definitions we gave, $R^\lambda_n (k) = A (k)$, that is, this process keeps track exactly of the number of active vertices. From this we obtain that if we define $T_N:=\min \{k:S^\lambda_n(k)=1-N\}$, then $T_N$ is the time at which the process has exhausted the exploration of the $N$th connected component (by order of exploration) $\cluster^{(N)}$ of $K_p(n,m)$. It follows that 

	\begin{equation}\label{eq:expsets}
	\Ver(\cluster^{(N)})=(\Dead (T_N)\setminus \Dead (T_{N-1}))\cup (\Opp (T_N)\setminus \Opp (T_{N-1})).
	\end{equation}
Moreover, if $v_0 \in \Ucal$,
	\begin{equation}\label{eq:expsides}\begin{split}
	\Ver(\cluster^{(N)})\cap \Ucal & = \Dead (T_N)\setminus \Dead (T_{N-1}),\\
	\Ver(\cluster^{(N)})\cap \Wcal &= \Opp (T_N)\setminus \Opp (T_{N-1}),
	\end{split}\end{equation}
 and vice versa if $v_0 \in \Wcal$.
These equalities allow us to reconstruct the scaling limit of the size of the largest components of $K_p(n,m)$ and of $G(n,m,p)$ from the process $\big(S^\lambda_n(k)\big)_{k\geq 0}$ as,  in particular, by Definition \ref{defi:intersection} we see that for every $v \in \Ucal$, 
	\begin{equation}\label{eq:rigcluster}
	\cluster_{G(n,m,p)}(v)=\cluster_{K_p(n,m)}(v) \cap \Ucal.
	\end{equation}

Considering how the RIG is defined, it would be reasonable to expect that the best approach would be to always start the exploration from $\Ucal$, since, doing so, the vertices explored in step $k$ are the actual neighbours of $v_k$ in $G(n,m,p)$. However, it turns out the the most effective proof strategy is to always start the exploration process from the smaller side, even when it is the  side which represents communities. This is the case because if the exploration starts from the smaller side all moments of $|\Ball_2(v_k)|$ are bounded and, consequently, the sequence of processes $(S_n^\lambda(k))_{k \geq 0}$ satisfies a central limit theorem as $n \to \infty$. If the exploration starts instead from the larger side, then $\Var (|\Ball_2(v_k)|)\to \infty$ and thus the exploration process is harder to study, since $\Ball_1(v_k)$ is typically empty, but if it is not, it is very large. We will prove most of the theorems first under the assumption that $\alpha >1$, so that exploration that is easier to study is also the more natural one, i.e., the one that starts from $\Ucal$. In the regime in which $\alpha<1$, in order to study the adapted process, we have to start the exploration from $\Wcal$ instead, and adapt all the results keeping in mind the following remark:

\begin{rk}[Symmetry properties of $K_p(n,m)$]\label{rk:invert}Note that since $K(n,m)$ is isomorphic to $K(m,n)$, all the statements proved using the exploration described in Definition \ref{def:exploration} and assuming $v_0 \in \Ucal$ also hold for the same exploration starting from $v_0 \in \Wcal$, with the roles of $n$ and $m$ and of $\Ucal$ and $\Wcal$ reversed. 
\end{rk}

\subsection{Proof of the main theorems, subject to the convergence of the adapted process to a Brownian motion}
\label{sec:brownian}

In this section we prove that the scaling limits in Theorems~\ref{thm:main} and \ref{thm:main2} hold, assuming the convergence of the rescaled adapted process to a Brownian motion in the $J_1$-topology (see e.g. \cite[Chapter 6]{JacShi87} for a detailed discussion about the properties of the $J_1$ topology).

The main tool for the proof of the scaling limit for component sizes expressed in Theorems~\ref{thm:main} and \ref{thm:main2} is a scaling limit for the process $\big(S^\lambda_n(k)\big)_{k\geq 0}$. Define the rescaled process

	\begin{equation}
	\overline{S}^\lambda_n (s) = n^{-1/3} S^\lambda_n\big( \lfloor s n^{2/3} \rfloor \big).
	\end{equation}
The following theorem establishes its limit in distribution:

\begin{thm}[Brownian limit of the adapted process]\label{the:brownian} Consider the two-step exploration process on $K_p(n,m)$  from Definition \ref{def:exploration}, starting from $v_0 \in \Ucal$, with $p= p_c (1+\lambda n^{-1/3})$ and $m=n^\alpha$, for some $\lambda \in \mathbb{R}, \alpha > 1$.  
Then, as $n\to \infty$, 
	\begin{equation}
	\big(\overline{S}^\lambda_n (s)\big)_{s\geq 0} \dto \big( W (s) +2\lambda s - s^2/2\big)_{s \geq 0},
	\end{equation}
where $\big( W (s)\big)_{s\geq 0}$ is a standard Brownian motion and the convergence in distribution is in the $J_1$-topology.
\end{thm}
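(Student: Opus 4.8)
The plan is to deduce Theorem~\ref{the:brownian} from the Martingale Functional Central Limit Theorem (MFCLT) of \cite{FedHofHolHul16a}, applied to the Doob decomposition of the adapted process, in the spirit of Aldous' proof for the ERRG \cite{Ald97}. Write $S^\lambda_n(k)=1+M_n(k)+A_n(k)$ with
\[
A_n(k):=\sum_{j=1}^{k}\E\big[X_n(j)\mid\Fil_n(j-1)\big],\qquad M_n(k):=S^\lambda_n(k)-1-A_n(k),
\]
so that $(M_n(k))_{k\ge0}$ is an $(\Fil_n(k))_{k\ge0}$-martingale with increments $\Delta M_n(k)=X_n(k)-\E[X_n(k)\mid\Fil_n(k-1)]$. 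It then suffices to verify, for every fixed $T,\varepsilon>0$:
\begin{enumerate}[label=\textup{(\alph*)}]
\item $n^{-1/3}A_n\big(\lfloor sn^{2/3}\rfloor\big)\pto 2\lambda s-\tfrac{s^2}{2}$, uniformly for $s\in[0,T]$ (the parabolic drift);
\item $n^{-2/3}\sum_{k\le sn^{2/3}}\Var\big(X_n(k)\mid\Fil_n(k-1)\big)\pto s$ for each $s\in[0,T]$ (the variance);
\item $\sum_{k\le Tn^{2/3}}\E\big[\Delta M_n(k)^2\,\indi\{|\Delta M_n(k)|>\varepsilon n^{1/3}\}\mid\Fil_n(k-1)\big]\pto 0$ (continuity of the limit).
\end{enumerate}
Condition (c) makes the limiting martingale part continuous, and the MFCLT then upgrades the joint convergence of $(M_n,A_n)$ to $\big(\overline S^\lambda_n(s)\big)_{s\ge0}\dto\big(W(s)+2\lambda s-s^2/2\big)_{s\ge0}$ in $J_1$.

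All three rest on the conditional law of the increment. At step $k$ the only edges of $K_p(n,m)$ inspected are the fresh ones from $v_k$ to communities outside $\Opp(k-1)$ and, in turn, from those communities to individuals outside $\Dex(k-1)$; hence, writing $D_k$ for the number of communities in $\Wcal\setminus\Opp(k-1)$ adjacent to $v_k$, we have $D_k\mid\Fil_n(k-1)\sim\Bin\big(m-|\Opp(k-1)|,p\big)$ and $X_n(k)+1\mid\big(\Fil_n(k-1),\{D_k=d\}\big)\deq\Bin\big(n-|\Dex(k-1)|,1-(1-p)^{d}\big)$. Integrating out $D_k$ with $\E[(1-p)^{D_k}\mid\Fil_n(k-1)]=(1-p^2)^{m-|\Opp(k-1)|}$ gives
\[
\E\big[X_n(k)\mid\Fil_n(k-1)\big]=\big(n-|\Dex(k-1)|\big)\Big(1-(1-p^2)^{\,m-|\Opp(k-1)|}\Big)-1 .
\]
For (a) I would substitute $p^2=p_c^2(1+\lambda n^{-1/3})^2=\tfrac1{nm}\big(1+2\lambda n^{-1/3}+\lambda^2 n^{-2/3}\big)$, use the a priori estimates $|\Dex(k-1)|=(k-1)+\Op(n^{1/3})$ and $|\Opp(k-1)|=\op(mn^{-1/3})$ uniformly over $k\le Tn^{2/3}$ (see below), and Taylor-expand to get $\E[X_n(k)\mid\Fil_n(k-1)]=2\lambda n^{-1/3}-k/n+\op(n^{-1/3})$; summing over $k\le\lfloor sn^{2/3}\rfloor$, where $\sum_{k\le sn^{2/3}}k/n\sim\tfrac12 s^2 n^{1/3}$, and dividing by $n^{1/3}$ yields (a). The hypothesis $\alpha>1$ is crucial exactly here: $|\Opp(k-1)|$ grows like $(k-1)\sqrt{m/n}$, so $|\Opp(k-1)|/m\asymp k/\sqrt{nm}$, which is $\op(n^{-1/3})$ uniformly on $k\le Tn^{2/3}$ precisely when $\alpha>1$; for $\alpha<1$ this term is of larger order, forcing the exploration to be run from $\Wcal$ as in Remark~\ref{rk:invert}.

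For (b) I would apply the law of total variance,
\[
\Var\big(X_n(k)\mid\Fil_n(k-1)\big)=\big(n-|\Dex(k-1)|\big)\,\E\!\big[q_{D_k}(1-q_{D_k})\big]+\big(n-|\Dex(k-1)|\big)^2\Var\big(q_{D_k}\big),
\]
with $q_d:=1-(1-p)^{d}$; since $\E[q_{D_k}(1-q_{D_k})]=(1-p^2)^{m-|\Opp(k-1)|}-(1-2p^2+p^3)^{m-|\Opp(k-1)|}\sim mp^2=n^{-1}(1+o(1))$ and $\Var(q_{D_k})\sim mp^3$, the first term tends to $1$ while the second is of order $n^2 mp^3=n^{(1-\alpha)/2}\to0$ (again using $\alpha>1$); summing $\lfloor sn^{2/3}\rfloor$ of these and dividing by $n^{2/3}$ gives $s$. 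For (c), the point is that conditionally on $\Fil_n(k-1)$ the variable $X_n(k)+1$ is stochastically dominated by the fixed mixed binomial $Z:=\big(\Bin(n,1-(1-p)^{N})\mid N\big)$ with $N\sim\Bin(m,p)$ (obtained by deleting no vertices), and $Z$ has uniformly bounded moments of every order because $\alpha\ge1$ makes $(n/m)^{r}\E[N^{2r}]=O(1)$ for each $r$ — this is the ``bounded moments of $|\Ball_2(v_k)|$'' property noted after Remark~\ref{rk:invert}, which fails for $\alpha<1$. Hence $\E[|\Delta M_n(k)|^{q}\mid\Fil_n(k-1)]=O(1)$ for every fixed $q$, so by Markov's inequality the $k$-th summand in (c) is at most $(\varepsilon n^{1/3})^{2-q}O(1)$, and taking any $q>4$ makes the sum over $k\le Tn^{2/3}$ of order $n^{(4-q)/3}\to0$.

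The a priori estimates used above are cheap. The crude bound $\E[X_n(k)\mid\Fil_n(k-1)]\le nmp^2-1=2\lambda n^{-1/3}+O(n^{-2/3})$ gives $A_n(k)=O(n^{1/3})$ for $k\le Tn^{2/3}$; the bound $\Var(X_n(k)\mid\Fil_n(k-1))\le\E[Z^2]=O(1)$ plus Doob's $L^2$-inequality gives $\max_{k\le Tn^{2/3}}|M_n(k)|=\Op(n^{1/3})$, hence $\max_{k\le Tn^{2/3}}|S^\lambda_n(k)|=\Op(n^{1/3})$ and, since $|\Dex(k-1)|=(k-1)+R^\lambda_n(k-1)$, also $|\Dex(k-1)|=(k-1)+\Op(n^{1/3})$; and $|\Opp(k-1)|=\op(mn^{-1/3})$ w.h.p.\ follows from a union bound and Chernoff's inequality, $|\Opp(k-1)|$ being dominated by a sum of $k-1$ i.i.d.\ $\Bin(m,p)$'s of mean $(k-1)mp\gg\log n$ when $\alpha>1$ and $k\le Tn^{2/3}$. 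I expect the delicate step to be (a): one must carry the deterministic individual-depletion term $|\Dex(k-1)|\approx k$, which produces the parabolic $-s^2/2$, while simultaneously showing the random community-depletion term $|\Opp(k-1)|$ is negligible after summation — the very computation that forces $\alpha>1$ and, via $p^2=p_c^2(1+2\lambda n^{-1/3}+\lambda^2 n^{-2/3})$, accounts for the ERRG form of the limit with its characteristic factor $2$ in front of $\lambda$.
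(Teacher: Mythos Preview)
Your approach is essentially the paper's: apply the MFCLT to the Doob decomposition of $(S^\lambda_n(k))_{k\ge0}$ and verify drift, variance, and a jump/continuity condition. Two cosmetic differences are harmless: you work with the exact conditional law $X_n(k)+1\mid(\Fil_n(k-1),D_k)\sim\Bin\big(n-|\Dex(k-1)|,\,1-(1-p)^{D_k}\big)$, whereas the paper uses the asymptotically equivalent $\Bin\big((n-|\Dex(k-1)|)D_k,\,p\big)$; and you check a Lindeberg-type condition via $q$th moments with $q>4$, while the paper bounds $\E\big[\sup_k|\Delta M_n(k)|^2\big]$ through an explicit fourth-moment computation for the dominating variable $Z_2$. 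Both routes are standard and yield the same conclusion.

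There is, however, one genuine gap in your a~priori estimates. You claim $|\Dex(k-1)|=(k-1)+\Op(n^{1/3})$ by arguing $A_n(k)=O(n^{1/3})$ and $\max_k|M_n(k)|=\Op(n^{1/3})$, hence $\max_k|S^\lambda_n(k)|=\Op(n^{1/3})$. But your crude bound $\E[X_n(k)\mid\Fil_n(k-1)]\le nmp^2-1$ gives only an \emph{upper} bound on the compensator $A_n$; without a matching lower bound you cannot control $\min_k S^\lambda_n(k)$, and hence not $R^\lambda_n(k)=S^\lambda_n(k)-\min_{j<k}S^\lambda_n(j)+1$. A lower bound on $\E[X_n(j)\mid\Fil_n(j-1)]$ in turn requires an upper bound on $D^+(j-1)$ --- precisely what you are trying to establish --- so the argument is circular. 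The fix (which is what the paper does in Lemma~\ref{lem:supactive}) is to first obtain the non-circular crude bound $D^+(tn^{2/3})=\Op(n^{2/3})$ from $\E[D^+(k)]\le 2k(1+o(1))$ and Markov, feed this (together with the easy bound on $Q$) into a lower stochastic bound $X_n(k)\succeq Z_2^{(\varepsilon)}$, and then use Kolmogorov's inequality on both sides to conclude $\sup_{k\le tn^{2/3}}A(k)=\op(n^{2/3})$. This weaker estimate is all the drift computation needs, since the relevant error term is $A(k-1)/n=\op(n^{-1/3})$; your stronger $\Op(n^{1/3})$ is neither required nor available at this stage.
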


%We compute the limit of the process in the case in which $\alpha > 1$ because it turns out to be easier, we will be able to deduct the results about the case $\alpha <1$ simply considering the size of the connected components in $\Wcal$ instead of $\Ucal$.

We next analyse the process $\big(R^\lambda_n (k)\big)_{k\geq 0}$, which is non-negative and whose excursions identify the explorations of the different connected components. We deduce from Theorem \ref{the:brownian} that

	\begin{equation}\begin{split}\label{eq:reflectconvergence}
	\big( \overline R^\lambda_n (s)\big)_{s\geq 0}&:=\big( n^{-1/3} R^\lambda_n\big( \lfloor s n^{2/3} \rfloor \big)\big)_{s\geq 0} \dto \big( \overline R^\lambda (s)\big)_{s \geq 0}, \\ 
	\overline R^\lambda (s) &:= W (s) +2\lambda s - s^2/2-\inf_{r\leq s}\big( W (r) +2\lambda r - r^2/2\big),
	\end{split}\end{equation} 
	since the reflection map is continuous \cite{DupIsh91} and $(W(s))_{s \geq 0}$ is continuous almost surely.
	
From now on, we will omit the rounding signs every time we approximate discrete-time variables by continuous ones, since such approximations never become an issue in the proofs.
	
We now show that  \eqref{eq:alfageq} in Theorem \ref{thm:main} follows from Theorem \ref{the:brownian}:

\begin{proof}[Proof of Theorem \ref{thm:main} (i) subject to Theorem \ref{the:brownian}] 

From \eqref{eq:expsides} and \eqref{eq:rigcluster} we obtain that 

	\begin{equation}\label{eq:deadtime}
	|\cluster^{(N)}\cap \Ucal| = D(T_N)-D(T_{N-1})=T_N-T_{N-1},
	\end{equation}
so the sizes of the clusters of $G(n,m,p)$ are exactly the lengths of the excursions of $\big(R^\lambda_n (k)\big)_{k\geq 0}$, that is, $n^{2/3}$ times the lengths of the excursions of $\big( \overline R^\lambda_n (s)\big)_{s \geq 0}$. From the convergence in distribution in \eqref{eq:reflectconvergence}, we know that for every $T,j >0$ the lengths of the $j$ longest excursions up to time $T$ of the process $\big(\overline R^\lambda_n (k)\big)_{k\geq 0}$ converge in distribution to those of the $j$ longest excursions of the process $\big(\overline R^\lambda (k)\big)_{k\geq 0}$ up to time $T$.
%We know from \cite{Ald97} that the excursions of the process $\big( \overline R^\lambda (s)\big)_{s \geq 0}$ can be ordered decreasingly almost surely and that they converge to a sequence of tight random variables, so for every $j \in \mathbb N$, the rescaled size of the $j$ largest components converges in distribution.
 Furthermore, by  \cite[Lemma 25]{Ald97}, for every $j\in \mathbb N$, $\varepsilon >0$, there exists a constant $T(j,\varepsilon)$ such that the $j$ longest excursions of $\big(\overline R^\lambda (k)\big)_{k\geq 0}$ are completed by time $T(j,\varepsilon)$ with probability larger than $1-\varepsilon$, so that the $j$ longest excursions of  $\big(\overline R_n^\lambda (k)\big)_{k\geq 0}$ converge in distribution to the $j$ longest excursions of $\big(\overline R^\lambda (k)\big)_{k\geq 0}$, so we obtain convergence in the product topology by \eqref{eq:deadtime}. 
Over finite dimensional spaces the product topology and the $\ell^2_\searrow$-topology are equivalent, so we know that for every fixed $J \in \mathbb N$, the sequence 
\begin{equation}
\big( n^{-2/3}|\cluster_i|\big)_{1\leq i\leq J}\dto \big(\mathbf{C}_i^{\lambda}\big)_{1\leq i\leq J}.
\end{equation}
 in the $\ell^2_\searrow$-topology. Moreover, by \eqref{eq:reflectconvergence}, we know that the sequence $\big( n^{-2/3}|\cluster_i|\big)_{i\geq 1}$ is tight in $\ell^2$ (see \cite{Ald97}), so that for every $\varepsilon \geq 0$

\begin{equation}
\lim_{J \to \infty} \lim_{n \to \infty} \P\Big( \sum_{i>J} n^{-4/3}|\cluster_i|^2 > \varepsilon\Big)=0,
\end{equation}
and the claim follows.
\end{proof}

In order to prove Theorem \ref{thm:main} (ii), a more complicated argument is needed, using Remark \ref{rk:invert}. Indeed we see that if $\alpha <1$, then for any vertex $v \in \Ucal$,

	\begin{equation}
	|\Ball_1(v) |\overset{d}{=} \Bin \big(n^\alpha,n^{-\frac{1+\alpha}{2}}(1+o(1))\big)\pto 0.
	\end{equation}
This means that the majority of the vertices on $\Ucal$ are isolated and thus $S_n^\lambda (k)=S_n^\lambda (k-1)-1$ w.h.p., and the process is thus driven by the rare events that happen  when $v_k$ is not isolated, as in this case its degree is very large, of order $\sqrt{ n/m}$. This makes the proof of a convergence analogous to that in Theorem \ref{the:brownian} much more difficult. To deal with this issue, we invert the perspective, running the exploration starting from a vertex $v_0 \in \Wcal$, so that we can apply again Theorem \ref{the:brownian}, since  $n=m^{1/\alpha}$, with $1/\alpha >1$. 
We recall that, if $v_0 \in \Wcal$,
%We notice that $K(n,m)$ is isomorphic to $K(m,n)$.
%We  define a graph $K(m,\overline{m})$ with $n=n$ and $m =m$ over the sets $\Ucal', \Wcal'$, as the graph obtained from an automorphism $\phi \in \mathbf{Aut} (K(n,m))$ such that $\phi:  K(n,m)\mapsto K(m,\overline{m})$   with $\phi(\Ucal')=\Wcal$, $\phi(\Wcal') = \Ucal$. 
%We see that 
%
%	\begin{equation}
%	\cluster_{G(n,m,p)}(v)=\cluster_{K_p(m,\overline{m})}(v) \cap \Wcal ',
%	\end{equation}
%	moreover if we run the exploration from Definition \ref{def:exploration} on $K_p(m,\overline{m})$, since $\overline{m}=m^{1/\alpha}$, with $1/\alpha >1$, now Theorem \ref{the:brownian} holds. We write
	
	\begin{equation}
\Ver( \cluster^{(N)})\cap \Ucal= \Opp (T_N)\setminus \Opp (T_{N-1}).
	\end{equation}
	Consequently, to prove Theorem \ref{thm:main} (ii), we need the following lemma, which we will prove in Section \ref{sec:mclt}:
	
	\begin{lem}[Concentration of size of the opposite set]\label{lem:concopp} Consider the two-step exploration process on $K_p(n,m)$ from Definition \ref{def:exploration}, starting from $v_0 \in \Ucal$, with $p= p_c (1+\lambda n^{-1/3})$ and $m=n^\alpha$ for some $\lambda \in \mathbb{R}, \alpha > 1$. Then as $n\to \infty$, for every $T\in (0, \infty)$,
	
	\begin{equation}
	\sup_{t\leq T}\Big| \dfrac{Q(tn^{2/3})}{n^{1/6}m^{1/2}}-t\Big| \pto 0.
	\end{equation}
	\end{lem}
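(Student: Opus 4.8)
The plan is to analyze the increments of the process $(Q(k))_{k \geq 0}$ directly. At each step $k$, the opposite set grows by $|\Ball_1(v_k) \setminus \Opp(k-1)|$. Since $\Opp(k-1)$ has size $O(k n^{-(1+\alpha)/2} \cdot$ something$)$—in any case $o(m)$ on the relevant time scale $k \le T n^{2/3}$—and since $v_k$ belongs to $\Ucal$, the conditional law of $|\Ball_1(v_k)|$ given $\Fil_n(k-1)$ is $\Bin(m - Q(k-1), p)$, which is very close to $\Bin(m,p)$. Thus $\E[Q(k) - Q(k-1) \mid \Fil_n(k-1)] = (m - Q(k-1)) p = mp(1 + o(1)) = n^{-1/2} m^{1/2}(1 + \lambda n^{-1/3})(1+o(1))$, uniformly over $k \le T n^{2/3}$. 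Summing over $k$ up to $t n^{2/3}$ gives a predictable (compensator) term equal to $t \, n^{2/3} \cdot n^{-1/2} m^{1/2} (1+o(1)) = t \, n^{1/6} m^{1/2}(1+o(1))$, which matches the claimed centering.

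The second step is to control the martingale part $M(k) := Q(k) - \sum_{j=1}^{k} \E[Q(j)-Q(j-1)\mid \Fil_n(j-1)]$. Each increment is centered with conditional variance bounded by $\E[|\Ball_1(v_k)|^2 \mid \Fil_n(k-1)] \le mp + (mp)^2 = O(n^{-1/2}m^{1/2}) + O(n^{-1}m) = O(n^{\alpha-1})$ (the second term dominates since $\alpha>1$). Summing the conditional variances over $k \le T n^{2/3}$ gives a predictable quadratic variation of order $n^{2/3} \cdot n^{\alpha-1} = n^{\alpha - 1/3}$. By Doob's $L^2$-maximal inequality, $\E[\sup_{k \le T n^{2/3}} M(k)^2] = O(n^{\alpha-1/3})$, so $\sup_{k \le T n^{2/3}} |M(k)| = \Op(n^{\alpha/2 - 1/6})$. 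Dividing by the normalization $n^{1/6} m^{1/2} = n^{1/6 + \alpha/2}$ we get $\Op(n^{-1/3}) \to 0$, which kills the martingale fluctuations uniformly. Finally one checks that the difference between the true compensator $\sum_j (m - Q(j-1))p$ and the idealized $t\, n^{1/6} m^{1/2}$ is negligible after rescaling: the correction $\sum_j Q(j-1) p$ is at most $Tn^{2/3} \cdot Q(Tn^{2/3}) \cdot p$, and since a crude a priori bound gives $Q(Tn^{2/3}) = \Op(n^{1/6+\alpha/2})$, this correction divided by $n^{1/6+\alpha/2}$ is $\Op(n^{2/3} \cdot n^{-1/2}) = \Op(n^{1/6})$... so one must be more careful here: instead I would first establish the crude bound $Q(k) = O(k \cdot mp)$ only gives $Q(Tn^{2/3}) = O(n^{2/3} m p) = O(n^{1/6 + \alpha/2})$ which is the right order, and then note the correction term $p \sum_{j \le Tn^{2/3}} Q(j-1) \le p \cdot Tn^{2/3} \cdot \max_j Q(j)$; to make this $o(n^{1/6+\alpha/2})$ one uses that on the critical scale the cluster containing $v_0$, hence $\Opp$, is actually $o(n^{2/3})$ in the number of steps within each excursion, or more simply one subtracts the leading linear term first and iterates. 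The clean way is to write the exact recursion for $\E[Q(tn^{2/3})]$, solve it (it is essentially $m(1-(1-p)^{tn^{2/3}}) \approx m \cdot tn^{2/3} p$ since $tn^{2/3}p \to 0$), and then concentrate around the mean via the martingale bound above.

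The main obstacle I expect is getting the error in the compensator genuinely uniform in $t \le T$ while dividing by the relatively small normalization $n^{1/6+\alpha/2}$: one needs the second-order term $t^2 n^{4/3} p^2 m/2 = t^2 n^{1/6+\alpha/2}/2 \cdot n^{-1/6}$... wait, $n^{4/3} p^2 m = n^{4/3} \cdot n^{-1} m^{-1} \cdot m = n^{1/3}$, so that term is $O(n^{1/3})$, which divided by $n^{1/6+\alpha/2}$ is $o(1)$ precisely when $\alpha > 1/3$, automatic here. So in fact the Taylor expansion $1-(1-p)^{tn^{2/3}} = tn^{2/3}p - \binom{tn^{2/3}}{2}p^2 + \dots$ has all higher terms controlled, and the $o(1)$ is uniform on $[0,T]$ because the functions $t \mapsto t$ are equicontinuous there. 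Thus the real content is the martingale $L^2$ bound combined with this deterministic expansion, and the only delicate point is confirming that $\Opp(k-1) \cup \Dex(k-1)$ being removed in the definition of $\Ball_1$ versus $\Ball_2$ does not alter the conditional distribution of $|\Ball_1(v_k)|$ beyond the negligible $\Bin(m - Q(k-1), p)$ correction already accounted for.
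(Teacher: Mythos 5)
Your proof is correct but takes a genuinely different route from the paper. You decompose $Q(k)$ into its compensator $\sum_{j\le k}(m-Q(j-1))p$ plus a martingale $M(k)$, bound $\sup_{k\le Tn^{2/3}}|M(k)|$ via Doob's $L^2$-maximal inequality, and expand the compensator; the paper instead sandwiches $Q(k)$ stochastically between two i.i.d.\ binomial sums (upper bound by $\sum_j Z_1(j)$ with $Z_1\deq\Bin(m,p)$, lower bound by $\sum_j\widetilde Z_1(j)$ with a deflated parameter), applies Chebyshev to each, and only at the end upgrades pointwise to uniform convergence via monotonicity of $k\mapsto Q(k)$. Your route gets uniformity in one stroke from Doob but must track the self-referential correction $p\sum_{j<k}Q(j-1)$; the paper's route is more elementary and avoids the compensator bookkeeping at the cost of a two-sided coupling. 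Two small points in your writeup: first, in the middle you miscompute the rescaled compensator correction as $\Op(n^{1/6})$ and float an excursion-based remedy that is not needed --- the crude a priori bound $Q(Tn^{2/3})=\Op(n^{1/6+\alpha/2})$ already gives $p\sum_{j\le Tn^{2/3}}Q(j-1)=\Op(p\cdot n^{2/3}\cdot n^{1/6+\alpha/2})=\Op(n^{1/3})$, and dividing by $n^{1/6+\alpha/2}$ yields $\Op(n^{1/6-\alpha/2})\to 0$ since $\alpha>1$, which your ``clean way'' at the end correctly recovers. Second, you bound the conditional variance of an increment by the second moment $mp+(mp)^2=O(n^{\alpha-1})$, whereas the actual conditional variance is $(m-Q(k-1))p(1-p)\le mp=O(n^{(\alpha-1)/2})$, a much smaller quantity; your looser bound still gives $\Op(n^{-1/3})$ after rescaling and so suffices, but it is worth knowing the slack is there.
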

We also need, in order to count the number of edges in large components and to prove convergence in the $\ell^2_\searrow$-topology, a uniform upper bound on the degree of vertices in $K_p(n,m)$.
\begin{lem}\label{lem:commsize} Consider the graph $K_p(n,m)$. Then, as $n\to \infty$, with $m=n^\alpha$ for a fixed $\alpha \in (0,1)$,
	\begin{equation}\label{eq:commsize1}
	\max_{w\in\Wcal} \partial B_1(w) \leq  np(1+\op(1)).
	\end{equation}
If instead $\alpha >1$,
	\begin{equation}\label{eq:commsize2}
	\max_{v\in\Ucal} \partial B_1(v) \leq  mp(1+\op(1)).
	\end{equation}
\end{lem}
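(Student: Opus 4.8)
The plan is to reduce Lemma~\ref{lem:commsize} to a standard Chernoff bound for binomial variables combined with a union bound, exploiting that on the relevant side of $K_p(n,m)$ the mean degree tends to infinity polynomially in $n$, while the number of vertices on that side is also only polynomial in $n$.

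First I would record that, for a community $w\in\Wcal$, the quantity $\partial B_1(w)$ is exactly the number of edges of $K_p(n,m)$ incident to $w$, so $\partial B_1(w)\deq\Bin(n,p)$; moreover the family $\big(\partial B_1(w)\big)_{w\in\Wcal}$ is mutually independent, because distinct communities use disjoint collections of edge-indicators in the definition of $K_p(n,m)$. With $p=p_c(1+\lambda n^{-1/3})$ and $m=n^\alpha$, $\alpha\in(0,1)$, the common mean is $np=\sqrt{n/m}\,(1+o(1))=n^{(1-\alpha)/2}(1+o(1))\to\infty$.

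Next, fix $\varepsilon\in(0,1)$ and invoke the multiplicative Chernoff bound $\P\big(\Bin(n,p)\ge(1+\varepsilon)np\big)\le\exp(-\varepsilon^2 np/3)$. A union bound over the $m$ communities yields
\[
\P\Big(\max_{w\in\Wcal}\partial B_1(w)\ge(1+\varepsilon)np\Big)\le m\exp\big(-\tfrac{\varepsilon^2}{3}np\big),
\]
and the right-hand side equals $n^\alpha\exp\big(-\tfrac{\varepsilon^2}{3}n^{(1-\alpha)/2}(1+o(1))\big)\to 0$, since the exponential factor decays faster than any polynomial in $n$. As $\varepsilon>0$ was arbitrary, this is exactly the assertion $\max_{w\in\Wcal}\partial B_1(w)\le np(1+\op(1))$, i.e.\ \eqref{eq:commsize1}. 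Then \eqref{eq:commsize2} follows by the symmetry of $K(n,m)$ recorded in Remark~\ref{rk:invert}: exchanging the roles of $\Ucal,\Wcal$ and of $n,m$, for $v\in\Ucal$ one has $\partial B_1(v)\deq\Bin(m,p)$, independent over $v\in\Ucal$, with mean $mp=\sqrt{m/n}\,(1+o(1))=n^{(\alpha-1)/2}(1+o(1))\to\infty$ when $\alpha>1$; the identical Chernoff-plus-union-bound argument, now applied to $n$ variables with mean $mp$, gives $\max_{v\in\Ucal}\partial B_1(v)\le mp(1+\op(1))$.

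There is no serious obstacle here; the only point needing a moment's care is that the relevant mean degree ($np$ in the first regime, $mp$ in the second) grows only polynomially rather than linearly in $n$, yet it still grows, and at a polynomial rate that dominates the logarithm of the number of vertices one maximises over. That is precisely why a fixed multiplicative slack $\varepsilon$ suffices rather than a vanishing one, and it is also where the exclusion of the borderline exponent $\alpha=1$ (for which the mean degree would be $\Theta(1)$ and the maximum degree of order $\log n/\log\log n$) becomes essential.
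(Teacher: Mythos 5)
Your proof is correct and takes essentially the same approach as the paper: a concentration inequality for the $\Bin(n,p)$ degree distribution followed by a union bound over the $m$ communities, with the exponential decay beating the polynomial factor $m=n^{\alpha}$. The only cosmetic difference is the choice of tail bound (you use the multiplicative Chernoff bound, the paper uses a Bernstein-type inequality from \cite[Theorem 2.21]{Hofs17}); the independence observation you add is harmless but unneeded for a union bound.
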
	

Further, in order to prove the scaling limit of the number of edges in the largest critical components, we need the following proposition, which we will prove in Section \ref{sec:edge}:

\begin{prop}[Concentration of the number of explored edges]\label{prop:surpll}
Consider the two-step exploration process on $K_p(n,m)$ from Definition \ref{def:exploration}, starting from $v_0 \in \Wcal$, with $p= p_c (1+\lambda m^{-1/3})$ and $m=n^\alpha$ for some $\lambda \in \mathbb{R}, \alpha < 1$.
 Recall the edge process $\big( E (k)\big)_{k \geq 1}=\big( |\mathcal E(k)|\big)_{k\geq 1}$ from Definition \ref{def:edge}. Then, for every  $T\in (0,\infty)$,

	\begin{equation}
	\sup_{t\leq T}\Big| \dfrac{E(tm^{2/3})m^{1/3}}{n}-\dfrac{t}{2}\Big| \pto 0.
	\end{equation}
\end{prop}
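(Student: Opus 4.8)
The plan is to track the edge-set process $\big(E(k)\big)_{k\geq 0}$ explicitly, identifying its increments and showing the mean drift is $\tfrac12 \cdot (n/m^{1/3})^{-1}$ per time unit in the relevant rescaling, with fluctuations of lower order. Since the exploration starts from $v_0\in\Wcal$ (so that the roles of $n$ and $m$ are exchanged and Theorem \ref{the:brownian} and Lemma \ref{lem:concopp} apply with $m$ in the role of $n$), at step $k$ we pick $v_k\in\Wcal\setminus\Dead(k-1)$ and add to $\mathcal E(k)$ exactly the edges of the RIG induced by $\Dead(k)\cup\Opp(k)$ that were not already present. Every newly discovered individual $u\in\Ball_1(v_k)\setminus(\Opp(k-1)\cup\Dex(k-1))$ is joined in the RIG to each community in $\Dead(k)\cup\{v_k\}$ to which it is connected in $K_p(n,m)$, plus the pair $(v_k,u)$ itself contributes; and additionally each already-discovered individual in $\Opp(k-1)$ may gain a RIG-edge to $v_k$. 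So the key combinatorial step is to write
\begin{equation}\label{eq:edgeincr}
E(k)-E(k-1)=\sum_{u\in\Opp(k)} \indi\{\{u,v_k\}\in\Ed(K_p(n,m))\}\cdot\text{(correction for double counting)},
\end{equation}
and then compute $\E[E(k)-E(k-1)\mid\Fil(k-1)]$.

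The first main step is the first-moment computation. By the time $k\asymp tm^{2/3}$, Lemma \ref{lem:concopp} (applied with $n\leftrightarrow m$) gives $Q(k)=|\Opp(k)|=(1+\op(1))\,m^{1/6}n^{1/2}\,t$ — that is, of order $m^{1/6}n^{1/2}$ individuals discovered. Each of these is connected to $v_k$ independently with probability $p=p_c(1+o(1))=(1+o(1))(nm)^{-1/2}$. Hence the conditional expected number of new edges incident to $v_k$ is $\approx Q(k)\cdot p \approx m^{1/6}n^{1/2}t\cdot(nm)^{-1/2}=t\,m^{-1/3}$. Summing over $k\leq tm^{2/3}$ gives total $\approx \sum_{k\leq tm^{2/3}} (k/m^{2/3})\,m^{-1/3}\cdot m^{-2/3}\cdot$(stuff) $= \tfrac{t^2}{2}\,m^{2/3}\cdot m^{-1/3} = \tfrac{t^2}{2}m^{1/3}$... wait — one must be careful: the increment at step $k$ is $\approx (k/m^{2/3})\cdot m^{-1/3}$, so $\sum_{k\le tm^{2/3}} \approx \int_0^{tm^{2/3}} (x/m^{2/3})m^{-1/3}\,dx = \tfrac12 t^2 m^{2/3}\cdot m^{-1/3}= \tfrac12 t^2 m^{1/3}$; dividing by $n/m^{1/3}$ does not give $t/2$. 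The correct accounting must instead note that the edges counted are essentially all the RIG-edges of the component, and these number $\approx \tfrac12\sum_{\text{individuals }u} (\deg_{K_p}(u))^2$-type quantities; alternatively, one counts, for each discovered individual $u$, its degree into the discovered community set, which grows like $D(k)\cdot p$. Thus I would instead write $E(k)\approx \sum_{u\in\Opp(k)} \Bin(D(k),p)$ minus surplus corrections, and since $D(k)=k$ and $Q(k)\approx m^{1/6}n^{1/2}(k/m^{2/3})$, the total is $\approx Q(k)\cdot k\cdot p/2 \approx (k/m^{2/3})^2 m^{1/6}n^{1/2}m^{2/3}(nm)^{-1/2}/2$; at $k=tm^{2/3}$ this is $\tfrac12 t^2 m^{1/6+2/3}n^{1/2-1/2}m^{-1/2}\cdot$ — I would carefully redo these exponents so that after dividing by $n/m^{1/3}$ the leading term is exactly $t/2$ uniformly, using that at criticality the two sides are balanced. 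The point is the $\tfrac12$ arises because each RIG-edge $\{u_1,u_2\}$ through community $v_k$ is counted once when the second of its two endpoints is absorbed.

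The second main step is concentration. Having the martingale decomposition $E(k)=\sum_{j\le k}\E[\Delta E(j)\mid\Fil(j-1)] + M(k)$ with $M$ a martingale, I would bound the conditional variance of the increments: each $\Delta E(j)$ is a sum of $Q(j)$ (nearly) independent indicators of probability $p$, conditionally on an independent count, so $\Var(\Delta E(j)\mid\Fil(j-1))\lesssim Q(j)\,p + (\text{variance of }Q(j)\text{-type terms})$. Using Lemma \ref{lem:commsize} (with $n\leftrightarrow m$) to control $\max_v |\Ball_1(v)|\le np(1+\op(1))$, the jumps are uniformly $O(np)=O((n/m)^{1/2})=o(n/m^{1/3})$, so there is no single dominant jump. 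Summing the conditional variances over $k\le Tm^{2/3}$ and comparing with the square of the target scale $n/m^{1/3}$ shows the martingale part, after rescaling, is $\op(1)$; then Doob's $L^2$-maximal inequality upgrades this to a uniform-in-$t$ statement $\sup_{t\le T}|M(tm^{2/3})|/(n/m^{1/3})\pto 0$. Combined with the uniform convergence of the drift term (which follows from the uniform version of Lemma \ref{lem:concopp}), this yields the claim.

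The hard part will be the bookkeeping in \eqref{eq:edgeincr}: correctly subtracting the already-present RIG-edges so as not to double-count, and verifying that the surplus corrections (RIG-edges between two previously-discovered individuals, or through two different already-dead communities) are genuinely lower order — this is where the factor $\tfrac12$ must be pinned down exactly, and where the fact that $|\Ed(\cluster_i)|$ is of strictly higher order than $|\cluster_i|$ (so that "surplus $\approx$ total") is actually used. Getting the uniformity in $t$ for the drift, rather than just pointwise convergence, also requires a little care, but this follows from the monotonicity of $E(\cdot)$ and $Q(\cdot)$ together with the uniform statement already available in Lemma \ref{lem:concopp}, via a standard partition-of-$[0,T]$ argument.
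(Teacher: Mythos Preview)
Your proposal contains a genuine gap: you have misidentified what the increments $E(k)-E(k-1)$ count. The process $\mathcal{E}(k)$ records edges of the \emph{RIG}, whose vertex set is the individual side $\Ucal$; since the exploration here starts from $v_0\in\Wcal$, we have $\Dead(k)\subseteq\Wcal$ and $\Opp(k)\subseteq\Ucal$, and a RIG edge is a pair $\{u_1,u_2\}\subseteq\Opp(k)$ of individuals sharing a community in $\Dead(k)$. Your displayed increment formula instead counts bipartite edges $\{u,v_k\}\in\Ed(K_p(n,m))$ from $v_k$ to individuals, and your phrases ``joined in the RIG to each community'' and ``the pair $(v_k,u)$'' confirm the confusion: communities are not vertices of the RIG at all. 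This is why both of your first-moment attempts give the wrong order of magnitude (totals of order $m^{1/3}$ rather than $n/m^{1/3}$) and why you could not make the exponents produce $t/2$.

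The correct picture, and the paper's argument, is that at step $k$ the newly dead community $v_k$ induces a \emph{clique} among the individuals in $\Ball_1(v_k)$, contributing on the order of $\binom{|\Ball_1(v_k)|}{2}\approx(np)^2/2=n/(2m)$ RIG edges per step; summing over $k\le tm^{2/3}$ gives $\tfrac{t}{2}\,n/m^{1/3}$ as required. The paper makes this precise with the upper bound $E(k)\le\sum_{j\le k}\binom{|\Ball_1(v_j)|}{2}$ (controlling $\max_w|\Ball_1(w)|\le np(1+\op(1))$ via Lemma~\ref{lem:commsize}) and the matching lower bound $E(j)-E(j-1)\ge\binom{|\Ball_1(v_j,\Opp(j-1))|}{2}$, where the restricted ball sizes can be dominated below by i.i.d.\ $\Bin\big(n-2Tn^{1/2}m^{1/6},p\big)$ variables on the high-probability event from Lemma~\ref{lem:concopp}, followed by a second-moment computation. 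Uniformity in $t$ then follows, as you correctly note at the end, from monotonicity of $E(\cdot)$ and continuity of $t\mapsto t/2$. The factor $\tfrac12$ is simply the $\tfrac12$ in $\binom{np}{2}$, not a double-counting correction.
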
	
	We can now give a proof of Theorem \ref{thm:main} (ii), assuming that the results we stated about the processes related to the exploration hold, which completes the proof of the main theorem.

	\begin{proof}[Proof of Theorem \ref{thm:main} (ii) subject to Theorem \ref{the:brownian}, Lemma \ref{lem:concopp} and Proposition \ref{prop:surpll}] Consider the two-step exploration process on $K_p(n,m)$ as defined in Definition \ref{def:exploration}, starting from $v_0\in \Wcal$. From Theorem \ref{the:brownian}, Lemma \ref{lem:concopp}  and Proposition \ref{prop:surpll} we obtain via Remark \ref{rk:invert}, by an application of Slutsky's theorem (see e.g. \cite[Lemma 3.3]{EthKur86}),
	
	\begin{equation}
	\big(\overline R_n^\lambda (s) , \overline{Q}_n(s), \overline{E}_n(s)\big)_{s\geq 0} \dto \big( \overline R^\lambda (s), s,s/2\big)_{s\geq 0},
	\end{equation}
	where \begin{equation}
	\overline{Q}_n(s):= Q(sm^{2/3})n^{-1/2}m^{-1/6}, \qquad \overline{E}_n(s):=E(sm^{2/3})n^{-1}m^{1/3} .
	\end{equation} 
	Consequently, the number of vertices found  in $\Ucal$ during the $N$th excursion is given by 
	\begin{equation}
	Q(T_N)-Q(T_{N-1})=(T_N-T_{N-1})m^{1/6}n^{1/2}(1+\op(1)),
	\end{equation}  
and, equivalently, the number of edges discovered during the exploration on the $N$th component is given by 
	\begin{equation}
	 E(T_N)-E(T_{N-1})=(T_N-T_{N-1})m^{-1/3}n(1+\op(1)).
	\end{equation}
	Again,  by the same reasoning used to prove Theorem \ref{thm:main} (i), from the convergence in the $J_1$-topology follows that the $j$ longest excursions of $\big( \overline R_n^\lambda (s)\big)_{s\geq 0}$ converge in distribution to the $j$ longest excursions of $\big( \overline R^\lambda (s)\big)_{s\geq 0}$, and so, the convergence holds in the product topology. 	
Again, over finite dimensional spaces the product topology is equivalent to the $\ell^2$-topology, so

\begin{equation}
	\big( n^{-1/2-\alpha/6}|\cluster_i|, n^{-1+\alpha/3}|\Ed(\cluster_i)|\big)_{1\leq i\leq J} \dto \big(\mathbf{C}_i^{\lambda},\mathbf{C}_i^{\lambda}/2\big)_{1\leq i\leq J}.
	\end{equation} 
 in the $\ell^2_\searrow\times \ell^2$-topology. Consequently, to prove the convergence in the $\ell^2_\searrow\times \ell^2$ we need to prove that for every $\varepsilon >0$,
\begin{equation}
\lim_{J \to \infty} \lim_{n \to \infty} \P\Big( \sum_{i>J}( n^{-1-\alpha/3}|\cluster_i|^2+ n^{-2+2\alpha/3}|\Ed(\cluster_i)|^2) > \varepsilon\Big)=0.
\end{equation}
We define the sequence $(\tilde T_i)_{i \geq 1}$ as the sequence of exploration times of  connected components, arranged in decreasing order.
By \eqref{eq:reflectconvergence}, we know that the sequence $(m^{-2/3}\tilde T_i)_{i \geq 1}$ is tight in $\ell^2$, so 

\begin{equation}\label{eq:tailsmall}
\lim_{J \to \infty} \lim_{n \to \infty} \P\Big( \sum_{i>J} m^{-4/3} \tilde T_i^2 > \varepsilon/8\Big)=0.
\end{equation}
We know that for every $i$,

\begin{align}
|\cluster_i^\lambda|&\leq \tilde T_i\max_{w \in \Wcal} \partial B_1(w) , \\
|\Ed(\cluster_i)|&\leq  \tilde T_i \max_{w \in \Wcal}\frac{\partial B_1(w)(\partial B_1(w)-1)}{2},
\end{align}
since all the new vertices and edges explored in the $k$th step must belong to the community formed by all the neighbours of $w_k$ in $K_p(n,m)$ .
We can thus write
\begin{equation}\begin{split}
\P\Big( \sum_{i>J}& (n^{-1-\alpha/3}|\cluster_i|^2+ n^{-2+2\alpha/3}|\Ed(\cluster_i)|^2) > \varepsilon\Big)\\ 
&\leq \P\Big( \sum_{i>J} m^{-4/3}\tilde T_i^2 > \varepsilon/8\Big)+ \P\Big( \max_{w \in \Wcal} \partial B_1(w) > 2pn\Big).
\end{split}\end{equation}
The claim follows from Lemma \ref{lem:commsize} and \eqref{eq:tailsmall}\end{proof}

	We next analyse the critical behaviour of $K_p(n,m)$, and prove Theorem \ref{thm:main2}:
	
%	For this we need the following proposition on the number of surplus edges found by the exploration:
%	
%	\begin{prop}[Scaling of the surplus of the bipartite graph]\label{prop:surppoi}
%Consider the two-steps exploration process on $K_p(n,m)$ from Definition \ref{def:exploration}, starting from $v_0 \in \Ucal$, with $p= p_c (1+\lambda n^{-1/3})$ and $m=n^\alpha$ for some $\lambda \in \mathbb{R}, \alpha > 1$.
% Recall $\big(\mathbf{Sp}(k)\big)_{k>0}$ from Definition \ref{def:edge}, and define $(J(s))_{s\geq 0}$ 
%as the unit jump process  with intensity $(\overline R^\lambda(s))_{s\geq 0}$. Then, as $n \to \infty$,
%	\begin{equation}	
%	\big(\mathbf{Sp}( sn^{2/3})\big)_{s\geq 0}\dto \big(J(s)\big)_{s\geq 0},
%	\end{equation}
%where the convergence is in finite-dimensional distributions.
%\end{prop}
%We can now give a conditional proof of Theorem \ref{thm:main2}:

\begin{proof}[Proof of Theorem \ref{thm:main2} subject to Theorem \ref{the:brownian} and Lemma \ref{lem:concopp}] Using Theorem \ref{the:brownian} and Lemma \ref{lem:concopp} we obtain by Slutsky's theorem the joint convergence

	\begin{equation}\label{eq:surpconv}
	\big(\overline R_n^\lambda (s) , \overline{Q}_n(s)\big)_{s\geq 0} \dto \big( \overline R^\lambda (s), s\big)_{s\geq 0}.
	\end{equation}
We know that the number of vertices in the $N$th component by exploration order is given by
	\begin{equation}
 	Q(T_N)-Q(T_{N-1})+D(T_N)-D(T_{N-1})=Q(T_N)-Q(T_{N-1})+T_N-T_{N-1}.
	\end{equation}
From \eqref{eq:surpconv}, it follows that
\begin{equation}
 Q(T_N)-Q(T_{N-1})+T_N-T_{N-1}=m^{1/2}n^{1/6}(T_N-T_{N-1})(1+\op(1)),
\end{equation}
in particular, the vast majority of the vertices in a large critical component comes from the larger side of $K_p(n,m)$, i.e. from the set $\Opp (T_N)\setminus\Opp (T_{N-1})$.
	Again,  by the same reasoning used to prove Theorem \ref{thm:main}, from the convergence in the $J_1$-topology follows that the $j$ longest excursions of $\big( \overline R_n^\lambda (s)\big)_{s\geq 0}$ converge in distribution to the $j$ longest excursions of $\big( \overline R^\lambda (s)\big)_{s\geq 0}$, (see \cite{Ald97}), and so, the claim follows. The proof of the $\ell^2_\searrow$ in \eqref{eq:bipa} is very similar to the one we just presented for \eqref{eq:alfamin}, minus the argument for the $\ell^2$ convergence of the number of edges. As before, the convergence in the product topology is enough to obtain, for every $J \in \mathbb N$,

\begin{equation}
\big( n^{-1/6-\alpha/2}|\cluster_i|\big)_{1\leq i\leq J} \dto \big(\mathbf{C}_i^{\lambda}\big)_{1\leq i\leq J},
\end{equation}
in the $\ell^2_\searrow$-topology. Again, by \eqref{eq:reflectconvergence}, we know that the sequence $(m^{-2/3}\tilde T_i)_{i \geq 1}$ is tight in $\ell^2$, so 

\begin{equation}\label{eq:tailsmall2}
\lim_{J \to \infty} \lim_{n \to \infty} \P\Big( \sum_{i>J} m^{-4/3} \tilde T_i^2 > \varepsilon/4\Big)=0.
\end{equation}
At every step $k$ of the exploration, the vertices whose exploration is completed are $v_k$ and all its neighbours, so that

\begin{equation}
|\cluster_i|\leq \tilde T_i\Big(1+\max_{w \in \Wcal} \partial B_1(w)\Big).
\end{equation}
Consequently, we can write
\begin{equation}\begin{split}
\P\Big( \sum_{i>J}& n^{-1/3-\alpha}|\cluster_i|^2 > \varepsilon\Big)\\ 
&\leq \P\Big( \sum_{i>J} m^{-4/3} \tilde T_i^2 > \varepsilon/4\Big)+ \P\Big( \max_{w \in \Wcal} \partial B_1(w) > 2pm-1\Big).
\end{split}\end{equation}
Using Lemma \ref{lem:commsize} and \eqref{eq:tailsmall2}, we obtain that
\begin{equation}
\lim_{n \to \infty}\P\Big( \sum_{i>J} n^{-1/3-\alpha}|\cluster_i|^2 > \varepsilon\Big)=0
\end{equation}
and thus the claimed convergence in the $\ell^2_\searrow$-topology follows.
\end{proof}

	\section{The martingale central limit theorem}
	\label{sec:mclt}

In this section we proceed to prove Theorem \ref{the:brownian}. We follow the approach proposed in \cite{Ald97} which has already inspired many other papers (e.g., \cite{BhaHofLee12,DhaHofLeeSen16b,FedHofHolHul16a,NacPer10}), based on the Martingale Functional Central Limit Theorem (MFCLT). This is a well-established proof technique, we next recall its main elements to make the paper self contained.
We will closely follow the formulation of the MFCLT given in \cite{FedHofHolHul16a}, with the necessary modifications required to adapt it to the exploration of the graph $K_p(n,m)$, whose geometry is quite different from the one studied in \cite{FedHofHolHul16a}.
The MFCLT can be applied to a discrete-time process $\big( S_n (k)\big)_{k\geq 0}$ which admits  a Doob's decomposition as follows (see \cite{Will91} for example). We define the process $\big(M_n(k)\big)_{k\geq 0}$ as a martingale,  $\big(L_n(k)\big)_{k\geq 0}$ as its quadratic variation process, and $\big(\Fil_n(k)\big)_{k \geq 0}$ as a filtration with respect to which $\big( S_n (k)\big)_{k\geq 0}$ is measurable. We then decompose $\big( S_n (k)\big)_{k\geq 0}$ as

	\begin{equation}
	S_n (k) := Y_n(k)+M_n(k), \qquad M_n(k)^2:=Q_n(k)+L_n(k),
	\end{equation}
where
	\begin{equation}\label{eq:doob}\begin{split}
	Y_n(k)&:=\sum_{j=1}^k \E[S_n (j)-S_n (j-1)\mid \Fil_n(j-1)], \qquad\\ L_n(k)&:=\sum_{j=1}^k \Var(S_n (j)-S_n (j-1)\mid \Fil_n(j-1)).
	\end{split}\end{equation}
We can now state the conditions  on $\big(S_n (k)\big)_{k\geq 0}$ required to apply the MFCLT, in a way that is easy to apply to our process:
\begin{thm}[MFCLT] 
\label{thm:MFCLTcond}
Consider a sequence of processes $(S_n(k))_{k \geq 0}$, adapted to a sequence of increasing 
filtrations $\Fil_n=\big(\Fil_n(k)\big)_{k \geq 0}$. Suppose that $\big(M_n(k)\big)_{k\geq 0},
\big(Y_n(k)\big)_{k\geq 0}$, and $\big(L_n(k)\big)_{k\geq 0}$, defined as in the Doob decomposition 
above, satisfy the following four conditions, for a fixed $\tau \in \mathbb R$ and every $t\in (0, \infty)$:
\begin{enumerate}
\item 
Continuity of the limit of the process: 
	\begin{equation}
	\label{cond(1)}
	n^{-2/3}\,\E\Big[\sup_{k \leq t n^{2/3}} |M_n (k) -M_n(k-1)|^2 \Big] \to 0.
	\end{equation}
\item
Continuity of the limit of the variance:
	\begin{equation} 
	\label{cond(2)}
	n^{-2/3}\,\E\Big[\sup_{k \leq tn^{2/3}} |L_n (k) - L_n(k-1)| \Big] \to 0.
	\end{equation}
\item 
Parabolic drift condition:
	\begin{equation}
	\label{cond(4)}
	n^{-1/3}\, \sup_{k \leq tn^{2/3}} \left| Y_n(k) - \frac{2\tau k}{ n^{1/3}} 
	+ \dfrac{k^2}{2n} \right| \pto 0.
	\end{equation}
\item 
Limiting linear variance condition:
	\begin{equation}
	\label{cond(3)}
	n^{-2/3}\,L_n( tn^{2/3}) \pto t.
	\end{equation}
\end{enumerate}
Then
	\begin{equation}
	\big(n^{-1/3}S_n( sn^{2/3})\big)_{s \geq 0}  \dto 
	\left(W(s) + 2\tau s - \frac{s^2}{2}\right)_{s \geq 0},
	\end{equation}
	 in the Skorokhod $J_1$-topology, where $\big(W(s)\big)_{s\geq 0}$ is a standard Brownian motion.
\end{thm}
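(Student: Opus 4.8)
The plan is to deduce Theorem~\ref{thm:MFCLTcond} from the classical martingale functional central limit theorem (as used in \cite{Ald97, FedHofHolHul16a}; see also \cite{EthKur86, JacShi87}) by treating separately the predictable drift $\big(Y_n(k)\big)_{k\geq 0}$ and the martingale part $\big(M_n(k)\big)_{k\geq 0}$ in the Doob decomposition, and then recombining them. First I would introduce the rescaled processes $\overline Y_n(s):=n^{-1/3}Y_n(\lfloor sn^{2/3}\rfloor)$ and $\overline M_n(s):=n^{-1/3}M_n(\lfloor sn^{2/3}\rfloor)$, so that $\overline S_n(s)=\overline Y_n(s)+\overline M_n(s)$. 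Condition~(3) is, verbatim, the statement that for every $t\in(0,\infty)$, $\sup_{s\leq t}\big|\overline Y_n(s)-(2\tau s-s^2/2)\big|\pto 0$; thus $\overline Y_n$ converges uniformly on compact time sets, hence in the $J_1$-topology, to the deterministic continuous function $s\mapsto 2\tau s-s^2/2$.

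Next I would apply the martingale FCLT to $\big(\overline M_n(s)\big)_{s\geq 0}$. Since $Y_n$ is $\Fil_n$-predictable, $L_n$ is exactly the predictable quadratic variation $\langle M_n\rangle$ of $M_n$; condition~(4) gives $n^{-2/3}L_n(\lfloor sn^{2/3}\rfloor)\pto s$ for each fixed $s$, and together with the monotonicity of $k\mapsto L_n(k)$ and the continuity of the limit $s\mapsto s$ this upgrades to convergence uniformly on compact time intervals. The remaining hypothesis of the FCLT is negligibility of the jumps of the martingale: condition~(1) says $\E\big[\sup_{k\leq tn^{2/3}}|M_n(k)-M_n(k-1)|^2\big]=o(n^{2/3})$, so after rescaling the maximal squared increment of $\overline M_n$ tends to $0$ in $L^1$; this implies both the conditional Lindeberg condition and C-tightness (every subsequential limit has a.s.\ continuous paths), while condition~(2) ensures that the increments of $L_n$ produce no jump in the limiting quadratic variation. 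The martingale FCLT then yields $\big(\overline M_n(s)\big)_{s\geq 0}\dto \big(W(s)\big)_{s\geq 0}$, where $W$ is the continuous Gaussian martingale with $\langle W\rangle_s=s$, i.e.\ a standard Brownian motion.

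Finally I would recombine the two pieces. Since the limit of $\overline Y_n$ is deterministic, $\big(\overline M_n,\overline Y_n\big)$ converges jointly in distribution to $\big(W,\,2\tau s-s^2/2\big)$ by Slutsky's theorem (see \cite[Lemma 3.3]{EthKur86}), and the addition map on $D([0,\infty),\mathbb R)^2$ is continuous at every pair whose second coordinate is a continuous function, which is the case here. By the continuous mapping theorem this gives $\overline S_n(s)=\overline M_n(s)+\overline Y_n(s)\dto W(s)+2\tau s-s^2/2$ in the $J_1$-topology, which is the assertion.

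The main obstacle is the middle step: verifying that condition~(1) really implies the precise Lindeberg-type and asymptotic-continuity hypotheses of whichever version of the martingale FCLT one invokes, and that conditions~(2) and~(4) jointly deliver \emph{uniform}, not merely pointwise, convergence of the predictable quadratic variation. By contrast, the recombination step is routine, since addition on Skorokhod space is continuous exactly at pairs with a continuous summand and our drift limit is a smooth deterministic function; and reading off condition~(3) as a $J_1$-limit statement is immediate.
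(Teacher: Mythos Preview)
Your proposal is correct and follows the standard route for deriving such a statement from the classical martingale FCLT. However, there is nothing to compare it against: the paper does \emph{not} prove Theorem~\ref{thm:MFCLTcond}. It is stated as a tool, with the remark that the formulation is taken (with minor adaptations) from \cite{FedHofHolHul16a}; the remainder of Section~\ref{sec:mclt} is devoted entirely to verifying conditions~(1)--(4) for the specific process $(S_n^\lambda(k))_{k\geq 0}$, not to proving the abstract theorem itself. So your write-up is effectively supplying a proof that the paper simply imports from the literature.

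That said, your sketch is sound. The decomposition $\overline S_n=\overline Y_n+\overline M_n$, the identification of condition~(3) as uniform convergence of the drift to the parabola, the use of conditions~(1), (2), (4) as the hypotheses of the martingale FCLT for $\overline M_n$, and the recombination via Slutsky plus continuity of addition at continuous summands are exactly the steps one would carry out. Your caveat about the ``main obstacle'' is also well placed: the only real work in turning this into a full proof is matching condition~(1) to the precise Lindeberg/jump-negligibility hypothesis of the version of the FCLT one cites (e.g.\ \cite[Thm.~VIII.3.11]{JacShi87} or the formulation in \cite{EthKur86}), and noting that monotonicity of $L_n$ together with pointwise convergence to a continuous limit yields the required uniform convergence of the predictable quadratic variation.
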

	
The main goal of the rest of this section will be to prove that conditions $(1)-(4)$ apply for $	(S_n(k))_{k \geq 0}=(S_n^\lambda(k))_{k \geq 0}$ as defined in \eqref{eq:adapted}. The application of the MFCLT will yield the proof of Theorem \ref{the:brownian}.
	
\subsection{Continuity conditions for the MFCLT}
\label{sec:continuity}
In the following lemma we prove that the first two conditions hold for the sequence of processes $\big(S_n^\lambda (k)\big)_{k\geq 0}$, as defined in \eqref{eq:adapted}:

\begin{lem}[Continuity of the limit process]\label{lem:12}
Consider the Doob's decomposition of the process $\big(S_n^\lambda (k)\big)_{k\geq 0}$ adapted to the two-step exploration process from Definition \ref{def:exploration}, starting from $v_0 \in \Ucal$, with $p= p_c (1+\lambda n^{-1/3})$ and $m=n^\alpha$ for some $\lambda \in \mathbb{R}, \alpha > 1$. Then, as $n\to \infty$, for every $t \in (0,\infty)$,

\begin{equation}
	\label{lem:cond(1)}
	n^{-2/3}\,\E\Big[\sup_{k \leq t n^{2/3}} \big|M_n (k) -M_n(k-1)\big|^2 \Big] \to 0,
	\end{equation}
	\begin{equation} 
	\label{lem:cond(2)}
	n^{-2/3}\,\E\Big[\sup_{k \leq tn^{2/3}} \big|L_n (k) - L_n(k-1)\big| \Big] \to 0.
	\end{equation}
	\proof
	We separately consider the two cases in which the supremum of $ |M_n (k) -M_n(k-1)|$ is achieved in the positive or negative part:
\begin{equation}\begin{split}
	\E\Big[\sup_{k \leq t n^{2/3}} |M_n (k) -M_n(k-1)|^2 \Big]&= \E\Big[\max \Big\{\sup_{k \leq t n^{2/3}} \big((M_n (k) -M_n(k-1)\big)^+)^2, \\ &\qquad \sup_{k \leq t n^{2/3}} \big((M_n (k) -M_n(k-1)\big)^-)^2 \Big\}\Big].
	\end{split}\end{equation}	
We thus obtain

	\begin{equation}\begin{split}\label{eq:supsplit} 
	\E\Big[\sup_{k \leq t n^{2/3}} |M_n (k) -M_n(k-1)|^2 \Big]&\leq \E\Big[ \sup_{k \leq t n^{2/3}} \big((M_n (k) -M_n(k-1)\big)^+)^2\Big] \\ &\qquad +\E\Big[ \sup_{k \leq t n^{2/3}} \big((M_n (k) -M_n(k-1)\big)^-)^2 \Big].
	\end{split}\end{equation}
	
	We define the random variables
	
	\begin{equation}
	Z_1:=\Bin (m,p), \qquad Z_2:=\Bin (nZ_1,p).
	\end{equation}
	We note that, for every $v \in \Ucal$, 
	\begin{equation}\label{eq:balldom}
	Z_1\overset{d}{=}|\Ball_1(v)|,\quad Z_2\succeq |\Ball_2(v)|,
	\end{equation}
since every element in $\Ball_1(v)$ has at most $n-1$ other potential neighbours in $\Ucal$. Moreover, $\Ball_2(v)\supseteq \Ball_2 (v,\Opp (k-1)\cup \Dex (k-1))$, since $2$ is the minimal distance between vertices in $\Ucal$, so that, for every $k$, $X_n(k) \preceq Z_2-1$. By definition, $X_n(k)\geq -1$ deterministically for all $k$. We write
	
	\begin{equation}\label{eq:martdef}
	M_n(k)-M_n(k-1)=X_n(k)-\E[X_n(k)\mid \Fil_n(k-1)],
	\end{equation}
we obtain thus that almost surely
	\begin{equation}
	M_n(k)-M_n(k-1)\geq -1-\E[Z_2 -1] = -\E[Z_2 ],
	\end{equation}
	so that
	\begin{equation}
	\E\Big[ \sup_{k \leq t n^{2/3}} \big((M_n (k) -M_n(k-1)\big)^-)^2\Big]\leq \E[Z_2]^2.
	\end{equation}
We compute
	\begin{equation}
	\E[Z_2]=pn \E[Z_1]=p^2nm=1+o(1),
	\end{equation}
	and thus
	\begin{equation}\label{eq:negpart}
	\E\Big[ \sup_{k \leq t n^{2/3}} \big((M_n (k) -M_n(k-1)\big)^-)^2\Big]\leq 1+o(1).
	\end{equation}
On the other hand, we see that, since 
\begin{equation}
(X_n(k)\mid \Fil_n(k-1))\preceq Z_2-1,\quad\E[X_n(k)\mid \Fil_n(k-1)]\geq-1, \quad a.s,
\end{equation}
we obtain, by \eqref{eq:martdef}
\begin{equation}
\big(M_n(k)-M_n(k-1)\mid \Fil_n(k-1)\big)\preceq Z_2-1+1=Z_2, \quad a.s.,
\end{equation}
and consequently, since $Z_2$ is a.s. non-negative,
\begin{equation}
\big((M_n(k)-M_n(k-1))^+\mid \Fil_n(k-1)\big)\preceq Z_2,\quad a.s..
\end{equation}
Note that here and in the rest of the paper we treat the conditional distributions as random variables which take values in the space of real-valued random variables, which is naturally equipped with the partial order given by stochastic domination.
  Thus, we can stochastically dominate the sequence
	$\big((M_n(k)-M_n(k-1))^+\big)_{k \geq 1}$ with a sequence $(Z_2(k))_{k\geq 1}$ of i.i.d. random variables with the same distribution as $Z_2$.
We thus obtain from \eqref{eq:supsplit} and \eqref{eq:negpart} that

	\begin{equation}\begin{split}\label{eq:twotermsz}
	\E\Big[\sup_{k \leq t n^{2/3}} |M_n (k) -M_n(k-1)|^2 \Big]\leq \E\Big[\sup_{k \leq t n^{2/3}} Z_2(k)^2\Big]+O(1).
	\end{split}\end{equation}
 For \eqref{lem:cond(1)}, we thus need to prove that

	\begin{equation}\label{eq:maxz}
	\E\Big[\sup_{k \leq t n^{2/3}} Z_2(k)^2\Big]=o(n^{2/3}).
	\end{equation}
We define for any $\varepsilon, k >0$, the events

	\begin{equation}
	\Ical_\varepsilon(k):= \{ Z_2(k) \geq \varepsilon n^{1/3}\}, \quad \Ical_\varepsilon:= \Big\{\bigcup_{k\leq tn^{2/3}} \Ical_\varepsilon(k)\Big\}.
	\end{equation}
 We write
	\begin{equation}\label{eq:isplit}
	\E\Big[\sup_{k \leq t n^{2/3}} Z_2(k)^2\Big]=\E\Big[\sup_{k \leq t n^{2/3}} Z_2(k)^2\indi_{\Ical_\varepsilon^c}\Big]+\E\Big[\sup_{k \leq t n^{2/3}} Z_2(k)^2\indi_{\Ical_\varepsilon}\Big].
	\end{equation}
By definition of $\Ical_\varepsilon$,
	\begin{equation}\label{eq:eni}
	\E\Big[\sup_{k \leq t n^{2/3}} Z_2(k)^2\indi_{\Ical_\varepsilon^c}\Big]\leq n^{2/3}\varepsilon^2.
	\end{equation}
To bound the second term, we write,

	\begin{equation}\label{eq:sumsup}\begin{split}
	\E\Big[\sup_{k \leq t n^{2/3}} Z_2(k)^2\indi_{\Ical_\varepsilon}\Big]\leq \E\Big[ \sum_{k \leq t n^{2/3}} Z_2(k)^2\indi_{\Ical_\varepsilon(k)}\Big]=tn^{2/3}\E[ Z_2(k)^2\indi_{\Ical_\varepsilon(k)}],
	\end{split}\end{equation}
where the last term actually does not depend on $k$ due to the i.i.d. nature of the variables $Z_2(k)$.
Thus, we compute, for every $k$,

	\begin{align}
	\E[Z_2(k)^4]&\geq \E[Z_2(k)^4\indi_{\Ical_\varepsilon (k)}]= \E[Z_2(k)^4\mid \Ical_\varepsilon (k)]\P(\Ical_\varepsilon (k))
	\\&\geq \E[Z_2(k)^2\mid \Ical_\varepsilon (k)]^2\P(\Ical_\varepsilon (k)) =\E[Z_2(k)^2\indi_{\Ical_\varepsilon (k)}]\E[Z_2(k)^2\mid \Ical_\varepsilon (k)],\nonumber
	\end{align}
so that
	\begin{equation}\label{eq:mom4}
	\E[Z_2(k)^2\indi_{\Ical_\varepsilon (k)}]\leq \dfrac{\E[Z_2(k)^4]}{\E[Z_2(k)^2\mid \Ical_\varepsilon (k)]}\leq \dfrac{\E[Z_2(k)^4]}{n^{2/3}\varepsilon^2}=\dfrac{\E[Z_2^4]}{n^{2/3}\varepsilon^2}.
	\end{equation}
Thus, we need an upper bound on the fourth moment of $Z_2$.
We recall that $Z_2$ can be written as 

	\begin{equation}
	Z_2=\sum_{i=1}^{Z_1} Y_i, \quad (Y_i)_{i=1}^{Z_1} \text{ i.i.d. with distribution }\Bin(n,p),
	\end{equation}
or, equivalently, since $Z_1\overset{d}{=}\Bin (m,p)$,

	\begin{equation}
	Z_2= \sum_{i=1}^{m} Y_iW_i,  \quad  (W_i)_{i=1}^m\text{ i.i.d. with distribution } \Ber(p),
	\end{equation}
where the sequences $(Y_i)_{i=1}^{m}$ and $(W_i)_{i=1}^m$ are independent of each other.
Thus, we can write,
	\begin{equation}
	\E[Z_2^4]=\sum_{i_1,i_2,i_3,i_4\in [m]^4} \E[W_{i_1}W_{i_2}W_{i_3}W_{i_4}]\E[Y_{i_1}Y_{i_2}Y_{i_3}Y_{i_4}].
	\end{equation}
We divide the sum in five different terms, based on how many of the indices coincide. We analyse case by case:
	\begin{equation}\begin{split}
	\E[Z_2^4]\leq&\Big(\sum_{|\{i_1,i_2,i_3,i_4\}|=4}p^4\E[Y_{i_1}]\E[Y_{i_2}]\E[Y_{i_3}]\E[Y_{i_4}]+ 6\sum_{|\{i_1,i_2,i_3\}|=3} p^3\E[Y_{i_1}^2]\E[Y_{i_2}]\E[Y_{i_3}]\\
	 \quad &+\sum_{i_1\neq i_2} p^2(3\E[Y_{i_1}^2]\E[Y_{i_2}^2]+4\E[Y_{i_1}^3]\E[Y_{i_2}])+\sum_{i_1} p\E[Y_{i_1}^4]\Big)(1+o(1)).
	\end{split}\end{equation}
We use that $mnp^2=1+o(1)$ and that, when $np\to 0$, $\E[\Bin(n,p)^j]=np(1+o(1))$ for all $j\geq 1$, to obtain

	\begin{equation}\label{eq:15}
	\E[Z_2^4]=\Big((mnp^2)^4+6(mnp^2)^3+7(mnp^2)^2+mnp^2\Big)(1+o(1))=15+o(1).
	\end{equation}

We can now substitute \eqref{eq:15} into \eqref{eq:mom4} and then into \eqref{eq:sumsup} to obtain

	\begin{equation}\label{eq:ei}
	\E\Big[\sup_{k \leq t n^{2/3}} Z_2(k)^2\indi_{\Ical_\varepsilon}\Big]\leq tn^{2/3}\dfrac{15(1+o(1))}{n^{2/3}\varepsilon^2}=15t\varepsilon^{-2} +o(1)=O(1).
	\end{equation}
Substituting \eqref{eq:eni} and \eqref{eq:ei} into \eqref{eq:isplit} we prove \eqref{eq:maxz}.
From \eqref{eq:maxz} and \eqref{eq:twotermsz} we finally obtain \eqref{lem:cond(1)}. 

To prove \eqref{lem:cond(2)} we use that

	\begin{equation}
	L_n(k)-L_n(k-1)=\Var (X_n(k)\mid \Fil_n(k-1))=\Var (X_n(k)+1\mid \Fil_n(k-1)).
	\end{equation}
Since, for all $k$, $0\leq (X_n(k)+1\mid \Fil_n(k-1)))\preceq Z_2$ almost surely, we obtain that

	\begin{equation}
	\Var(X_n(k)\mid \Fil_n(k-1))\leq \E[X_n(k)^2\mid \Fil_n(k-1)] \leq \E[Z_2^2].
	\end{equation}
	and consequently
	\begin{equation}
	n^{-2/3}\,\E\Big[\sup_{k \leq tn^{2/3}} |L_n (k) - L_n(k-1)| \Big]\leq n^{-2/3} \E[Z_2^2].
	\end{equation}
We write
	\begin{equation}
	\E[Z_2^2]\leq \E[Z_2^4]^{1/2}=O(1),
	\end{equation}
using \eqref{eq:15}, so that \eqref{lem:cond(2)} follows. \qed
\end{lem}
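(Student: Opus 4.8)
The plan is to derive both continuity conditions from a single deterministic stochastic-domination bound on the increments $X_n(k)$ of $\big(S_n^\lambda(k)\big)_{k\ge0}$. Since the exploration starts from $\Ucal$, the vertex $v_k$ being explored lies in $\Ucal$, so $\Ball_1(v_k)$ is a random subset of $\Wcal$ of size distributed as $\Bin(m,p)$, and each community found there contributes at most $n-1$ new individuals in $\Ucal$; hence, setting $Z_1:=\Bin(m,p)$ and $Z_2:=\Bin(nZ_1,p)$, one has $X_n(k)+1\preceq Z_2$ for every $k$, while $X_n(k)\ge-1$ deterministically (because $2$ is the minimal distance between two vertices of $\Ucal$, so no removal of already-seen vertices can make $\Ball_2(v_k,\cdot)$ larger than $\Ball_2(v_k)$). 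The first thing I would do is collect the moment estimates for $Z_2$: writing $Z_2\deq\sum_{i=1}^m W_iY_i$ with $W_i\sim\Ber(p)$ and $Y_i\sim\Bin(n,p)$ all independent, and using that $np\to0$ (so $\E[Y_i^j]=np(1+o(1))$ for every $j\ge1$) together with $mnp^2=1+o(1)$, a computation splitting the fourfold sum according to how many of the four indices coincide gives $\E[Z_2]=1+o(1)$ and $\E[Z_2^4]=(mnp^2)^4+6(mnp^2)^3+7(mnp^2)^2+mnp^2+o(1)=15+o(1)$; in particular $\E[Z_2^2]\le\E[Z_2^4]^{1/2}=O(1)$.

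Condition (2) is then immediate: the predictable increment equals $\Var(X_n(k)\mid\Fil_n(k-1))\le\E[(X_n(k)+1)^2\mid\Fil_n(k-1)]\le\E[Z_2^2]=O(1)$ almost surely, so $n^{-2/3}\,\E\big[\sup_{k\le tn^{2/3}}|L_n(k)-L_n(k-1)|\big]\le n^{-2/3}\E[Z_2^2]\to0$.

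For condition (1) I would split the martingale increment $M_n(k)-M_n(k-1)=X_n(k)-\E[X_n(k)\mid\Fil_n(k-1)]$ into positive and negative parts. The negative part is controlled deterministically: $M_n(k)-M_n(k-1)\ge-1-\E[Z_2-1]=-\E[Z_2]$, whence $\E\big[\sup_k\big((M_n(k)-M_n(k-1))^-\big)^2\big]\le\E[Z_2]^2=1+o(1)$. For the positive part, the conditional law of $(M_n(k)-M_n(k-1))^+$ is stochastically dominated by $Z_2$, so one may replace the whole sequence by i.i.d.\ copies $(Z_2(k))_{k\ge1}$ of $Z_2$, and it suffices to prove $\E\big[\sup_{k\le tn^{2/3}}Z_2(k)^2\big]=o(n^{2/3})$. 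I would obtain this by truncating at level $\varepsilon n^{1/3}$: on the event that every $Z_2(k)$ with $k\le tn^{2/3}$ stays below $\varepsilon n^{1/3}$ the supremum is at most $\varepsilon^2 n^{2/3}$, while on its complement one bounds the supremum by $\sum_{k\le tn^{2/3}}Z_2(k)^2\indi\{Z_2(k)\ge\varepsilon n^{1/3}\}$ and estimates each term via the conditional Jensen inequality $\E[Z_2^4\indi_A]\ge\E[Z_2^2\mid A]\,\E[Z_2^2\indi_A]$ with $A=\{Z_2(k)\ge\varepsilon n^{1/3}\}$, so that $\E[Z_2^2\indi_A]\le\E[Z_2^4]/(\varepsilon^2 n^{2/3})$ and the whole sum is $O_\varepsilon(1)$. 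Combining, $n^{-2/3}\,\E[\sup_k Z_2(k)^2]\le\varepsilon^2+o(1)$, and sending $\varepsilon\downarrow0$ after $n\to\infty$ gives \eqref{lem:cond(1)}.

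The main obstacle is precisely this positive-part maximal estimate: the naive bound $\E[\sup_{k\le tn^{2/3}}Z_2(k)^2]\le tn^{2/3}\,\E[Z_2^2]$ only yields $O(n^{2/3})$, not $o(n^{2/3})$, so the truncation combined with the uniform fourth-moment bound $\E[Z_2^4]=O(1)$ is genuinely needed. The structural reason the argument works — and the reason the exploration is run from $\Ucal$ (the smaller side) when $\alpha>1$ — is that a binomial-of-binomial whose two parameters are both $o(1)$ still has uniformly bounded moments of every fixed order; starting from the larger side instead would make $\Var(|\Ball_2(v_k)|)\to\infty$ and this step would fail. Everything else in the proof is routine once $\E[Z_2^4]=O(1)$ is in hand.
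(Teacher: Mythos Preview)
Your proposal is correct and follows essentially the same approach as the paper: the same stochastic domination by $Z_2=\Bin(nZ_1,p)$, the same positive/negative split of the martingale increment, the same truncation at level $\varepsilon n^{1/3}$ combined with the fourth-moment bound $\E[Z_2^4]=15+o(1)$ (computed in the same way via $Z_2=\sum_i W_iY_i$), and the same one-line argument for \eqref{lem:cond(2)} via $\Var(X_n(k)\mid\Fil_n(k-1))\le\E[Z_2^2]=O(1)$. Your remark about why the naive $\sup\le\sum$ bound fails and the fourth moment is genuinely needed is exactly the point the paper's argument hinges on.
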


\subsection{Parabolic drift condition}
\label{sec:parabolic}
Before proving the parabolic drift condition and the linear variance condition we now prove Lemma \ref{lem:concopp}, which, other than being necessary to derive from Theorem \ref{the:brownian} results relative to the case $\alpha <1$, is also relevant to the proof of the last two conditions required for the application of the MFCLT.	

\begin{proof}[Proof of Lemma \ref{lem:concopp}]
We start the proof with an upper bound on $Q(k)$. By Definition \ref{def:exploration},

\begin{equation}\label{eq:distopp}
\big( Q(k)-Q(k-1)\mid \Fil_n(k-1)\big)\overset{d}{=}\Bin (m-Q(k-1),p).
\end{equation} 
Consequently, for each $k$, almost surely

	\begin{equation}
	\big( Q(k)-Q(k-1)\mid \Fil_n(k-1)\big)\preceq Z_1,
	\end{equation}
so that 
	\begin{equation}
	Q(k)\preceq \sum_{j=1}^k  Z_1(j),
	\end{equation}
where $(Z_1(j))_{j\geq 1}$ are i.i.d. random variables with the same distribution as $Z_1$. We thus obtain
	\begin{equation}\label{eq:upperp}
	\P\Big(\dfrac{Q(tn^{2/3})}{m^{1/2}n^{1/6}}-t>\varepsilon\Big)\leq \P\Big(\dfrac{\sum_{j=1}^{tn^{2/3}}  Z_1(j)}{ m^{1/2}n^{1/6}}-t>\varepsilon\Big).
	\end{equation}
We compute
	\begin{equation}\label{eq:expP}
	\E\Big[\sum_{j=1}^k  Z_1(j)\Big]=k\E[Z_1]=kpm=k\sqrt{\dfrac{m}{n}}(1+o(1)),
	\end{equation}
while 
	\begin{equation}\label{eq:varP}
	\Var\Big(\sum_{j=1}^k  Z_1(j)\Big) =k \Var(Z_1)=kp(1-p)m=k\sqrt{\dfrac{m}{n}}(1+o(1)).
	\end{equation}
	From \eqref{eq:expP} we know that for every $t, \varepsilon >0$, there is $\overline n$ large enough that
	\begin{equation}
	\dfrac{\E\big[\sum_{j=1}^{tn^{2/3}}  Z_1(j)\big]}{m^{1/2}n^{1/6}}-t<\varepsilon/2, \quad \forall n > \overline n.
	\end{equation}
	Consequently, 
		\begin{equation}\begin{split}
	\limsup_{n \to \infty}\P&\Big(\dfrac{\sum_{j=1}^{tn^{2/3}}  Z_1(j)}{m^{1/2}n^{1/6}}-t>\varepsilon\Big)\\&\leq \limsup_{n \to \infty}\P\Big(\dfrac{\sum_{j=1}^{tn^{2/3}}  Z_1(j)- \E\big[\sum_{j=1}^{tn^{2/3}}  Z_1(j)\big]}{m^{1/2}n^{1/6}}>\varepsilon/2\Big).
	\end{split}\end{equation}
Thus, by the second moment method, we obtain from \eqref{eq:upperp} that
	\begin{equation}\label{eq:upopp}
	\P\Big(\dfrac{Q(tn^{2/3})}{m^{1/2}n^{1/6}}-t>\varepsilon\Big)\leq \P\Big(\dfrac{\sum_{j=1}^{tn^{2/3}}  Z_1(j)}{ m^{1/2}n^{1/6}}-t>\varepsilon\Big) 	\to 0.
	\end{equation}
Now we prove the matching lower bound. We define the random variable $\widetilde Z_1 \overset{d}{=} \Bin (m-(t+\varepsilon) m^{1/2}n^{1/6},p))$ and note that, by \eqref{eq:distopp}, for all $k \leq tn^{2/3}$,
	\begin{equation}
	(Q(k)-Q(k-1)\mid \Fil_n(k-1))\succeq \widetilde Z_1,
	\end{equation}
when
	\begin{equation}\label{eq:uniupopp}
	Q(tn^{2/3}-1) \leq (t+\varepsilon) m^{1/2}n^{1/6}.
	\end{equation}
 By  \eqref{eq:upopp}, we obtain that \eqref{eq:uniupopp} is satisfied w.h.p.. Consequently, there exists a coupling between $(Q(j))_{j \geq 1}$ and a sequence of i.i.d. random variables $(\widetilde Z_1(j))_{j\geq 1}$ with the same distribution as $\widetilde Z_1$ such that,
 \begin{equation}
 \lim_{n \to \infty} \P(\exists j< tn^{2/3}: Q(j)-Q(j-1)<\widetilde Z_1(j))=0.
 \end{equation}
We thus deduce that
	\begin{equation}
	\limsup_{n\to \infty}\P\Big(\dfrac{Q(tn^{2/3})}{m^{1/2}n^{1/6}}-t<-\varepsilon\Big)\leq \limsup_{n\to \infty}\P\Big(\dfrac{\sum_{j=1}^{tn^{2/3}}  \widetilde Z_1(j)}{ m^{1/2}n^{1/6}}-t<-\varepsilon\Big).
	\end{equation}
%
%	\begin{equation}
%	Q(k)\succeq \sum_{j=1}^k \widetilde Z_1(j),
%	\end{equation}
We compute
	\begin{equation}
	\E\Big[\sum_{j=1}^k \widetilde Z_1(j)\Big]=k(m-(t+\varepsilon) m^{1/2}n^{1/6})p=k\sqrt{\dfrac{m}{n}}(1+o(1)), 
\end{equation}
and
\begin{equation}
 \Var\Big(\sum_{j=1}^k \widetilde Z_1(j)\Big) =k(m-(t+\varepsilon) m^{1/2}n^{1/6})p(1-p)= k\sqrt{\dfrac{m}{n}}(1+o(1)).
\end{equation}
We can thus prove the matching lower bound to \eqref{eq:upopp}, again by the second moment method:
	\begin{equation}
	\limsup_{n\to \infty}\P\Big(\dfrac{Q(tn^{2/3})}{m^{1/2}n^{1/6}}-t<-\varepsilon\Big)\leq \limsup_{n\to \infty}\P\Big(\dfrac{\sum_{j=1}^{tn^{2/3}} \widetilde Z_1(j)}{ m^{1/2}n^{1/6}}-t<-\varepsilon\Big)= 0.
	\end{equation}
This proves pointwise convergence of the process $\Big(\dfrac{Q(tn^{2/3})}{m^{1/2}n^{1/6}}\Big)_{t\geq 0}$, to the identity function. The uniform convergence follows in a standard way from the fact that $(Q(tn^{2/3}))_{t \geq 0}$ is non-decreasing in $t$ and that the identity function is continuous.
\end{proof}

We also prove an upper bound on the size of $\Act(k)$, which will be used in the proofs of the last two conditions to control the effect of the depletion of points on the distribution of $X_n(k)$:

\begin{lem}[Upper bound on the size of the active set]\label{lem:supactive} Consider the two-step exploration process on $K_p(n,m)$, from Definition \ref{def:exploration}, starting from $v_0 \in \Ucal$, with $p= p_c (1+\lambda n^{-1/3})$ and $m=n^\alpha$ for some $\lambda \in \mathbb{R}, \alpha > 1$. Then, as $n\to \infty$,  for every $t\in (0,\infty)$,
	
	\begin{equation}
	n^{-2/3}\sup_{k\leq tn^{2/3}} A(k) \pto 0.
	\end{equation}

\end{lem}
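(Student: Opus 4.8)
The plan is to show that with high probability the reflected process $\big(R_n^\lambda(k)\big)_{k\leq tn^{2/3}} = \big(A(k)\big)_{k\leq tn^{2/3}}$ never gets large, by comparing the active set to the excursion structure of the underlying walk. Recall from the discussion after \eqref{eq:adapted} that $R_n^\lambda(k) = A(k)$, so it suffices to bound $\sup_{k\leq tn^{2/3}} R_n^\lambda(k)$. The natural route is to invoke the already-established convergence in \eqref{eq:reflectconvergence}: $\big(\overline R_n^\lambda(s)\big)_{s\geq 0} \dto \big(\overline R^\lambda(s)\big)_{s\geq 0}$ in the $J_1$-topology, where the limit $\overline R^\lambda$ is a reflected Brownian motion with parabolic drift, which is almost surely continuous and finite on $[0,t]$. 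However, this creates a circularity: \eqref{eq:reflectconvergence} follows from Theorem \ref{the:brownian}, whose proof uses the present lemma in verifying conditions (3) and (4) of the MFCLT. So the plan must avoid Theorem \ref{the:brownian} and argue directly.

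First I would set up a direct stochastic-domination bound on the increments of $S_n^\lambda$. As established in the proof of Lemma \ref{lem:12}, for every $k$ we have $(X_n(k)\mid \Fil_n(k-1)) \preceq Z_2 - 1$ with $Z_2 = \Bin(nZ_1, p)$, $Z_1 = \Bin(m,p)$, and $X_n(k) \geq -1$ deterministically; moreover $\E[Z_2] = 1+o(1)$ and all moments of $Z_2$ are bounded (from \eqref{eq:15}, $\E[Z_2^4] = 15 + o(1)$). Thus $S_n^\lambda$ is dominated by a random walk $\widetilde S_n(k) := 1 + \sum_{j=1}^k (\widetilde Z_2(j) - 1)$ with $\widetilde Z_2(j)$ i.i.d. copies of $Z_2$. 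Since $\E[\widetilde Z_2(j) - 1] = o(1)$ and these increments have bounded variance $\sigma^2 = O(1)$ (indeed $\Var(Z_2) = \E[Z_2^2] - \E[Z_2]^2 = O(1)$), the walk $\widetilde S_n$ has step variance $\Theta(1)$ and negligible drift over $[0, tn^{2/3}]$, so by Doob's maximal inequality applied to the martingale part, $\P\big(\sup_{k\leq tn^{2/3}} \widetilde S_n(k) > \varepsilon n^{1/3}\big) \leq C t n^{2/3} \sigma^2 / (\varepsilon n^{1/3})^2 = O_t(\varepsilon^{-2} n^{-0}\cdot\,n^{-0})$ — wait, this gives $O(t\varepsilon^{-2})$, which is not small. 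The correct move is instead to note $\sup_{k\leq tn^{2/3}} \widetilde S_n(k) = O_{\P}(n^{1/3})$ by the invariance principle for $\widetilde S_n$ (which is a genuine, non-circular CLT for an i.i.d. walk), hence $n^{-2/3}\sup_{k\leq tn^{2/3}} \widetilde S_n(k) \pto 0$ at rate $n^{-1/3}$.

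The key step is then to transfer this bound from $\widetilde S_n$ to $R_n^\lambda = A$. Because $S_n^\lambda$ is dominated step-by-step by the i.i.d. walk $\widetilde S_n$ in the sense that increments satisfy $X_n(k) \preceq \widetilde Z_2(k) - 1$ conditionally, one constructs a coupling on which $S_n^\lambda(k) - S_n^\lambda(j) \leq \widetilde S_n(k) - \widetilde S_n(j)$ for all $j \leq k$. Then $R_n^\lambda(k) = S_n^\lambda(k) - \min_{j<k} S_n^\lambda(j) + 1 \leq (\max_{j\leq k} \widetilde S_n(j)) - (\min_{j\leq k} \widetilde S_n(j)) + C$, and the range of $\widetilde S_n$ over $[0,tn^{2/3}]$ is $O_{\P}(n^{1/3})$ by the invariance principle (Donsker), so $n^{-2/3}\sup_{k\leq tn^{2/3}} R_n^\lambda(k) \pto 0$. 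The main obstacle is making the coupling between $S_n^\lambda$ and the i.i.d. dominating walk rigorous: $X_n(k)$ depends on the exploration history through the depleted sets $\Opp(k-1)\cup\Dex(k-1)$, so one must argue that conditionally on $\Fil_n(k-1)$ the increment is stochastically below $Z_2 - 1$ uniformly, and then apply a standard sequential coupling (e.g. quantile coupling) to realize all increments jointly on one probability space with the monotone pathwise inequality. A secondary subtlety is that, to later apply this in conditions (3)–(4), one wants not merely $\op(n^{2/3})$ but a usable rate; I would keep track of the fact that the bound is in fact $\Op(n^{1/3})$, which is what the downstream arguments need when controlling depletion corrections of size $A(k)/n$ or $A(k)/m$.
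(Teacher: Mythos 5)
Your proposal is correct and follows a genuinely different route from the paper. The paper writes
\[
\Big\{\sup_{k\leq tn^{2/3}}A(k)<\varepsilon n^{2/3}\Big\}\supseteq \big\{-\varepsilon n^{2/3}/3<S_n^\lambda(k)<\varepsilon n^{2/3}/3 \ \forall k\leq tn^{2/3}\big\},
\]
and then establishes both a one-sided upper bound (via $X_n(k)\preceq Z_2-1$ and Kolmogorov's inequality at the $\varepsilon n^{2/3}$ scale) \emph{and} a matching lower bound (via a depletion-corrected variable $Z_2^{(\varepsilon)}$, which requires invoking Lemma~\ref{lem:concopp} for the $Q$-bound and a first-moment estimate $D^+(tn^{2/3})=\Tp(n^{2/3})$ for the dead-plus-active set). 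Your observation is sharper: since $A(k)=R_n^\lambda(k)=1+\max_{j<k}\big(S_n^\lambda(k)-S_n^\lambda(j)\big)$ depends on $S_n^\lambda$ only through increment \emph{differences}, the one-sided pathwise coupling $X_n(i)\leq \widetilde Z_2(i)-1$ (obtained by sequential quantile coupling from the a.s.\ conditional domination $(X_n(i)\mid\Fil_n(i-1))\preceq Z_2-1$) already gives $R_n^\lambda(k)\leq \widetilde R_n(k)$ for all $k$, where $\widetilde R_n$ is the reflected version of the dominating i.i.d.\ walk. The sup of $\widetilde R_n$ over $[0,tn^{2/3}]$ is $\Op(n^{1/3})$, either by the (triangular-array) invariance principle as you suggest, or even more elementarily by splitting $\widetilde S_n$ into its drift $k\mu_n=O(n^{1/3})$ and its martingale part and applying Doob's $L^2$ inequality at the $Kn^{1/3}$ scale. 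This removes all need for the lower-bound side and thus for Lemma~\ref{lem:concopp} and the $D^+$ estimate within this lemma, and it yields the sharper quantitative conclusion $\sup_{k\leq tn^{2/3}}A(k)=\Op(n^{1/3})$ rather than merely $\op(n^{2/3})$. The paper's route is more self-contained in a different sense — it relies only on second moments and Kolmogorov's inequality, and the depletion bookkeeping it performs is reused downstream in the drift and variance conditions — but yours is cleaner for the statement at hand. One cosmetic note: the first half of your write-up, where you apply Doob at the $\varepsilon n^{1/3}$ scale and obtain the uninformative $O(t\varepsilon^{-2})$, should simply be deleted; the subsequent argument stands on its own.
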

\proof We recall that
	\begin{equation}
	 A(k)= R_n^\lambda(k)=S_n^\lambda(k)-\min_{j< k}S_n^\lambda(j)+1.
	\end{equation}
From this we obtain
	\begin{equation}\label{eq:supactive}
	\Big\{ \sup_{k\leq tn^{2/3}} A(k) < \varepsilon n^{2/3} \Big\} \supseteq \big\{-\varepsilon n^{2/3}/3 < S_n^\lambda(k) < \varepsilon n^{2/3} /3 \ \forall k \leq tn^{2/3} \big\}.
	\end{equation}
For the upper bound, we already argued from \eqref{eq:balldom} that $(S_n^\lambda(k)-S_n^\lambda(k-1)\mid \Fil_n(k-1))\preceq Z_2-1$ for all $k$ a.s., and consequently,

	\begin{equation}
S_n^\lambda(k) \preceq \sum_{j=1}^k (Z_2(j)-1),
	\end{equation}
where $(Z_2(j))_{j \geq 1}$ are i.i.d. random variables with distribution identical to $Z_2$. 
We now bound, uniformly for all $k\leq tn^{2/3}$,
	\begin{equation}
	\E\Big[\sum_{j=1}^k( Z_2(j)-1)\Big] =  2k \lambda n^{-1/3}(1+o(1))= \Theta (n^{1/3}),
	\end{equation}
	and
	\begin{equation}
	\Var\Big(\sum_{j=1}^k( Z_2(j)-1)\Big)\leq k \E[Z_2^2]= k O(1)=O(n^{2/3}).
	\end{equation}
From this we conclude that, by Kolmogorov's inequality (see e.g. \cite[Theorem 7.8.2]{GriSti92}),
	\begin{equation}\label{eq:actupp}\begin{split}
	\P\Big(\max_{k \leq tn^{2/3}}S_n^\lambda(k)> \varepsilon n^{2/3}/3\Big)&\leq \P \Big(\max_{k \leq tn^{2/3}}\sum_{j=1}^k (Z_2(j)-1)> \varepsilon n^{2/3}/3\Big)\\ &\leq \frac{\Var\Big(\sum_{j=1}^{tn^{2/3}}
	 (Z_2(j)-1)\Big)}{\Big(\varepsilon n^{2/3}/3-\max_{k \leq tn^{2/3}}\sum_{j=1}^k \E[(Z_2(j)-1)]\Big)^2}\\
&=\frac{O(n^{2/3})}{\varepsilon^2 n^{4/3} (1-o(1))/9}\to 0.
	\end{split}\end{equation}

For the matching lower bound, we note that 
	\begin{equation}\label{eq:doulbebin}
	X_n(k) \overset{d}{=}\Bin \Big((n-D^+(k-1))\Bin \big(m-Q(k-1),p\big),p\Big),
	\end{equation}
	since $v^k$ has $m-Q(k-1)$ potential neighbours available to be explored in $\Wcal$ and each of them in turn has $n-D^+(k-1)$ potential neighbours in $\Ucal$ and all edges are present with probability $p$ independently of each other.
We proved in Lemma \ref{lem:concopp} that $\max_{k \leq tn^{2/3}} Q(k)=\Tp(m^{1/2}n^{1/6})$. Moreover, by Definition \ref{def:exploration}, $\Dex(k)\setminus\Dex (k-1)\subseteq \Ball_2(v^k) \cup \{v^k \}$, so that
	\begin{equation}
	D^+(k)-D^+(k-1)\preceq Z_2+1.
	\end{equation}
Therefore, 
	\begin{equation}
	\E[D^+(k)]\leq k\E[Z_2+1]=2k(1+o(1)),
	\end{equation}
so that
	\begin{equation}\label{eq:expupb}
	D^+(tn^{2/3})=\Tp (n^{2/3}),
	\end{equation}
by the first moment method.
We define 
	\begin{equation}
	Z_2^{(\varepsilon)}\sim \Bin \Big((n-n^{2/3 +\varepsilon})\Bin \big(m-m^{2/3+\varepsilon},p)\big),p\Big).
	\end{equation}
From \eqref{eq:doulbebin} we see that
	\begin{equation}
	\big( X_n(k)\mid \Fil_n(k-1)\big) \succeq Z_2^{(\varepsilon)}, \quad \forall k\leq tn^{2/3} ,
	\end{equation}
when
	\begin{eqnarray}\label{eq:doubleup}
	Q(tn^{2/3}-1) \leq m^{2/3+\varepsilon},\quad  D^+(tn^{2/3}-1) \leq n^{2/3+\varepsilon} .
	\end{eqnarray}
By \eqref{eq:expupb} and \eqref{eq:upopp}, \eqref{eq:doubleup} is satisfied w.h.p. for every $t,\varepsilon >0$. Therefore, there exists a coupling between $(X_n(j))_{j\geq 1}$ and a sequence $(Z_2^{(\varepsilon)}(j))_{j \geq 1}$ of i.i.d. random variables with distribution identical to $Z_2^{(\varepsilon)}$ such that
	\begin{equation}
	\lim_{n \to \infty} \P(\exists j<tn^{2/3} : X_n(j)<Z_2^{(\varepsilon)}(j))=0.
	\end{equation}
	Consequently,
	\begin{equation}\label{eq:minzgeq}\begin{split}
	\limsup_{n \to \infty}&\ \P\Big(\min_{k\leq tn^{2/3}}S_n^\lambda(k)\leq -\varepsilon n^{2/3}/3\Big)\\ &\leq \limsup_{n \to \infty}\P\Big( \min_{k\leq tn^{2/3}} \sum_{j=1}^k (Z_2^{(\varepsilon)}(j)-1)\leq -\varepsilon n^{2/3}/3\Big),
	\end{split}\end{equation}
We next compute, for all $k\leq tn^{2/3}$,
	\begin{equation}
\E\Big[\sum_{j=1}^k( Z_2^{(\varepsilon)}(j)-1)\Big]=k p^2(m-m^{2/3+\varepsilon})(n-n^{2/3+\varepsilon})= \Theta (n^{1/3+ \varepsilon}),
\end{equation}
and
\begin{equation}
\Var\Big(\sum_{j=1}^k( Z_2^{(\varepsilon)}(j)-1)\Big)\leq k \E[(Z_2^{(\varepsilon)})^2]\leq k \E[Z_2^2]= k O(1)=O(n^{2/3}),
\end{equation}
so that, again, by Kolmogorov's inequality  (see e.g. \cite[Theorem 7.8.2]{GriSti92}), 
	\begin{equation}\begin{split}
	\P \Big(\min_{k \leq tn^{2/3}}&\sum_{j=1}^k (Z_2^{(\varepsilon)}(j)-1)< -\varepsilon n^{2/3}/3\Big)\\ &\leq \frac{\Var\Big(\sum_{j=1}^{tn^{2/3}} (Z_2^{(\varepsilon)}(j)-1)\Big)}{\Big(\min_{k \leq tn^{2/3}}\sum_{j=1}^k \E[(Z_2^{(\varepsilon)}(j)-1)]+\varepsilon n^{2/3}/3\Big)^2}\\
&=\frac{O(n^{2/3})}{\varepsilon^2 n^{4/3} (1-o(1))/9}\to 0.,
	\end{split}\end{equation}
 and, finally, by \eqref{eq:minzgeq}, for every $\varepsilon>0$,
	\begin{equation}\label{eq:actlow}
	\limsup_{n \to \infty}\ \P\Big(\min_{k\leq tn^{2/3}}S_n^\lambda(k)\leq -\varepsilon n^{2/3}/3\Big)=0.
	\end{equation}
Combining upper and lower bounds from \eqref{eq:actupp}  and \eqref{eq:actlow}, and using \eqref{eq:supactive}, the claim follows. \qed

\medskip

Now we can prove the two lemmas that establish the last two conditions for the application of the MFCLT, starting with the parabolic drift condition:

\begin{prop}[Parabolic drift condition]\label{lem:parabolic}
Consider the Doob's decomposition of the process $\big(S_n^\lambda (k)\big)_{k\geq 0}$ adapted to the two-step exploration process from Definition \ref{def:exploration}, starting from $v_0 \in \Ucal$, with $p= p_c (1+\lambda n^{-1/3})$ and $m=n^\alpha$, for some $\lambda \in \mathbb{R}, \alpha > 1$. Then as $n\to \infty$, for every $t \in (0,\infty)$,

	\begin{equation}
	\label{lem:cond(4)}
	n^{-1/3}\, \sup_{k \leq tn^{2/3}} \left| Y_n(k) - \frac{2\lambda k}{ n^{1/3} }
	+ \dfrac{k^2}{2n} \right| \pto 0.
	\end{equation}
\end{prop}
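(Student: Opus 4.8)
The plan is to compute the conditional drift $\E[X_n(k)\mid\Fil_n(k-1)]$ of the increments of $\big(S^\lambda_n(k)\big)_{k\ge 0}$, sum it over $j\le k$ to get $Y_n(k)=\sum_{j=1}^{k}\E[X_n(j)\mid\Fil_n(j-1)]$, and then show that the leading part of this sum is exactly $\tfrac{2\lambda k}{n^{1/3}}-\tfrac{k^2}{2n}$ while everything else is $\op(n^{1/3})$, uniformly for $k\le tn^{2/3}$. The term $\tfrac{2\lambda k}{n^{1/3}}$ will be produced by the binomial expansion of $p^2$, the term $-\tfrac{k^2}{2n}$ by the depletion of the available vertices on the $\Ucal$-side ($D^+(k-1)\approx k$), and the whole point of the restriction $\alpha>1$ is that the depletion on the $\Wcal$-side (governed by $Q$) is then subdominant at this scale.

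For the conditional drift: conditionally on $\Fil_n(k-1)$, the vertex $v_k$ has $m-Q(k-1)$ still-available potential neighbours in $\Wcal$, and a given $u\in\Ucal\setminus\Dex(k-1)$ (there are $n-D^+(k-1)$ of them) lies in $\Ball_2(v_k,\Opp(k-1)\cup\Dex(k-1))$ iff it is adjacent to at least one neighbour of $v_k$; conditioning further on the neighbourhood of $v_k$ these events are independent over $u$ (cf.\ \eqref{eq:doulbebin}), so that $\E\big[|\Ball_2(v_k,\Opp(k-1)\cup\Dex(k-1))|\;\big|\;\Fil_n(k-1)\big]=(n-D^+(k-1))\big(1-(1-p^2)^{m-Q(k-1)}\big)$. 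Using $1-(1-p^2)^M=Mp^2+O((Mp^2)^2)$ together with $mp^2=n^{-1}(1+o(1))$, and recalling $X_n(k)=|\Ball_2(\cdots)|-1$, this gives
\[
\E[X_n(k)\mid\Fil_n(k-1)]=p^2(n-D^+(k-1))(m-Q(k-1))-1+O(n^{-1}),
\]
with a deterministic $O(n^{-1})$ error. Now expand $p^2=\tfrac{1}{nm}(1+2\lambda n^{-1/3}+\lambda^2 n^{-2/3})$, write $p^2(n-D^+(k-1))(m-Q(k-1))=(1+2\lambda n^{-1/3}+\lambda^2 n^{-2/3})\bigl(1-\tfrac{D^+(k-1)}{n}\bigr)\bigl(1-\tfrac{Q(k-1)}{m}\bigr)$, and multiply out: the term $1$ cancels the $-1$, leaving the terms $\tfrac{2\lambda}{n^{1/3}}$, $-\tfrac{D^+(k-1)}{n}$, $-\tfrac{Q(k-1)}{m}$, and a remainder made of products of at least two of the small factors $2\lambda n^{-1/3},\ \lambda^2 n^{-2/3},\ \tfrac{D^+(k-1)}{n},\ \tfrac{Q(k-1)}{m}$. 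Summing over $j\le k$ yields
\[
Y_n(k)=\frac{2\lambda k}{n^{1/3}}-\frac1n\sum_{j=1}^{k}D^+(j-1)-\frac1m\sum_{j=1}^{k}Q(j-1)+\mathcal{R}_n(k),
\]
where $\mathcal{R}_n(k)$ collects the remainder and the $O(n^{-1})$ errors above.

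It then remains to control the three error pieces, and here I would use the a priori bounds already proved. Writing $D^+(j-1)=(j-1)+A(j-1)$ and invoking Lemma~\ref{lem:supactive} (so $\sup_{j\le tn^{2/3}}A(j)=\op(n^{2/3})$) gives $\tfrac1n\sum_{j=1}^{k}D^+(j-1)=\tfrac{k^2}{2n}+\op(n^{1/3})$ uniformly in $k\le tn^{2/3}$, which produces the parabolic term. By Lemma~\ref{lem:concopp} (or just the cruder bound $Q(k)\preceq\sum_{j\le k}Z_1(j)$ with $Z_1\sim\Bin(m,p)$ from its proof), $\sup_{j\le tn^{2/3}}Q(j)=\Op(m^{1/2}n^{1/6})$, whence $\tfrac1m\sum_{j=1}^{k}Q(j-1)=\Op(n^{5/6}m^{-1/2})=\op(n^{1/3})$ precisely because $\alpha>1$. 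Finally each summand of $\mathcal{R}_n(k)$ is either a deterministic $O(n^{-1})$, or $\lambda^2 n^{-2/3}$, or a product of two small factors bounded using the above estimates; summing over $j\le k\le tn^{2/3}$ and using $\alpha>1$ once more, $\mathcal{R}_n(k)=\op(n^{1/3})$ uniformly. Combining, $\sup_{k\le tn^{2/3}}\bigl|Y_n(k)-\tfrac{2\lambda k}{n^{1/3}}+\tfrac{k^2}{2n}\bigr|=\op(n^{1/3})$, which is exactly \eqref{lem:cond(4)}.

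I expect the main obstacle to be not any single estimate — each is a routine binomial computation — but the \emph{uniformity} in $k\le tn^{2/3}$: the drift at step $k$ depends on the random quantities $D^+(k-1)$ and $Q(k-1)$, so one genuinely needs the uniform controls of Lemmas~\ref{lem:supactive} and~\ref{lem:concopp} (this is why they are placed before this proposition), and one must make sure the accumulated remainder $\mathcal{R}_n(k)$ stays $\op(n^{1/3})$ after summing $\Theta(n^{2/3})$ many terms — which works only because $\alpha>1$ renders the $\Wcal$-side contribution negligible.
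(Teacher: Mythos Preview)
Your proposal is correct and follows essentially the same route as the paper: compute the conditional drift as $p^2(n-D^+(k-1))(m-Q(k-1))-1$ (plus a negligible $O(n^{-1})$ correction), extract $2\lambda n^{-1/3}$ from $p^2nm-1$, use Lemma~\ref{lem:supactive} to replace $D^+(k-1)$ by $k$ and produce the parabolic term, and use Lemma~\ref{lem:concopp} together with $\alpha>1$ to kill the $Q$-contribution. The only cosmetic difference is that the paper bounds the drift increment uniformly in $k$ and then sums, whereas you sum first and then bound---but the ingredients and the use of $\alpha>1$ are identical.
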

\proof
Recall from the Doob's decomposition of $(S_n^\lambda(k))_{k \geq 0}$ given in \eqref{eq:doob} that

	\begin{equation}
	Y_n(k)=\sum_{j=1}^k \E[X_n(j)\mid \Fil_n(j-1)].
	\end{equation} 
Consequently, to obtain the claim, we need to prove uniform convergence of the drift in the process at every time, i.e. that

	\begin{equation}\label{lem:drift}
	\max_{k\leq tn^{2/3}}\big|\E[X_n(k)\mid \Fil_n(k-1)]-(2\lambda -s)n^{-1/3} \big|= \op(n^{-1/3}).
	\end{equation}

First, we recall that $(|\Ball_1(v_k)\setminus \Opp(k-1)|\mid \Fil_n(k-1))$ has distribution $\Bin (m-Q(k-1),p)$, and that every element in $\Ball_1(v_k)\setminus \Opp(k-1)$ has $m-D^+(k-1)$ potential neighbours in $\Wcal \setminus \Dex (k-1)$, so that

	\begin{equation}
	\E[X_n(k)\mid \Fil_n(k-1)]=\E\Big[\Bin \Big(\big(n-D^+ (k-1)\big)\Bin \big(m-Q(k-1),p\big),p\Big)\Big]-1.
	\end{equation}
We compute
	\begin{align}\label{eq:decoexp}
	\E[X_n(k)\mid \Fil_n(k-1)]&=p^2(m-Q(k-1))(n-D^+(k-1))-1\\
	&=p^2mn-1 - p^2 D^+(k-1)m+p^2Q(k-1)(n-D^+(k-1)).\nonumber
	\end{align}
We know that, independently of $k$,
	\begin{equation}\label{eq:drifltlambda}
	p^2mn-1 = \dfrac{(1+\lambda n^{-1/3})^2}{mn}mn-1=2\lambda n^{-1/3}+ \lambda^2 n^{-2/3}.
	\end{equation}
Next, from Lemma \ref{lem:concopp}, we obtain

	\begin{equation}\begin{split}\label{eq:driftopp}
	\max_{k\leq tn^{2/3}}p^2Q(k-1)(n-D^+(k-1))&\leq  \max_{k\leq tn^{2/3}} p^2n Q(k-1)\\
	&= p^2n tm^{1/2}n^{1/6}(1+\op(1))\\
	&= \Tp(n^{1/6}m^{-1/2})=\op(n^{-1/3}).
	\end{split}\end{equation}
What remains to prove, since $sn^{-1/3}=k/n$, is that 

	\begin{equation}\label{eq:driftdead}
	\max_{k\leq t n^{2/3}}\big|p^2 D^+(k-1)m-k/n\big|=\op(n^{-1/3}).
	\end{equation}
For the lower bound on $p^2 D^+(k-1)m$ we note that

	\begin{equation}
	D^+(k-1)=D(k-1)+A(k-1)\geq D(k-1)=k-1,
	\end{equation}
so that, almost surely, for all $k\leq tn^{2/3}$,
	\begin{equation}\begin{split}
	p^2 D^+(k-1)m-k/n & \geq (k-1) (p^2m- n^{-1})-n^{-1}\\&=(k-1)\Big(\frac{1+2\lambda n^{-1/3}+n^{-2/3}}{mn}m-n^{-1}\Big)-n^{-1}\\&=(k-1)\dfrac{2\lambda+n^{-1/3}}{n^{4/3}}-n^{-1}=o(n^{-1/3}).
	\end{split}\end{equation}
For the matching upper bound, using Lemma \ref{lem:supactive} and the fact that $D(k-1)=k-1$, we obtain that
	\begin{equation}
	\max_{k\leq tn^{2/3}}\dfrac{D^+(k-1)-k}{n^{2/3}}= \max_{k\leq tn^{2/3}} \dfrac{A(k-1)-1}{n^{2/3}}\pto 0,
	\end{equation}
so that

\begin{equation}\begin{split}
\max_{k \leq tn^{2/3}}\Big(p^2 D^+(k-1)m-n^{-1/3}s\Big)&\leq  (k-1) (p^2m- n^{-1})-n^{-1} +p^2m\max_{k\leq tn^{2/3}} A(k)\\&=\op(n^{-1/3})+\frac{1+o(1)}{n}\max_{k\leq tn^{2/3}} A(k)=\op(n^{-1/3}),
\end{split}\end{equation}
and \eqref{eq:driftdead} follows.
Substituting \eqref{eq:drifltlambda}, \eqref{eq:driftopp} and \eqref{eq:driftdead} into \eqref{eq:decoexp} we obtain  \eqref{lem:drift}.

From \eqref{lem:drift} we obtain, summing over $j$, by the uniform convergence,
	\begin{equation}
	n^{-1/3}\, \sup_{k \leq tn^{2/3}} \left| Y_n(k) - \frac{2\lambda k }{n^{1/3} }
	+ \dfrac{k^2}{2n} \right|\leq  n^{-1/3}tn^{2/3}\op(n^{-1/3}) \pto 0.
	\end{equation}

\qed
%For an upper bound we use second moment method. We know that
%
%\begin{equation}
%D^+(k)\preceq \sum_{j=1}^k Z_2(j),
%\end{equation}
%where $Z_2(j)$ are i.i.d. random variables with the same distribution as $Z_2$.
%From this we obtain
%
%\begin{equation}
%\E\Big[\sum_{j=1}^k  Z_1\Big]=k\E[Z_1]=kpm=k\sqrt{\dfrac{m}{n}}(1+o(1)),
%\end{equation}
\subsection{Linear variance condition}
\label{sec:variance}

The only condition left to prove is the linear variance condition. We do it in the following lemma:
\begin{lem}[Linear variance condition]\label{lem:vari} 
Consider the Doob's decomposition of the process $\big(S_n^\lambda (k)\big)_{k\geq 0}$ adapted to the two-step exploration process from Definition \ref{def:exploration}, starting from $v_0 \in \Ucal$, with $p= p_c (1+\lambda n^{-1/3})$ and $m=n^\alpha$ for some $\lambda \in \mathbb{R}, \alpha > 1$. Then as $n\to \infty$, for every $t \in (0,\infty)$,
	\begin{equation}
	n^{-2/3}\,L_n(tn^{2/3}) \pto t.
	\end{equation}
\end{lem}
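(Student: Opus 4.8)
The plan is to evaluate the one-step conditional variance $\Var(X_n(k)\mid\Fil_n(k-1))$ in closed form, show that it equals $1+\op(1)$ uniformly over $k\leq tn^{2/3}$, and then sum over $k$. By \eqref{eq:doulbebin}, conditionally on $\Fil_n(k-1)$ we may write $X_n(k)+1=\sum_{i=1}^{B_k}\xi_i$, where $B_k\overset{d}{=}\Bin(m-Q(k-1),p)$, the $\xi_i$ are i.i.d.\ with law $\Bin(n-D^+(k-1),p)$ and independent of $B_k$, and $Q(k-1)$, $D^+(k-1)$ are $\Fil_n(k-1)$-measurable. Writing $M_k:=m-Q(k-1)$ and $N_k:=n-D^+(k-1)$, the variance formula for a compound sum gives
\begin{equation}
\Var(X_n(k)\mid\Fil_n(k-1))=\E[B_k]\,\Var(\xi_1)+\Var(B_k)\,\E[\xi_1]^2=M_kN_kp^2(1-p)(1+N_kp).
\end{equation}

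Next I would show that $\sup_{k\leq tn^{2/3}}\big|\Var(X_n(k)\mid\Fil_n(k-1))-1\big|\pto0$ by controlling the four factors of
\begin{equation}
M_kN_kp^2(1-p)(1+N_kp)=(p^2mn)\cdot\frac{M_k}{m}\cdot\frac{N_k}{n}\cdot(1-p)(1+N_kp)
\end{equation}
separately. The deterministic factor satisfies $p^2mn=(1+\lambda n^{-1/3})^2=1+o(1)$. Since $N_kp\leq np=\sqrt{n/m}(1+o(1))=n^{(1-\alpha)/2}(1+o(1))\to0$ because $\alpha>1$, also $(1-p)(1+N_kp)=1+o(1)$, uniformly in $k$. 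For the depletion factors, Lemma~\ref{lem:concopp} together with the monotonicity of $Q$ gives $\max_{k\leq tn^{2/3}}Q(k-1)=\Op(n^{1/6}m^{1/2})$, hence $\max_{k\leq tn^{2/3}}Q(k-1)/m=\Op(n^{1/6-\alpha/2})=\op(1)$ since $\alpha>1/3$; and \eqref{eq:expupb} together with the monotonicity of $D^+$ gives $\max_{k\leq tn^{2/3}}D^+(k-1)=\Op(n^{2/3})$, hence $\max_{k\leq tn^{2/3}}D^+(k-1)/n=\Op(n^{-1/3})=\op(1)$. Combining these estimates yields $\sup_{k\leq tn^{2/3}}|(M_k/m)(N_k/n)-1|\pto0$, and hence the claimed uniform bound on the conditional variance.

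To conclude, recall $L_n(tn^{2/3})=\sum_{j=1}^{\lfloor tn^{2/3}\rfloor}\Var(X_n(j)\mid\Fil_n(j-1))$, so that
\begin{equation}
\Big|n^{-2/3}L_n(tn^{2/3})-n^{-2/3}\lfloor tn^{2/3}\rfloor\Big|\leq n^{-2/3}\lfloor tn^{2/3}\rfloor\cdot\sup_{j\leq tn^{2/3}}\big|\Var(X_n(j)\mid\Fil_n(j-1))-1\big|\pto0,
\end{equation}
and since $n^{-2/3}\lfloor tn^{2/3}\rfloor\to t$ this gives $n^{-2/3}L_n(tn^{2/3})\pto t$. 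The only delicate point is that the $\op(1)$ corrections must hold uniformly in $k$ so that they survive summation over the $\Theta(n^{2/3})$ steps; this is exactly why one first needs the uniform concentration of $Q$ from Lemma~\ref{lem:concopp} and the uniform first-moment bound on $D^+$ from \eqref{eq:expupb}. It is also worth noting that the factor $1+N_kp$ is precisely where the hypothesis $\alpha>1$ enters: in the regime $\alpha<1$ this term diverges, which is the analytic reason behind the need to run the exploration from $\Wcal$ via Remark~\ref{rk:invert}.
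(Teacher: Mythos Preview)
Your proof is correct and follows essentially the same approach as the paper's. Both proofs show that $\Var(X_n(k)\mid\Fil_n(k-1))=1+\op(1)$ uniformly in $k\leq tn^{2/3}$ using the same ingredients: Lemma~\ref{lem:concopp} for the depletion of $\Wcal$, the bound on $D^+$ from \eqref{eq:expupb} (equivalently Lemma~\ref{lem:supactive}), and the hypothesis $\alpha>1$ to ensure $np\to 0$. The only cosmetic difference is that the paper applies the law of total variance by conditioning on $Q(j)$ and bounds the two resulting terms separately (showing $\Var(\E[\cdot\mid Q(j)])\leq p^3n^2m\to 0$ and $\E[\Var(\cdot\mid Q(j))]\to 1$), whereas you compute the compound-sum variance in one line as $M_kN_kp^2(1-p)(1+N_kp)$ and then factor. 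Your packaging is slightly cleaner and makes explicit that the factor $(1+N_kp)$ is exactly where $\alpha>1$ enters, which matches the paper's observation that the first term in its decomposition is $p^3n^2m=(p^2mn)(np)\to 0$.
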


\proof

From the Doob's decomposition of $(S_n^\lambda(k))_{k \geq 0}$ given in \eqref{eq:doob}  we recall that
	\begin{equation}
	L_n(k)=\sum_{j=1}^k \Var (X_n(j)\mid \Fil_n(j-1)).
	\end{equation}
We recall from Definition \ref{def:exploration} that we can write $X_n(j)+1$ as a binomial random variable with random parameter. Indeed,
	\begin{equation}\begin{split}
	X_n(j):&= | \Ball_2(v_j,\Opp(j-1)\cup \Dex (j-1))|-1
	\\&\overset{d}{=} \Bin \Big(\big(Q(j)-Q(j-1)\big)\big(n-D^+(j-1)\big),p\Big)-1.
	\end{split}\end{equation}
We can thus use the variance decomposition formula on the conditioning on $Q(j)$, to obtain
	\begin{equation}\begin{split}\label{eq:variancedecomposition}
	\Var (X_n(j)\mid \Fil_n(j-1))=& \Var(\E[X_n(j)\mid Q(j),\Fil_n(j-1)]\mid \Fil_n(j-1)])
	\\&+\E[\Var(X_n(j)\mid Q(j),\Fil_n(j-1))\mid \Fil_n(j-1))].
	\end{split}\end{equation}
We can thus bound

	\begin{equation}\begin{split}\label{eq:maxdeco}
	\max_{j\leq tn^{2/3}}&\big|\Var (X_n(j)\mid \Fil_n(j-1))-1\big|
	\\ \leq &\max_{j\leq tn^{2/3}}\Var(\E[X_n(j)\mid Q(j),\Fil_n(j-1)]\mid \Fil_n(j-1)])
	\\ &+\max_{j\leq tn^{2/3}}\big| \E[\Var(X_n(j)\mid Q(j),\Fil_n(j-1))\mid \Fil_n(j-1))]-1 \big|.
	\end{split}\end{equation}
We next compute that
	\begin{equation}
	\E[X_n(j)\mid Q(j), \Fil_n(j-1)]=p(n-D^+(j-1)) (Q(j)-Q(j-1))-1,
	\end{equation}
so that, since $Q(j-1)$ and $D^+(j-1)$ are both measurable with respect to $\Fil_n(j-1)$, and recalling that $(Q(j)-Q(j-1)\mid \Fil_n(j-1)\deq \Bin(m-Q(j-1),p)$, we obtain
	\begin{equation}\begin{split}\label{eq:varE}
	\max_{j\leq tn^{2/3}}&\Var(\E[X_n(j)\mid Q(j),\Fil_n(j-1)]\mid \Fil_n(j-1)])\\
	&=p^2\max_{j\leq tn^{2/3}}\Big\{\big(n-D^+(j-1)\big)^2\Var\big(Q(j)-Q(j-1)\mid \Fil_n(j-1)\big)\Big\}\\
	&\leq p^2 \max_{j\leq tn^{2/3}}\big(n-D^+(j-1)\big)^2 \max_{j\leq tn^{2/3}}p(1-p)(m-Q(j-1))\\
	&\leq p^3 n^2m  \to 0.
	\end{split}\end{equation}
For the second term on the right hand side of \eqref{eq:maxdeco} we compute
	\begin{equation}\label{eq:varup}\begin{split}
	\Var(X_n(j)&\mid Q(j),\Fil_n(j-1))\\
	&= \Var \big(\Bin \big((n-D^+(j-1))(Q(j)-Q(j-1)),p\big)\big)\\
	&=p(1-p)\big((n-D^+(j-1))(Q(j)-Q(j-1))\big),
	\end{split}\end{equation}
so we write, recalling again that $D^+(j-1)$ is measurable with respect to $\Fil_n(j-1)$,

	\begin{equation}\begin{split}
	\E[\Var(X_n(j)\mid & Q(j),\Fil_n(j-1))\mid \Fil_n(j-1))]\\&=p(1-p)(n-D^+(j-1))\E[Q(j)-Q(j-1)\mid\Fil_n(j-1)]\\&=p^2(1-o(1))\big(n-D^+(j-1)\big)\big(m-Q(j-1)\big).
	\end{split}\end{equation}
	As a result, for an upper bound,
	\begin{equation}
	\max_{j\leq tn^{2/3}}\E[\Var(X_n(j)\mid  Q(j),\Fil_n(j-1))\mid \Fil_n(j-1))] - 1\leq p^2mn(1-o(1))-1\to 0.
	\end{equation}
To derive the matching lower bound, we recall from Lemma \ref{lem:concopp} that $\max_{j\leq tn^{2/3}}Q(j)= \Tp (m^{1/2}n^{1/6})$ and from Lemma \ref{lem:supactive} that $\max_{j\leq tn^{2/3}}D^+(j)= \Tp (n^{2/3})$, so that

	\begin{equation}\begin{split}\label{eq:varlow}
	\min_{j\leq tn^{2/3}}&\E[\Var(X_n(j)\mid  Q(j),\Fil_n(j-1))\mid \Fil_n(j-1))] - 1 \\ &\geq p^2(1-o(1))(n- \Tp (n^{2/3}))(m-\Tp (m^{1/2}n^{1/6}) ) -1\pto 0.
	\end{split}\end{equation}
	We thus obtain, combining upper and lower bounds from \eqref{eq:varup} and \eqref{eq:varlow},
	\begin{equation}\label{eq:Evar}
	\max_{j\leq tn^{2/3}}\big|\E[\Var(X_n(j)\mid  Q(j),\Fil_n(j-1))\mid \Fil_n(j-1))] - 1\big|\pto 0.
	\end{equation}
Substituting the bounds from \eqref{eq:varE} and \eqref{eq:Evar} into \eqref{eq:maxdeco} we obtain
	\begin{equation}
	\max_{j\leq tn^{2/3}}\big|\Var(X_n(j)\mid \Fil_n(j-1)) - 1\big| \pto 0.
	\end{equation}
Summing over $j\leq tn^{2/3}$ the claim follows, by uniformity of the convergence. \qed
\medskip

We can now finally complete the proof of Theorem \ref{the:brownian}:

\begin{proof}[Proof of Theorem \ref{the:brownian}]
We apply Theorem \ref{thm:MFCLTcond} with $(S_n(k))_{k \geq 0}=(S_n^\lambda(k))_{k \geq 0}$ and $\tau =\lambda$.
Conditions (1) and (2) are satisfied by Lemma \ref{lem:12}, Condition (3) by Proposition \ref{lem:parabolic} and Condition (4) by Lemma \ref{lem:vari}. We can then conclude that

\begin{equation}
	\overline{S}^\lambda_n (s)=\big(n^{-1/3}S_n^\lambda( sn^{2/3})\big)_{s \geq 0}  \dto 
	\left(W(s) + 2\lambda s - \frac{s^2}{2}\right)_{s \geq 0}.
	\end{equation}
\end{proof}

\section{Scaling limit of number of edges in large critical components}
\label{sec:edge}
In this section we prove that in the largest connected components of the critical RIG $G(n,m,p)$, when $\alpha < 1$, the total number of edges is of higher order of magnitude than the number of vertices,  i.e., for every $j$,
	\begin{equation}
	|\cluster_j|=\op(|\Ed(\cluster_j)|).
	\end{equation}
 The reason is that if we assume that $\alpha <1$, $G(n,m,p)$ is built by planting cliques whose sizes are close to $\sqrt{n/m}$, each of which contains around $n/(2m)$ edges.

\bigskip

We now prove Proposition \ref{prop:surpll}.
%We show that in a large critical connected component actually the number of edges is of higher order of magnitude than the number of vertices, i.e., for all $j \in \mathbb{N}$
Because the edges in the RIG are generated on the base of how individuals are assigned to communities, we first prove that w.h.p. $G(n,m,p)$ does not contain unusually large communities. We recall that for each community $w \in \Wcal$, the number of its elements is given by
	\begin{equation}\label{eq:ballindi}
	\partial B_1(w)=\sum_{v\in \Ucal}\indi_{\{\{v,w\} \text{ is open}\}}\deq \Bin (n,p).
	\end{equation}

To prove that there are no exceptionally large communities we use results from standard concentration inequalities for binomial random variables:
\begin{thm}\cite[Theorem 2.21]{Hofs17}
Let $X\deq \Bin (n,p)$. Then for every $t>0$

	\begin{align}
%	\P(\E[S]-S \geq t)&\leq \exp\Big\{\dfrac{-t^2}{2\sum_{i=1}^m \E[X_i^2]}\Big\},	\label{eq:lowdev}\\
	\P(X-np \geq t)&\leq \exp\Big\{\dfrac{-t^2}{2np+2td/3}\Big\}\label{eq:updev}.
	\end{align}
\end{thm}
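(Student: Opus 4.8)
Although \eqref{eq:updev} is quoted from \cite{Hofs17}, I sketch the standard exponential-moment (Chernoff--Cram\'er) derivation, reading the exponent as its Bernstein form $-t^{2}/(2np+2t/3)=-t^{2}/(2(np+t/3))$. First I would write $X=\sum_{i=1}^{n}B_{i}$ with $(B_{i})_{i=1}^{n}$ i.i.d.\ $\Ber(p)$, fix $s>0$, and apply Markov's inequality to the nonnegative variable $\mathrm{e}^{sX}$, getting $\P(X-np\geq t)=\P(\mathrm{e}^{sX}\geq \mathrm{e}^{s(np+t)})\leq \mathrm{e}^{-s(np+t)}\E[\mathrm{e}^{sX}]$. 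By independence $\E[\mathrm{e}^{sX}]=(1-p+p\mathrm{e}^{s})^{n}=(1+p(\mathrm{e}^{s}-1))^{n}$, and the elementary bound $1+x\leq \mathrm{e}^{x}$ gives $\E[\mathrm{e}^{sX}]\leq \exp\{np(\mathrm{e}^{s}-1)\}$; hence
\[
\P(X-np\geq t)\leq \exp\{np(\mathrm{e}^{s}-1-s)-st\}\qquad\text{for every }s>0.
\]

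The second step is to optimize the exponent over $s$. Setting $\lambda:=t/(np)$ and taking the near-optimal choice $s=\log(1+\lambda)$ collapses the bound to $\exp\{-np\,\psi(\lambda)\}$, where $\psi(\lambda):=(1+\lambda)\log(1+\lambda)-\lambda$; this intermediate estimate is Bennett's inequality. The Bernstein form then follows from the purely analytic inequality
\[
\psi(\lambda)\ \geq\ \frac{\lambda^{2}}{2+2\lambda/3}\qquad(\lambda\geq 0),
\]
since substituting $\lambda=t/(np)$ makes $np\,\psi(\lambda)$ equal to exactly $t^{2}/(2np+2t/3)$. (Replacing $np$ by the true variance $np(1-p)\leq np$ in the moment-generating-function step would only sharpen every intermediate bound, so carrying $np$ throughout is harmless.)

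Finally, for the analytic inequality I would argue by repeated differentiation: put $f(\lambda):=\psi(\lambda)(2+2\lambda/3)-\lambda^{2}$, check directly that $f(0)=f'(0)=f''(0)=0$, and compute $f'''(\lambda)=\frac{4\lambda}{3(1+\lambda)^{2}}\geq 0$ for $\lambda\geq 0$; integrating this bound upward three times from $0$ yields $f\geq 0$ on $[0,\infty)$, which is exactly the claim. I do not expect a genuine obstacle — this is a classical Chernoff-type estimate — the only point needing care is carrying out the optimization over $s$ (equivalently, the elementary calculus for $f$) precisely enough that the constants $2np$ and $2t/3$ in the denominator come out exactly as written.
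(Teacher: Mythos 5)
The paper does not prove this statement; it quotes Theorem~2.21 of \cite{Hofs17} verbatim (including the stray $d$, which in the source is a bound on the centred increments and equals $1$ for $\Ber(p)$ summands, so the exponent should indeed be read as $-t^{2}/(2np+2t/3)$). Your sketch is therefore the right thing to check on its own, and it is correct: the Markov/MGF step, the choice $s=\log(1+\lambda)$ giving Bennett's bound $\exp\{-np\,\psi(\lambda)\}$ with $\psi(\lambda)=(1+\lambda)\log(1+\lambda)-\lambda$, and the reduction of the Bernstein form to $\psi(\lambda)\geq \lambda^{2}/(2+2\lambda/3)$ are all standard and correctly carried out. Your verification of the analytic inequality via $f(\lambda)=\psi(\lambda)(2+2\lambda/3)-\lambda^{2}$ also checks out: $f(0)=f'(0)=f''(0)=0$ and
\begin{equation}
f'''(\lambda)=\psi'''(\lambda)\Big(2+\tfrac{2\lambda}{3}\Big)+2\psi''(\lambda)
=\frac{-\big(2+\tfrac{2\lambda}{3}\big)+2(1+\lambda)}{(1+\lambda)^{2}}
=\frac{4\lambda}{3(1+\lambda)^{2}}\ \geq 0,
\end{equation}
so integrating three times gives $f\geq 0$ on $[0,\infty)$, exactly as you claim. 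One minor inaccuracy in your parenthetical: using the true variance $np(1-p)\leq np$ would give a \emph{stronger} bound, not a sharper version of the same intermediate bounds; but since the target denominator is $2np$, your use of $np$ as an upper bound for the variance is precisely what is needed, and the argument is harmless as stated.
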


We next prove Lemma \ref{lem:commsize}, i.e., that the graph does not contain communities that are significantly larger than the average:

\begin{proof}[Proof of Lemma \ref{lem:commsize}]
We only prove \eqref{eq:commsize1}, the proof of \eqref{eq:commsize2} is identical with the roles of $m$ and $n$ switched.
We know that $\partial B_1(w)\deq \Bin (n,p)$.
%We can thus prove the lower bound using \eqref{eq:lowdev} to compute
%	\begin{equation}\begin{split}
%	\P(\E[ B_1(w)]-  B_1(w)\geq \varepsilon np) &\leq \exp \Big\{- \dfrac{(\varepsilon np)^2}{2 \sum_{v\in \Ucal} \E[\indi_{\{(v,w) \text{ is open}\}}^2]}\Big\}
%	\\&= \exp\Big\{- \dfrac{\varepsilon^2n^2p^2}{2np}\Big\}=\exp\Big\{-\dfrac{\varepsilon^2np}{2}\Big\}.
%	\end{split}\end{equation}
%We know that
%
%	\begin{equation}\begin{split}
%	\P\Big(\min_{w\in\Wcal} \Big(\dfrac{\partial B_1(w)}{np}-1\Big)\leq -\varepsilon\Big)&\leq n\P(\E[ B_1(w)]- B_1(w) \geq \varepsilon np) \geq \varepsilon np)
%	\\ &\leq m\exp\Big\{-\dfrac{\varepsilon^2np}{2}\Big\} \to 0.
%	\end{split}\end{equation}
We use \eqref{eq:updev} with $t=\varepsilon n p$ for some $\varepsilon >0$ small enough,

	\begin{equation}\begin{split}
	\P(  \partial B_1(w)-np\geq \varepsilon np) &\leq \exp\Big\{-\dfrac{-(\varepsilon np)^2}{2np+2\varepsilon np/3}\Big\}
	\leq \exp\Big\{-\dfrac{3\varepsilon^2np}{8}\Big\}.
	\end{split}\end{equation}
Thus, we write
	\begin{equation}\begin{split}
	\P\Big(\max_{w\in\Wcal} \Big\{\dfrac{\partial B_1(w)}{np}-1\Big\}\geq \varepsilon\Big)&\leq m\P(\partial B_1(w)-np \geq \varepsilon np)\\&\leq m\exp\Big\{-\dfrac{3\varepsilon^2np}{8}\Big\} \to 0.
	\end{split}\end{equation}
\end{proof}

This result is important since when we run the two-steps exploration on $K_p(n,m)$ starting from $v_0 \in \Wcal$, as we do to study $G(n,m,p)$ when $\alpha<1$, the process explores one community at every step, so from the sizes of the communities explored in the first $k$ steps we can bound the number of edges explored.

%\begin{prop}\label{prop:surpll}
%Consider the two-steps exploration process on $K_p(m,\overline{m})$ with $\overline{m}=m^\alpha$, $\alpha >1$, and $p=p_c(1+\lambda m^{-1/3})$. Define the process $\big(E (k)\big)_{k \geq 1}$ as the process that counts the number of edges in the corresponding RIG $G(n,m,p)=G(n,m,p)$ found by the exploration. Then for every  $T>0$
%
%	\begin{equation}
%	\sup_{t\leq T}\Big| \dfrac{E(tn^{2/3})}{tnm^{-1/3}}-\dfrac{1}{2}\Big| \pto 0.
%	\end{equation}
%
%\end{prop}
\begin{proof}[Proof of Proposition \ref{prop:surpll}]

We start noting that for every community $v \in \Wcal$, $ \Ball_1(v)$ induces a clique in the RIG, and that every edge of $G(n,m,p)$ must belong to at least one such clique. From this we can deduce that

	\begin{equation}
	E(k)\leq \sum_{j\leq k} \dfrac{\partial B_1(v_j)(\partial B_1(v_j)-1)}{2}.
	\end{equation}
Indeed, in this upper bound, we are ignoring the fact that the exploration might have already found some of the edges in the $j$-th community in the previous steps.
By Lemma \ref{lem:commsize},
	\begin{equation}
	\sum_{j\leq k}\dfrac{\partial B_1(v_j)(\partial B_1(v_j)-1)}{2} \leq k \dfrac{np(np-1)}{2}(1+\op(1))= \dfrac{kn}{2m}(1+\op(1)).
	\end{equation}
We thus obtain that

	\begin{equation}\label{eq:Eup}
	E(tm^{2/3})\leq \dfrac{tn}{2m^{1/3}}(1+\op(1)).
	\end{equation}

%We know that
%\begin{equation}
%\big(\partial B_1(v_j)\mid \Fil_n(j-1)\big) \preceq 1+ \Bin (n,p),
%\end{equation}
%since one individual of the community has to be found to explore it, and all the other individuals are part of the community corresponding to to $v_j$ independently with probability $p$.
%
For a lower bound instead, we note that all the edges added in the community corresponding to $v^k$ among vertices in $\Ucal\setminus \Opp(k-1)$, are in $\Ed(k)\setminus \Ed(k-1)$, since such vertices have not been found yet by the exploration process, so that none of the edges incident to them has been explored. Thus we write, recalling the definition of $\partial B_1(v_j, \Opp(j-1))$ from \eqref{eq:ballout},

	\begin{equation}\label{eq:lowedge}
	E(j)-E(j-1)\geq \dfrac{\partial B_1(v_j, \Opp(j-1))(\partial B_1(v_j,\Opp (j-1))-1)}{2},
	\end{equation} 
and we note that 
\begin{equation}
(\partial B_1(v_j, \Opp(j-1))\mid \Fil_n(j-1))\deq \Bin (n-\Opp(j-1),p).
\end{equation}
We define the random variable  $\underline{B}$ as
	\begin{equation}
	\underline{B}\deq\Bin(n-2tn^{1/2}m^{1/6},p).
	\end{equation}
Note that under the assumption that 
\begin{equation}\label{eq:oppdom}
Q(tm^{2/3}-1)\leq2tn^{1/2}m^{1/6},
\end{equation}
it holds, for every given $t \in (0,\infty)$,

\begin{equation}
(\partial B_1(v_j, \Opp(j-1))\mid \Fil_n(j-1))\succeq \underline{B}, \quad \forall j \leq tm^{2/3}.
\end{equation}
We then obtain that, by \eqref{eq:lowedge}, if \eqref{eq:oppdom} is satisfied,  which happens  w.h.p. by  Lemma \ref{lem:concopp}, then

\begin{equation}\label{eq:dominationwhp}
(E(j)-E(j-1)\mid \Fil_n(j-1))\succeq \dfrac{\underline{B}(\underline{B}-1)}{2}, \quad \forall j\leq tm^{2/3}.
\end{equation}
Thus, there exists a coupling between $(E(j))_{j \geq 1}$ and a sequence of i.i.d. random variables  $(\underline{B}(j))_{j \geq 1}$, each with distribution identical to $\underline{B}$, such that
	\begin{equation}\label{eq:dominationwhp}\begin{split}
	\P\Big(\exists j \leq  tm^{2/3}: E(j)-E(j-1)<&\dfrac{\underline{B}(j)(\underline{B}(j)-1)}{2}\Big) \\ &\leq \P( Q(tm^{2/3}-1)>2tn^{1/2}m^{1/6}) \to 0.
	\end{split}\end{equation}
%	\begin{equation}
%	E(k) \preceq \sum_{j\leq k} \dfrac{\underline{B}(j)(\underline{B}(j)-1)}{2} 
%	\end{equation}
%where $\big(\underline{B}(j)\big)_{j\geq 1}$ is a sequence of i.i.d. random variables with
%	\begin{equation}
%	\underline{B}(j):=\Bin(n-n^{2/3},p).
%	\end{equation}
We recall the formulas for the third and fourth moments of binomial random variables:
\begin{align}
\E[\Bin(n,p)^3]&=np(1-3p+3np+2p^2-3np^2+n^2p^2)	\\
\E[\Bin(n,p)^4]&=np(1-7p+7np+12p^2-18np^2+6n^2p^2-6p^3+11np^3-6n^2p^3+n^3p^3),	\nonumber
\end{align}
so that we can bound 
	\begin{align}
	\E[\underline{B}(j)(\underline{B}(j)-1)]&=(np)^2(1+o(1))=n/m(1+o(1)),\\
	\Var(\underline{B}(j)(\underline{B}(j)-1))&=6(np)^2(1+o(1))=6n/m(1+o(1)),
	\end{align}
so that, by the second moment method, for every $t \in (0,\infty)$, 
	\begin{equation}
	\sum_{j\leq tm^{2/3}} \dfrac{\underline{B}(j)(\underline{B}(j)-1)}{2}=\dfrac{tn}{2m^{1/3}}(1+\op(1)).
	\end{equation}
From \eqref{eq:dominationwhp} we know that, for every $\varepsilon,t >0$,

\begin{equation}\label{eq:Elow}\begin{split}
\limsup_{n \to \infty}\P\Big(E(tm^{2/3})&\leq \dfrac{(t-\varepsilon)n}{2m^{1/3}}\Big) \\ &\leq \limsup_{n \to \infty} \P\Big(\sum_{j\leq tm^{2/3}} \dfrac{\underline{B}(j)(\underline{B}(j)-1)}{2} \leq \dfrac{(t-\varepsilon)n}{2m^{1/3}}\Big)= 0.
\end{split}\end{equation}

Combining \eqref{eq:Eup} and \eqref{eq:Elow} we conclude that, for every $t\in (0,\infty)$,

	\begin{equation}
	\dfrac{E(tm^{2/3})m^{1/3}}{n }-\dfrac{t}{2}\pto 0.
	\end{equation}
We obtain the claimed uniform convergence from the fact that $(E(k))_{k \geq 0}$ is a non-decreasing process and the function $t \mapsto t/2$ is continuous.
\end{proof}

%\section{Proof of convergence in the $\ell^2$ topology}\label{sect:l2}
%
%In this section we prove that the convergences in distribution in \eqref{eq:alfageq}, \eqref{eq:alfamin} and \eqref{eq:bipa} hold in stronger topologies than just the product topology over $\mathbb R_+^\infty$. 
%
%\begin{proof}[Proof of  \eqref{eq:alfageq} in the $\ell^2_\searrow$-topology]
%
%
%\end{proof}
%
%
%\begin{proof}[Proof of  \eqref{eq:alfamin} in the $\ell^2_\searrow\times \ell^2$-topology]
%
%\end{proof}
%
%\begin{proof}[Proof of  \eqref{eq:bipa} in the $\ell^2_\searrow$-topology]
%
%\end{proof}

	\section*{Acknowledgments}

The work in this paper is supported by the Netherlands Organisation for Scientific Research 
(NWO) through Gravitation-grant NETWORKS-024.002.003 and by the European Research Council (ERC) through Starting Grant Random Graph, Geometry and Convergence 639046. 
%%%%%%%%%%% REFERENCES %%%%%%%%%%%%

\begin{small}
\bibliographystyle{abbrv}
\bibliography{LorenzosBib}
\end{small}
%%%%%%%%%%%%%%%%%%%%%%%%%%%%%%%

\bigskip
\end{document}